\def\E{\ifmmode{\mathbb E}\else{$\mathbb E$}\fi} 
\def\N{\ifmmode{\mathbb N}\else{$\mathbb N$}\fi} 
\def\R{\ifmmode{\mathbb R}\else{$\mathbb R$}\fi} 
\def\Q{\ifmmode{\mathbb Q}\else{$\mathbb Q$}\fi} 
\def\C{\ifmmode{\mathbb C}\else{$\mathbb C$}\fi} 
\def\H{\ifmmode{\mathbb H}\else{$\mathbb H$}\fi} 
\def\Z{\ifmmode{\mathbb Z}\else{$\mathbb Z$}\fi} 
\def\P{\ifmmode{\mathbb P}\else{$\mathbb P$}\fi} 
\def\T{\ifmmode{\mathbb T}\else{$\mathbb T$}\fi} 
\def\SS{\ifmmode{\mathbb S}\else{$\mathbb S$}\fi} 
\def\DD{\ifmmode{\mathbb D}\else{$\mathbb D$}\fi} 
\newcommand{\del}{\partial}
\newcommand{\Cont}{{\operatorname{Cont}}}
\newcommand{\ben}{\begin{enumerate}}
\newcommand{\een}{\end{enumerate}}
\newcommand{\be}{\begin{equation}}
\newcommand{\ee}{\end{equation}}
\newcommand{\bea}{\begin{eqnarray}}
\newcommand{\eea}{\end{eqnarray}}
\newcommand{\beastar}{\begin{eqnarray*}}
\newcommand{\eeastar}{\end{eqnarray*}}
\newcommand{\bc}{\begin{center}}
\newcommand{\ec}{\end{center}}
\theoremstyle{theorem}
\newtheorem{thm}{Theorem}[section]
\newtheorem{cor}[thm]{Corollary}
\newtheorem{lem}[thm]{Lemma}
\newtheorem{prop}[thm]{Proposition}
\theoremstyle{definition}
\newtheorem{defn}[thm]{Definition}
\newtheorem{rem}[thm]{Remark}
\newtheorem{exm}[thm]{Example}
\newtheorem{hypo}[thm]{Hypothesis}
\newtheorem*{thm*}{Theorem}
\numberwithin{equation}{section}
\def\R{{\mathbb R}}
\def\Crit{{\hbox{Crit}}}
\def\E{{\mathbb E}}
\def\Z{{\mathbb Z}}
\def\C{{\mathbb C}}
\def\R{{\mathbb R}}
\def\P{{\mathbb P}}
\def\N{{\mathbb N}}
\def\11{{\mathbb I}}
\def\delbar{{\overline \partial}}
\def\C{\mathbb{C}}
\def\Z{\mathbb{Z}}
\def\T{\mathbb{T}}
\def\Q{\mathbb{Q}}
\def\E{\ifmmode{\mathbb E}\else{$\mathbb E$}\fi} 
\def\N{\ifmmode{\mathbb N}\else{$\mathbb N$}\fi} 
\def\R{\ifmmode{\mathbb R}\else{$\mathbb R$}\fi} 
\def\Q{\ifmmode{\mathbb Q}\else{$\mathbb Q$}\fi} 
\def\C{\ifmmode{\mathbb C}\else{$\mathbb C$}\fi} 
\def\H{\ifmmode{\mathbb H}\else{$\mathbb H$}\fi} 
\def\Z{\ifmmode{\mathbb Z}\else{$\mathbb Z$}\fi} 
\def\P{\ifmmode{\mathbb P}\else{$\mathbb P$}\fi} 
\def\SS{\ifmmode{\mathbb S}\else{$\mathbb S$}\fi} 
\def\DD{\ifmmode{\mathbb D}\else{$\mathbb D$}\fi} 
\def\R{{\mathbb R}}
\def\Crit{{\hbox{Crit}}}
\def\E{{\mathbb E}}
\def\Z{{\mathbb Z}}
\def\C{{\mathbb C}}
\def\R{{\mathbb R}}
\def\N{{\mathbb N}}
\def\delbar{{\overline \partial}}
\def\CA{{\mathcal A}}
\def\CH{{\mathcal H}}
\def\CL{{\mathcal L}}
\def\CP{{\mathcal P}}
\def\CP{{\mathcal P}}
\def\darr#1{\raise1.5ex\hbox{$\leftrightarrow$}
\mkern-16.5mu #1}
\def\roughly#1{\raise.3ex\hbox{$#1$\kern-.75em
\lower1ex\hbox{$\sim$}}}
\def\opname#1{\mathop{\kern0pt{\rm #1}}\nolimits}
\def\dim{\opname{dim}}
\def\Dev{\operatorname{Dev}}
\def\span{\operatorname{span}}
\def\Cont{\operatorname{Cont}}
\def\Crit{\operatorname{Crit}}
\def\Spec{\operatorname{Spec}}
\def\Sing{\operatorname{Sing}}
\def\GFQI{\frak{G}}
\def\Index{\operatorname{Index}}
\def\Image{\operatorname{Image}}
\def\Int{\operatorname{Int}}
\begin{document}

\quad \vskip1.375truein

\def\mq{\mathfrak{q}}
\def\mp{\mathfrak{p}}
\def\mH{\mathfrak{H}}
\def\mh{\mathfrak{h}}
\def\ma{\mathfrak{a}}
\def\ms{\mathfrak{s}}
\def\mm{\mathfrak{m}}
\def\mn{\mathfrak{n}}
\def\mz{\mathfrak{z}}
\def\mw{\mathfrak{w}}
\def\Hoch{{\tt Hoch}}
\def\mt{\mathfrak{t}}
\def\ml{\mathfrak{l}}
\def\mT{\mathfrak{T}}
\def\mL{\mathfrak{L}}
\def\mg{\mathfrak{g}}
\def\md{\mathfrak{d}}
\def\mr{\mathfrak{r}}
\def\Cont{\operatorname{Cont}}
\def\Crit{\operatorname{Crit}}
\def\Spec{\operatorname{Spec}}
\def\Sing{\operatorname{Sing}}
\def\GFQI{\text{\rm g.f.q.i.}}
\def\Index{\operatorname{Index}}
\def\Cross{\operatorname{Cross}}
\def\Ham{\operatorname{Ham}}
\def\Fix{\operatorname{Fix}}
\def\Graph{\operatorname{Graph}}

\title[Contact Hamiltonian dynamics and contact instantons]
{Contact Hamiltonian dynamics \\
and perturbed contact instantons \\
with Legendrian boundary
condition}
\author{Yong-Geun Oh}
\address{Center for Geometry and Physics, Institute for Basic Science (IBS),
77 Cheongam-ro, Nam-gu, Pohang-si, Gyeongsangbuk-do, Korea 790-784
\& POSTECH, Gyeongsangbuk-do, Korea}
\email{yongoh1@postech.ac.kr}
\thanks{This work is supported by the IBS project \# IBS-R003-D1}


\begin{abstract}
This is the first of a series of papers in preparation in which we study the Hamiltonian perturbed
contact instantons with Legendrian boundary condition and its applications.
In this paper, we establish nonlinear ellipticity of this boundary value
problem by proving the a priori elliptic coercive estimates for the
contact instantons with Legendrian boundary condition, and prove an asymptotic
exponential $C^\infty$-convergence result at a puncture under the uniform $C^1$ bound.
We prove that the asymptotic charge of contact instantons  at the punctures
\emph{under the Legendrian boundary condition} vanishes,
which eliminates the phenomenon of the appearance of
\emph{spiraling cusp instanton along a Reeb core}.
This removes the only remaining obstacle towards the compactification and the Fredholm
theory of the moduli space of contact instantons in the open string case, which plagues the
closed string case.

In sequels to the present paper, we study the $C^1$ estimates by defining a proper notion of energy
for the contact instantons, and develop a Fredholm theory and construct a Gromov-type
compactification of the moduli space of contact instantons  with Legendrian boundary
condition and of finite energy, and apply them to problems in contact topology and dynamics.
\end{abstract}

\keywords{Contact manifolds, Legendrian submanifolds, Contact instantons, Asymptotic contact charge,
contact triad connection}
\subjclass[2010]{Primary 53D42; Secondary 58J32}

\maketitle

\tableofcontents

\section{Introduction}

This is the first part of a series of papers in preparation in which we study the Hamiltonian perturbed
contact instantons with Legendrian boundary condition. The purpose of our study of this problem is two-fold.
The first one is to construct a Floer theoretic
construction of Legendrian spectral invariants on the one-jet bundle given by
Th\'eret in \cite{theret}, Bhupal \cite{bhupal} and Sandon \cite{sandon},
which are constructed using the generation functions quadratic at infinity ($\GFQI$).
This is the Legendrian version of Viterbo's $\GFQI$ spectral invariants
constructed in \cite{viterbo} for the Lagrangian submanifolds in the cotangent bundle.

Recall that the Floer theoretic construction of Viterbo's invariant is given
by the present author in \cite{oh:jdg,oh:cag}. The starting point of
this Floer theoretic construction was a remarkable
observation of Weinstein \cite{alan:observation} which reads that \emph{the classical action functional
is a generating function of
the time-one image $\phi_H^1(0_{T^*B})$ of the zero section $0_{T^*B}$ under the Hamiltonian flow of $H = H(t,x)$.}

Therefore the natural first step towards Floer theoretic construction of Legendrian spectral invariants
is to find a similar formulation
of the contact version of Weinstein's observation. More precisely, let
$$
\lambda = dz - pdq
$$
be the standard contact one-form on $J^1B$ and $R = \psi_H^1(0_{J^1B})$ is the
Legendrian submanifold which is the time-one image of the contact flow $\psi_H^t$
associated to the time-dependent function $H = H(t,y)$ with $y = (x,z) \in J^1B$.
We have found the contact counterpart of Weinstein's observation whose detailed
explanation will be given in a sequel \cite{oh-yso} to this paper.

Then towards the construction of a Legendrian counterpart of the construction in \cite{oh:jdg,oh:cag}
it turns out that one needs to generalize the notion of \emph{contact instantons}
introduced by Wang and present author \cite{oh-wang1,oh-wang3}. In a series of papers,
\cite{oh-wang2,oh-wang3} jointed with Wang and in \cite{oh:contacton}, the present author developed analysis of
contact Cauchy-Riemann maps \emph{without taking symplectization}. There is the phenomenon of the
\emph{appearance of spiraling contact instantons along the Reeb core}, even for finite
$\pi$-energy instantons, which is caused by
a puncture at which the asymptotic charge of a contact instanton is nonzero while the asymptotic
period is zero. The first main result of the present paper is a vanishing result of this charge
for the instanton \emph{with Legendrian boundary condition}, and
the asymptotic convergence theorem stated below in Theorem \ref{thm:subsequence-intro}.

Let $(\Sigma,j)$ be a compact Riemann surface with boundary and $\dot \Sigma$ a
punctured Riemann surface with a finite number of punctures which may be either interior
or boundary.

Then we extend the theory of contact instantons in two directions.
One is to establish the Legendrian boundary condition for the contact instanton equation
for a map $w: \dot \Sigma \to M$ with $\dot \Sigma$ from a
punctured bordered Riemann surface as an elliptic boundary condition
by establishing the a priori coercive elliptic estimates.
For a  $(k+1)$-tuple $(R_0,R_1, \cdots, R_k)$ of Legendrian submanifolds, we consider
the boundary value problem
\be\label{eq:contacton-Legendrian-bdy-intro}
\begin{cases}
\delbar^\pi w = 0, \quad d(w^*\lambda \circ j) = 0,\\
w(\overline{z_iz_{i+1}}) \subset R_i.
\end{cases}
\ee

The other is to introduce a Hamiltonian-perturbed contact instanton equation again as
an elliptic boundary value problem.
Consider a time-dependent function $H = H(t,y): \R \times M \to \R$. Denote by
$X_H$ the associated contact vector field. Then we have $\lambda(X_H) = -H$.
For each given coorientation preserving contact diffeomorphism $\psi$ of $(M,\xi)$, we have
$\psi^*\lambda = e^g \lambda$  for some function $g$ which we denote by $g=g_\psi$.

The following perturbed contact instanton equation is the contact counterpart of the celebrated Floer's Hamiltonian-perturbed
Cauchy-Riemann equation in symplectic geometry.

\begin{defn}\label{defn:contacton-Legendrian-bdy}
Let $(M,\lambda)$ be contact manifold equipped with a contact form, and
consider the (time-dependent) contact triad
$$
(M,\lambda, J), \quad J = \{J_t\}_{t \in [0,1]}.
$$
Let $H = H(t,x)$ be a time-dependent Hamiltonian.
We say $u: \R \times [0,1] \to M$ is a
\emph{$X_H$-perturbed Legendrian Floer trajectory} if it satisfies
\be\label{eq:perturbed-contacton-bdy-intro}
\begin{cases}
(du - X_H \otimes dt)^{\pi(0,1)} = 0, \quad d(e^{g_H(u)}(u^*\lambda + H\, dt)\circ j) = 0\\
u(\tau,0) \in R_0, \quad u(\tau,1) \in R_1
\end{cases}
\ee
where the function $g_H(u): \R\times [0,1] \to \R$ is defined by
\be\label{eq:gHu}
g_H(u)(t,x) := g_{((\psi_H^t (\psi_H^1)^{-1})^{-1}}(u(t,x)).
\ee
\end{defn}

The presence of the second equation in \eqref{eq:perturbed-contacton-bdy-intro} may
look somewhat mysterious but it turns out to be a natural
equation for the \emph{instanton} connecting two
contact Hamiltonian (Moore) trajectories: The contact Hamilton's equation can be
decomposed
\be\label{eq:equation-decompose-intro}
\dot x = X_H(t,x) \Longleftrightarrow
\begin{cases} (\dot x - X_H(t,x))^\pi = 0, \\
\gamma^*(\lambda + H\, dt) = 0,\\
\gamma(0) \in R_0, \, \gamma(1) \in R_1
\end{cases}
\ee
into the $\xi$-component and the Reeb component of the equation.
See Section \ref{sec:1st-variation} for more discussion on this.
\begin{rem} In physics literature (\cite{witten:morse} for example), the term \emph{instanton} is used to
mean a connecting gradient trajectory of two critical points of a function (or a functional). Regarding
 a contact Hamiltonian trajectory as a `critical point' of the contact action functional,
we would like to regard a contact instanton with two punctures as such a connecting
`gradient trajectory'.
\end{rem}

We can regard a contact Hamiltonian vector field $X_{-H}$ for a \emph{positive}
time-dependent Hamiltonian $H$ as the
time-dependent Reeb vector field of the time-dependent contact form $H^{-1}\lambda$ and then
\eqref{eq:perturbed-contacton-bdy-intro}
itself as the associated (unperturbed) time-dependent contact instanton equation.
In the present paper, we will focus on the analysis of unperturbed equation \eqref{eq:contacton-Legendrian-bdy-intro}
leaving necessary detailed discussion on the Hamiltonian-perturbed equation
\eqref{eq:perturbed-contacton-bdy-intro} and its application to \cite{oh:entanglement1}.
We also refer readers to \cite{oh:entanglement1} for the explanation on what
the choice of the conformal exponent function $g_H$ in \eqref{eq:gHu} is about.

As the first step towards the analytic study of the
above boundary value problem of the contact instanton,
we prove the following elliptic $W^{2,2}$-estimates in the general context of
maps $w: \dot \Sigma \to M$ for a Riemann surface
of genus zero with a finite number of boundary punctures.

Let $R_0, \ldots, R_k$ be a tuple of
Legendrian submanifolds and $\dot \Sigma = \Sigma \setminus\{z_0,\ldots z_k\}$
with $z_0,\ldots z_k \subset \del \Sigma$.
Let $w: \dot \Sigma \to M$ satisfy
\be\label{eq:contacton-Legendrian-bdy-intro}
\begin{cases}
\delbar^\pi w = 0 \, \quad d(w^*\lambda \circ j) = 0\\
w(z) \in R_i, \quad \text{\rm for } \, z \in \overline{z_iz_{i+1}}
\end{cases}
\ee

\begin{thm}\label{thm:local-W12-intro}
Let $w: \R \times [0,1] \to M$ satisfy \eqref{eq:contacton-Legendrian-bdy-intro}.
Then for any relatively compact domains $D_1$ and $D_2$ in
$\dot\Sigma$ such that $\overline{D_1}\subset D_2$, we have
$$
\|dw\|^2_{W^{1,2}(D_1)}\leq C_1 \|dw\|^2_{L^2(D_2)} + C_2 \|dw\|^4_{L^4(D_2)} + C_3 \|dw\|^3_{L^3(\del D_2)}
$$
where $C_1, \ C_2$ are some constants which
depend only on $D_1$, $D_2$ and $(M,\lambda, J)$ and $C_3$ is a
constant which also depends on $R_i$ as well.
\end{thm}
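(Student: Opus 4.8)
The plan is to derive the estimate by interpreting the contact instanton equations \eqref{eq:contacton-Legendrian-bdy-intro} as a first-order elliptic system to which standard $W^{2,2}$ elliptic estimates for the Cauchy--Riemann operator apply, with the nonlinearity and the Legendrian boundary condition contributing the higher-order terms on the right-hand side. First I would work in a coordinate chart, choosing an isothermal coordinate $z = s + it$ on $D_2$ and a Darboux-type coordinate system on the target so that the splitting $TM = \xi \oplus \R\, X_\lambda$, the almost complex structure $J|_\xi$, and the decomposition $dw = dw^\pi + (w^*\lambda)\, X_\lambda$ are all visible. The equation $\delbar^\pi w = 0$ says that the $\xi$-component $\zeta := dw^\pi$ of $dw$, viewed through a local trivialization of $w^*\xi$, satisfies $\delbar \zeta = (\text{lower order}) = R_0(w)(\zeta,\zeta) + \ldots$, i.e. a $\delbar$-equation with a quadratic-in-$dw$ inhomogeneous term coming from the connection/Christoffel symbols of the contact triad connection and from the failure of the trivialization to be holomorphic. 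The second equation $d(w^*\lambda\circ j)=0$ together with the obvious identity $d(w^*\lambda)$ expressed via $\delbar^\pi w=0$ controls the Reeb component $w^*\lambda =: a\, ds + b\, dt$: it forces $a,b$ to satisfy a $\delbar$-type system with inhomogeneity again quadratic in $dw$. Packaging $\zeta$ and $(a,b)$ into a single $\C^N$-valued map, one gets $\delbar W = Q(W, dW)$ with $Q$ pointwise quadratic in $dw = (\zeta, a, b)$, so $|Q| \le C|dw|^2$ pointwise.

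Next I would invoke the interior and boundary $L^p$-estimates for $\delbar$. For the interior part the classical Calderón--Zygmund estimate gives
$$
\|dw\|_{W^{1,2}(D_1)} \le C\bigl(\|\delbar W\|_{L^2(D_2)} + \|dw\|_{L^2(D_2)}\bigr) \le C\bigl(\||dw|^2\|_{L^2(D_2)} + \|dw\|_{L^2(D_2)}\bigr),
$$
and $\||dw|^2\|_{L^2(D_2)} = \|dw\|_{L^4(D_2)}^2$ produces the $C_2\|dw\|_{L^4(D_2)}^4$ term after squaring. For boundary punctures I would straighten a collar of $\del\Sigma$ to a half-disc $\H^+$ and use the fact that the Legendrian condition $w(z)\in R_i$ is a totally real boundary condition for the $\xi$-part: $J_t R_i \subset \xi$ is transverse to $T R_i \cap \xi$, so $\zeta$ along the boundary lies in a fixed totally real subspace (after trivialization), and the $\lambda$-part is constrained by $w^*\lambda|_{\del\Sigma}$ being determined by the tangency $\dot w \in TR_i$. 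One then applies the standard $W^{2,2}$ estimate for $\delbar$ with totally real boundary condition. The boundary condition is not exactly linear/constant-coefficient, so straightening it introduces a boundary inhomogeneity that is quadratic in the trace of $dw$ along $\del D_2$; estimating that term via the trace theorem, or more directly keeping it as a boundary integral, yields the $C_3\|dw\|_{L^3(\del D_2)}^3$ contribution (the cubic power and $L^3$ arising from squaring an $L^{3/2}(\del D_2)$ bound on the boundary inhomogeneity and applying the trace/Sobolev inequality on the one-dimensional boundary). The dependence of $C_3$ on the $R_i$ is exactly through the geometry of how these totally real subspaces sit and how much the boundary-straightening diffeomorphism distorts them.

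The main obstacle I anticipate is handling the boundary term cleanly: unlike the interior, where Calderón--Zygmund is off-the-shelf, the boundary estimate requires care that the Legendrian condition genuinely furnishes a totally real (elliptic) boundary condition for the full packaged system $W$, including the Reeb component $(a,b)$ — one must check that the tangency condition $\dot w(\tau,i)\in T_{w}R_i$ does not over- or under-determine $(a,b)$ on $\del\Sigma$ and that, combined with $d(w^*\lambda\circ j)=0$, it closes up into a Lopatinski--Shapiro-satisfying boundary value problem. A secondary technical point is bookkeeping the nonlinear remainder: one must verify that all the inhomogeneous terms really are \emph{quadratic} (not just bounded) in $dw$, which is where the specific structure of the contact triad connection from \cite{oh-wang1} and the identity $\lambda(X_\lambda) = 1$, $d\lambda(X_\lambda,\cdot)=0$ enter; any stray linear-in-$dw$ term would be harmless (absorbable into $C_1\|dw\|_{L^2}^2$ by Cauchy--Schwarz) but a zeroth-order inhomogeneity depending on $w$ itself would break the stated scaling-invariant form of the inequality, so I would double-check its absence. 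Once the linear elliptic estimates are in place and the quadratic structure is confirmed, assembling the three terms and choosing $D_1 \Subset D_2$ to absorb cutoff errors is routine.
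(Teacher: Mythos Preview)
Your approach is genuinely different from the paper's. The paper does \emph{not} package the system and invoke Calder\'on--Zygmund/totally-real boundary estimates; instead it runs a Bochner-type argument directly on the energy density $e=|dw|^2$. From \cite{oh-wang2} one has the pointwise inequality $|\nabla(dw)|^2 \le C_1|dw|^4 - 4K|dw|^2 - 2\Delta e$; multiplying by a cutoff $\chi^2$ and integrating, the only new feature beyond the interior case is the Stokes boundary term $\int_{\partial D}\chi^2(*de)$. The paper then shows (using $\delbar^\pi w=0$ and the Legendrian condition) that $\partial w/\partial t \perp TR_i$ on $\partial D$, computes $*de|_{\partial D} = -\partial e/\partial t$ explicitly, and bounds it pointwise by a constant times $|dw|^3$, the constant involving the second fundamental form $B$ of $R_i$ in the triad metric. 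This is exactly where the $C_3\|dw\|_{L^3(\partial D_2)}^3$ term and its $R_i$-dependence come from, and it explains why the exponent is cubic: the normal derivative of $|dw|^2$ produces $\langle B(\partial_\tau w,\partial_\tau w),\partial_t w\rangle$-type terms.

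Your plan is viable in outline, but the mechanism you give for the boundary term is the weak point. The claim that straightening the totally real boundary condition produces a ``boundary inhomogeneity quadratic in the trace of $dw$'' and that ``squaring an $L^{3/2}(\partial D_2)$ bound'' yields the cubic $L^3$ term is not dimensionally coherent (squaring an $L^{3/2}$ norm does not give a cubed $L^3$ norm), and standard $W^{2,2}$ estimates for $\delbar$ with totally real boundary do not naturally spit out an $L^3(\partial D)$ cubic term in this form. If you pursue your route you would more likely arrive at an estimate with the boundary contribution absorbed into an interior $L^4$ term via trace inequalities, which is fine but is not the inequality as stated. The paper's Bochner approach has the advantage that the boundary term, its cubic exponent, and its geometric dependence on $R_i$ (through $\|B\|_{C^0}$) all fall out of a single explicit computation, whereas your route hides this geometry inside the constants of an abstract elliptic estimate.
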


Starting from Theorem \ref{thm:local-W12-intro} and using the embedding $W^{2,2} \hookrightarrow C^{0,\alpha}$
with $0 < \alpha < 1/2$,
we also establish the following higher local $C^{k,\alpha}$-estimates on punctured surfaces $\dot \Sigma$
in terms of the $W^{2,2}$-norms with $\ell \leq k+1$.

\begin{thm}\label{thm:local-regularity-intro} Let $w$ satisfy \eqref{eq:contacton-Legendrian-bdy-intro}.
Then for any pair of domains $D_1 \subset D_2 \subset \dot \Sigma$ such that $\overline{D_1}\subset D_2$,
$$
\|dw\|_{C^{k,\alpha}} \leq C \|dw\|_{W^{1,2}(D_2)}
$$
where $C> 0$ depends on $J$, $\lambda$ and $D_1, \, D_2$ but independent of $w$.

In particular, any weak solution of \eqref{eq:contacton-Legendrian-bdy-intro} in
$W^{1,4}_{\text{\rm loc}}$ automatically becomes a classical solution.
\end{thm}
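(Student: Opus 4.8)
The plan is to bootstrap regularity from the elliptic estimate of Theorem~\ref{thm:local-W12-intro} by iterating it together with standard Sobolev embeddings and linear elliptic theory for the $\delbar$-type operator underlying \eqref{eq:contacton-Legendrian-bdy-intro}. First I would observe that, after choosing suitable coordinates and a frame for $\xi$ along $w$ (as in \cite{oh-wang1,oh-wang3}), the system \eqref{eq:contacton-Legendrian-bdy-intro} can be written schematically as $\delbar w = N(w)(dw, dw)$ in the interior, where $N$ is a smooth bundle map quadratic in $dw$, with the Legendrian boundary condition $w(\overline{z_iz_{i+1}})\subset R_i$ playing the role of a totally real boundary condition for this Cauchy--Riemann system. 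The heart of the argument is therefore the same as the classical elliptic bootstrap for pseudoholomorphic curves with totally real boundary (Nijenhuis--Woolf, McDuff--Salamon): once one knows $dw \in W^{1,2}_{\mathrm{loc}}$, Sobolev embedding in dimension two gives $dw \in L^p_{\mathrm{loc}}$ for all $p < \infty$, hence $w \in C^{1,\alpha}_{\mathrm{loc}}$ for all $\alpha < 1$, and then interior/boundary Schauder estimates for the linearized operator upgrade this to $C^{k,\alpha}$ for all $k$.

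The key steps, in order, would be: (1) Apply Theorem~\ref{thm:local-W12-intro} on nested domains $D_1 \subset D_2 \subset \dot\Sigma$ to conclude $dw \in W^{1,2}(D_1)$; since $\dim \dot\Sigma = 2$, the embedding $W^{1,2} \hookrightarrow L^p$ for every $p<\infty$ then gives $dw \in L^p_{\mathrm{loc}}$, and the trace theorem gives $dw|_{\del D_1} \in L^q$ for all $q < \infty$. (2) Freeze the coefficients: regard $\delbar w = f$ with $f = N(w)(dw,dw) \in L^{p/2}_{\mathrm{loc}}$ for all $p$, so $f \in L^r_{\mathrm{loc}}$ for all $r<\infty$; by the $L^r$-estimate for $\delbar$ with totally real boundary conditions on $R_i$, we get $w \in W^{2,r}_{\mathrm{loc}}$ for all $r$, hence $dw \in C^{0,\alpha}_{\mathrm{loc}}$ for all $\alpha<1$ via $W^{1,r}\hookrightarrow C^{0,\alpha}$. (3) Bootstrap: now $f = N(w)(dw,dw) \in C^{0,\alpha}_{\mathrm{loc}}$, so interior and boundary Schauder estimates give $w \in C^{2,\alpha}_{\mathrm{loc}}$; differentiating the equation, $d^k f$ is controlled by lower-order $C^{j,\alpha}$ norms of $w$, and an induction yields $w \in C^{k+1,\alpha}_{\mathrm{loc}}$ with the stated bound $\|dw\|_{C^{k,\alpha}(D_1)} \le C\|dw\|_{W^{1,2}(D_2)}$, absorbing all intermediate constants into $C = C(J,\lambda,D_1,D_2,R_i)$. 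The final sentence about weak $W^{1,4}_{\mathrm{loc}}$ solutions follows because $W^{1,4}$ in two variables already makes the quadratic nonlinearity $N(w)(dw,dw)$ lie in $L^2_{\mathrm{loc}}$, which is precisely the hypothesis needed to start the chain at step (2) (one first checks $W^{1,4}\Rightarrow W^{2,2}_{\mathrm{loc}}$ by the elliptic estimate, then runs the bootstrap).

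The main obstacle I anticipate is handling the boundary punctures and the boundary terms carefully, rather than the interior regularity, which is routine. Two points need attention: first, near the arcs $\overline{z_iz_{i+1}}$ one must set up the totally real boundary condition for the $\delbar^\pi$-operator correctly — the Legendrian condition constrains $w$ to $R_i$, and one needs that this is compatible with the $J_t$-structure so that the doubling/reflection trick (or the half-disc $L^p$ and Schauder estimates) applies; this is where the dependence of the constant on $R_i$ enters, and one should be mindful that the second equation $d(w^*\lambda\circ j)=0$ contributes a boundary constraint on the Reeb component that must be shown to be a well-posed (oblique or Neumann-type) condition. Second, the domains $D_1, D_2$ are relatively compact in the \emph{punctured} surface $\dot\Sigma$, so the estimates are genuinely local and stay away from the $z_i$; no uniformity up to the punctures is claimed here, which keeps the argument clean. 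I would also note that the cutoff functions needed to localize Theorem~\ref{thm:local-W12-intro} and the Schauder estimates introduce no new difficulty beyond bookkeeping. Once these boundary matters are in place, the iteration is a standard finite induction.
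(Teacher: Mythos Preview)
Your high-level strategy---bootstrap from the $W^{2,2}$ estimate via Sobolev embedding and Schauder theory---matches the paper's, but the reduction you propose in step~(2) does not work as stated and misses the key structural input. The contact instanton system is \emph{not} of the form $\delbar w = N(w)(dw,dw)$ for a Cauchy--Riemann operator on the full map $w$: the almost complex structure $J$ lives only on $\xi$, so there is no $\delbar$ acting on all of $w^*TM$, and the second equation $d(w^*\lambda\circ j)=0$ is a bulk closedness condition on a one-form, not part of a CR system. Correspondingly, a Legendrian $R_i$ is $n$-dimensional inside a $(2n{+}1)$-manifold, so it cannot serve as a totally real boundary for a full $\delbar$-problem; the doubling/reflection machinery for pseudoholomorphic curves with totally real boundary does not apply directly to $w$.

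What the paper does instead is decompose $dw$ into its $\xi$-component $\zeta = \pi\,\partial w/\partial x$ and a complex scalar $\alpha = \lambda(\partial w/\partial y) + \sqrt{-1}\,\lambda(\partial w/\partial x)$ encoding the Reeb part. In isothermal coordinates these satisfy a \emph{coupled} system: $\zeta$ solves a linear first-order CR-type equation $\nabla^\pi_x\zeta + J\nabla^\pi_y\zeta + (\text{coefficients involving }\alpha)\cdot\zeta = 0$ with the genuinely totally real boundary condition $\zeta|_{\partial D}\in TR_i$ (since $R_i$ is Lagrangian in $(\xi,d\lambda)$), while $\alpha$ solves the inhomogeneous Riemann--Hilbert problem $\delbar\alpha = \tfrac12|\zeta|^2$ with the \emph{real} boundary condition $\alpha|_{\partial D}\in\R$ (coming from $\lambda(\partial w/\partial x)=0$ along the Legendrian). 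The bootstrap then \emph{alternates}: from $dw\in W^{1,2}$ one gets $\alpha\in C^{0,\delta}$ via the Riemann--Hilbert estimate, feeds this into the coefficients of the $\zeta$-equation to get $\zeta\in C^{1,\delta}$ by Schauder, substitutes back to improve $\alpha$, and so on. Your proposal would become correct if you replaced the single schematic equation by this two-component system and ran the alternating argument; the point you flag about the Reeb component needing a separate well-posed boundary condition is exactly where this decomposition enters.
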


Next we study the asymptotic convergence result of contact instantons
of finite $E^\pi$-energy with Legendrian pair $(R_0,R_1)$ near the punctures of
a Riemann surface $\dot \Sigma$.

Let $\dot\Sigma$ be a punctured Riemann surface with punctures
$\{p^+_i\}_{i=1, \cdots, l^+}\cup \{p^-_j\}_{j=1, \cdots, l^-}$ equipped
with a metric $h$ with \emph{cylindrical ends} outside a compact subset $K_\Sigma$.
Let
$w: \dot \Sigma \to M$ be any smooth map. As in \cite{oh-wang2}, we define the total $\pi$-harmonic energy $E^\pi(w)$
by
\be\label{eq:endenergy}
E^\pi(w) = E^\pi_{(\lambda,J;\dot\Sigma,h)}(w) = \frac{1}{2} \int_{\dot \Sigma} |d^\pi w|^2
\ee
where the norm is taken in terms of the given metric $h$ on $\dot \Sigma$ and the triad metric on $M$.

We put the following hypotheses in our asymptotic study of the finite
energy contact instanton maps $w$ as in \cite{oh-wang2}:
\begin{hypo}\label{hypo:basic-intro}
Let $h$ be the metric on $\dot \Sigma$ given above.
Assume $w:\dot\Sigma\to M$ satisfies the contact instanton equations \eqref{eq:contacton-Legendrian-bdy-intro},
and
\begin{enumerate}
\item $E^\pi_{(\lambda,J;\dot\Sigma,h)}(w)<\infty$ (finite $\pi$-energy);
\item $\|d w\|_{C^0(\dot\Sigma)} <\infty$.
\item $\Image w \subset K \subset M$ for some compact set $K$.
\end{enumerate}
\end{hypo}
Let $w$ satisfy Hypothesis \ref{hypo:basic-intro}. We can associate two
natural asymptotic invariants at each puncture defined as
\bea
T & := & \frac{1}{2}\int_{[0,\infty) \times [0,1]} |d^\pi w|^2 + \int_{\{0\}\times [0,1]}(w|_{\{0\}\times [0,1]})^*\lambda\label{eq:TQ-T}\\
Q & : = & \int_{\{0\}\times [0,1]}((w|_{\{0\}\times })^*\lambda\circ j).\label{eq:TQ-Q}
\eea
(Here we only look at positive punctures. The case of negative punctures is similar.)
As in \cite{oh-wang2}, we call $T$ the \emph{asymptotic contact action}
and $Q$ the \emph{asymptotic contact charge} of the contact instanton $w$ at the given puncture.

\begin{thm}[Vanishing Charge and Subsequence Convergence]\label{thm:subsequence-intro}
Let $w:[0, \infty)\times [0,1]\to M$ satisfy the contact instanton equations \eqref{eq:contacton-Legendrian-bdy-intro}
and Hypothesis \ref{hypo:basic-intro}.
Then for any sequence $s_k\to \infty$, there exists a subsequence, still denoted by $s_k$, and a
massless instanton $w_\infty(\tau,t)$ (i.e., $E^\pi(w_\infty) = 0$)
on the cylinder $\R \times [0,1]$  such that
$$
\lim_{k\to \infty}w(s_k + \tau, t) = w_\infty(\tau,t)
$$
in the $C^l(K \times [0,1], M)$ sense for any $l$, where $K\subset [0,\infty)$ is an arbitrary compact set.
Furthermore, $w_\infty$ has $Q = 0$ and the formula $w_\infty(\tau,t)= \gamma(T\, t)$  with
asymptotic action $T$, where $\gamma$ is some Reeb chord joining $R_0$ and $R_1$ of period $|T|$.
\end{thm}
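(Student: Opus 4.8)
The plan is to adapt the symplectization-free asymptotic analysis of \cite{oh-wang2} to the Legendrian boundary setting, with the crucial new input being the vanishing of the asymptotic charge $Q$. First I would fix the sequence $s_k\to\infty$ and consider the translated maps $w_k(\tau,t):=w(s_k+\tau,t)$, each defined on $[-s_k,\infty)\times[0,1]$ and satisfying the same equation \eqref{eq:contacton-Legendrian-bdy-intro} with the same Legendrian boundary conditions $w_k(\tau,0)\in R_0$, $w_k(\tau,1)\in R_1$. By Hypothesis \ref{hypo:basic-intro}(2), the family $\{dw_k\}$ is uniformly $C^0$-bounded, and by (3) the images lie in a fixed compact set. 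Then Theorem \ref{thm:local-regularity-intro} promotes the uniform $C^0$ bound on $dw_k$ to uniform $C^{k,\alpha}_{\mathrm{loc}}$ bounds on each $w_k$ on compact subsets of $\R\times[0,1]$ (the boundary estimates of Theorem \ref{thm:local-W12-intro} and Theorem \ref{thm:local-regularity-intro} being exactly what permits this up to the boundary). By Arzel\`a--Ascoli and a diagonal argument we extract a subsequence, still written $s_k$, with $w_k\to w_\infty$ in $C^l_{\mathrm{loc}}(\R\times[0,1],M)$ for every $l$; the limit $w_\infty$ solves \eqref{eq:contacton-Legendrian-bdy-intro} on $\R\times[0,1]$ with $w_\infty(\tau,0)\in R_0$, $w_\infty(\tau,1)\in R_1$.

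Next I would show $E^\pi(w_\infty)=0$. The finite-energy hypothesis \ref{hypo:basic-intro}(1) gives $\int_{[s,\infty)\times[0,1]}|d^\pi w|^2\to 0$ as $s\to\infty$; hence for any fixed compact $K\subset\R$, $\int_{K\times[0,1]}|d^\pi w_k|^2=\int_{(s_k+K)\times[0,1]}|d^\pi w|^2\to 0$, and by $C^0_{\mathrm{loc}}$-convergence of $d^\pi w_k$ this forces $d^\pi w_\infty\equiv 0$ on $\R\times[0,1]$, i.e. $w_\infty$ is massless. The structural consequence of $d^\pi w_\infty=0$ together with the second equation $d(w_\infty^*\lambda\circ j)=0$ is that $w_\infty^*\lambda$ is harmonic; writing $w_\infty^*\lambda = a\,d\tau + b\,dt$, harmonicity plus $d^\pi w_\infty=0$ (which already makes $\partial_\tau w_\infty$ and $\partial_t w_\infty$ proportional to the Reeb vector field) shows $a,b$ are harmonic functions on the strip. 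Here the Legendrian boundary condition enters decisively: along $\tau\mapsto w_\infty(\tau,0)\in R_0$ the tangent vector lies in $TR_0$, but $TR_0\subset\xi$ since $R_0$ is Legendrian, so $\partial_\tau w_\infty(\tau,0)$ has no Reeb component, forcing $a(\tau,0)=0$; similarly $a(\tau,1)=0$. A harmonic function on $\R\times[0,1]$ vanishing on both boundary components and bounded (by the uniform $dw$ bound) must vanish identically, so $a\equiv 0$; then the Cauchy--Riemann-type relation between $a$ and $b$ coming from $d(w_\infty^*\lambda\circ j)=0$ forces $b$ to be a constant, which I will call $T$. This identifies $w_\infty(\tau,t)=\gamma(Tt)$ for a Reeb trajectory $\gamma$ with $\gamma(0)\in R_0$, $\gamma(1)\in R_1$, i.e. a Reeb chord of period $|T|$, and it gives $Q=\int_{\{0\}\times[0,1]}(w_\infty^*\lambda\circ j)=\int_0^1 a\,dt = 0$ (using also that $Q$ is independent of the slice, being the integral of the closed form $w^*\lambda\circ j$, and that it is continuous under the convergence).

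The main obstacle I anticipate is the boundary regularity bookkeeping needed to justify the $C^l_{\mathrm{loc}}$ convergence \emph{up to the boundary} — one must check that the constants $C_1,C_2,C_3$ in Theorem \ref{thm:local-W12-intro} are uniform over the translated domains $(s_k+K)\times[0,1]$, which is true because the estimate is translation-invariant in the cylindrical variable and the Legendrian submanifolds $R_0,R_1$ are fixed, but this requires care in setting up the covering of the strip by coordinate charts adapted to the boundary. A secondary technical point is rigorously extracting the strong $L^2$-convergence of $d^\pi w_k$ from the $C^0_{\mathrm{loc}}$ bounds so that the energy genuinely passes to the limit as zero rather than merely being lower semicontinuous; this follows from dominated convergence once the pointwise convergence and the uniform bound are in hand. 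Everything downstream of masslessness is the same linear-algebra-plus-harmonic-functions argument as in \cite{oh-wang2}, the only new feature being that the Legendrian boundary condition supplies the Dirichlet condition $a=0$ on $\partial(\R\times[0,1])$ that in the closed-string case is unavailable and is precisely what can fail, producing the spiraling-along-a-Reeb-core phenomenon; here it forces $Q=0$ unconditionally.
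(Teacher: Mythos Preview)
Your proposal is correct and follows essentially the same route as the paper's proof: extract a $C^l_{\mathrm{loc}}$-convergent subsequence of translates via the a priori boundary estimates, pass to a massless limit, and then use that the Legendrian boundary condition forces the Dirichlet condition $a(\tau,0)=a(\tau,1)=0$ on the $d\tau$-coefficient of the bounded harmonic one-form $w_\infty^*\lambda$, whence $a\equiv 0$, $b\equiv T$ constant, $Q=0$, and $w_\infty(\tau,t)=\gamma(Tt)$. The two technical concerns you flag (translation-invariance of the estimates along the strip and passage of the $\pi$-energy to the limit) are handled exactly as you indicate and pose no real difficulty.
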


\begin{rem}\label{rem:big-difference-intro}
The vanishing $Q = 0$ of the asymptotic charge of
contact instanton $w$ with (nonempty) Legendrian boundaries is a big deviation from
the closed string case studied in \cite{oh-wang2,oh-wang3,oh:contacton}.
This vanishing removes the only remaining obstruction, the so called the \emph{appearance of spiraling instantons along the
Reeb core}, to a Fredholm theory and to a compactification of the moduli space of contact instantons.
This obstruction has been the main reason why the paper \cite{oh:contacton} has not been released in public
but being held in the author's homepage until recently since 2013.
\end{rem}

The coercive elliptic regularity estimates and the vanishing theorem of asymptotic charge in the present
work will be the foundation of the analytic package of Gromov-Floer-Hofer compactification and
the Fredholm theory of Hamiltonian-perturbed contact instantons with Legendrian boundary condition and
its applications. In \cite{oh:entanglement1} the author developed this analytic package for the
study of the moduli space of contact instantons perturbed by a (parameterized) cut-off Hamiltonian
of the type used in \cite{oh:mrl}. Using the analytic package,
we prove a conjecture of Shelukhin \cite{shelukhin:contactomorphism}, which concerns existence of translated points of
contactomorphisms \cite{sandon:translated}, as a consequence of its Legendrianization,
a general existence result of Reeb chords between the pair $(R,\psi(R))$ for any
compact Legendrian submanifolds.

In a sequel \cite{oh:perturbed-contacton-bdy}, we will develop the coercive
estimates for the perturbed contact instantons extending the current estimates
to the case of with non-zero Hamiltonians, and a relevant Fredholm theory elsewhere
by adapting the one from \cite{oh:contacton}
to the current case of contact instantons with boundary.
In other sequels, we will also apply these for the construction of
the Fukaya-type category on contact manifolds whose objects are Legendrian submanifolds
and morphisms and products which will be constructed by counting appropriate contact
instantons satisfying some prescribed asymptotic conditions and Legendrian boundary conditions.
 It will be interesting to see if Chekanov-Eliashberg-type DGA can
be constructed through such a study, which we will investigate elsewhere in a future.

In another sequel with Yu \cite{oh-yso}, utilizing these background geometric preparation,
we will give the Floer theoretic construction of Legendrian spectral invariants and provide
some applications thereof.

Now is the organization of the paper in order. In Section \ref{sec:contact-Hamiltonian}, we provide
a systematic exposition on the basic contact Hamiltonian geometry which will be needed for
the a priori elliptic estimates. Majority of the results in this section
are widely known to experts but are scattered around in literature with various different sign conventions.
Partly to fix our sign conventions, we organize them in the way that will be useful for our tensorial calculations.
In Section \ref{sec:1st-variation}, we compute the first variation of the contact action functional
and explain the relationship between its critical point and the contact Hamiltonian trajectories.
 In Section \ref{sec:contact-instantons} and \ref{sec:Wk+22-estimates},
we establish the local $W^{2,2}$ and $C^{k,\alpha}$ estimates for $k \geq 1$ with $0< \alpha< 1/2$ respectively
for the contact instantons with Legendrian boundary conditions. In Section \ref{sec:subsequence-convergence}
we prove that the presence of Legendrian barrier forces the asymptotic charge to vanish and
that for any contact instantons with derivative bound there exists a sequence $\tau_j \to \infty$
for which  $u(\tau_j, t) \to \gamma(T t)$ uniformly as $j\to \infty$ for some $T > 0$
in the strip-like coordinates $(\tau,t)$ near each puncture of the domain $\dot \Sigma$ of the map $u$.
In Section \ref{sec:exponential-convergence}, we prove that when the Legendrian boundary $(R_0,R_1)$
near a puncture
are transversal in a suitable sense then the above convergence is exponentially fast. In
\ref{sec:future-works}, we apply the contact instanton equation to the one-jet bundle $J^1B$ and
rewrite the equation in terms of the product coordinates $w = (u,f)$ of $J^1B = T^*B \times \R$
with respect to \emph{$CR$ almost complex structure lifted from the cotangent bundle $T^*B$},
which gives rise to an interesting elliptic system.

We thank Rui Wang for her previous collaboration on the contact instantons with
the closed string case \cite{oh-wang1,oh-wang2,oh-wang3}, and for her useful comments on the preliminary version of the
present paper.

\bigskip

\noindent{\bf Convention:}

\medskip

\begin{itemize}
\item {(Contact Hamiltonian)} The contact Hamiltonian of a time-dependent contact vector field $X_t$ is
given by
$$
H: = - \lambda(X_t).
$$
We denote by $X_H$ the contact vector field whose associated contact Hamiltonian is given by $H = H(t,x)$.
This convention is consistent with that of \cite{bhupal,BCT,dMV} but is the negative to that of \cite[Appendix 4]{arnold}
and \cite{mueller-spaeth}.
\item The Hamiltonian vector field on symplectic manifold $(P, \omega)$ is defined by $X_H \rfloor \omega = dH$.
We denote by
$$
\psi_H: t\mapsto \psi_H^t
$$
its Hamiltonian flow.
\item Contact action functional denoted by $\CA_H$ is given by
$$
\CA_H(\gamma) =  \int \gamma^*(-\lambda - H dt) = -\int \gamma^*\lambda - \int_0^1 H(t,\gamma(t))\, dt.
$$
The choice of the negative sign in front of $\int\gamma^*\lambda$ is
to be consistent with the classical action functional on the cotangent bundle $T^*B$ which is given by
$$
\CA_H(\gamma) = \int \gamma^*\theta- \int_0^1 H(t,\gamma(t))\, dt
$$
where $\lambda = dz - \theta$ is the canonical contact form on $J^1B$.
\item We denote by $R_\lambda$ the Reeb vector field associated to $\lambda$. We denote by
$$
\phi_{R_\lambda}^t
$$
its flow.
\end{itemize}

\section{Some contact Hamiltonian geometry and dynamics}
\label{sec:contact-Hamiltonian}

In this section, we state some useful results concerning the contact Hamiltonian dynamics.
Partly to fix our sign conventions, we
organize them in the way that will be useful for us to systematically perform geometric calculations
that enter in our elliptic estimates for contact instantons in the spirit of \cite{oh-wang2,oh-wang3}.
This systematic treatment will be needed for the analytical study of Hamiltonian-perturbed contact instanton
equations \eqref{eq:contacton-Legendrian-bdy-intro}. For this reason, we here include the contact Hamiltonian
calculi more than what we need for the purpose of the present paper (the case with $H = -1$), but
develop them as the contact Hamiltonian calculi similarly as in the Hamiltonian calculi in the symplectic case.
These will be needed in the sequels as in \cite{oh-wang4:contacton-Legendrian-bdy},
where we develop the coercive elliptic estimates for the perturbed equation \eqref{eq:contacton-Legendrian-bdy-intro}, and also applications thereof to contact Hamiltonian dynamics in \cite{oh-yso} and in others.

We also refer readers to \cite{BCT}, \cite{dMV} for some expositions on contact Hamiltonian
calculus (\emph{especially \cite{dMV} with the same sign conventions as ours
in both the symplectic and the contact contexts}), which partially overlap with
the exposition of the present section.

\subsection{Conformal exponents of contact diffeomorphisms}

We begin with the definition of contact diffeomorphisms.
\begin{defn} Let $(M,\xi)$ be a contact manifold. A diffeomorphism $\psi: M \to M$ is called a
contact diffeomorphism if $d\psi(\xi) \subset \xi$.
Denote by $\Cont(M,\xi)$ (resp. $\Cont_0(M,\xi)$)
the set of contact diffeomorphisms (resp. the identity component thereof).
\end{defn}

When $(M,\xi)$ is coorientable, we can choose a contact form
$\lambda$ with $\xi = \ker \lambda$. In the present paper, we will always assume
$(M,\xi)$ is cooriented and all contact forms represent the given coorientation.

\begin{rem} When $(M,\xi)$ is not coorientable, a natural treatment of such a contact manifold
is to regard it as a Jacobi manifold $(M,L,\{\cdot, \})$ where $L \to M$ is the line bundle
$L = TM/\xi$ and $\{\cdot, \cdot\}$ is a Lie bracket on the set of sections $\Gamma(L)$
which is a first order differential operator on each entry. See \cite{kirillov}, \cite[Definition 2.1]{LOTV}
for more details, and Proposition \ref{prop:brackets} coming later in the present paper for
the case when $(M,\xi)$ is cooriented.
\end{rem}

For given contact form $\lambda$, a coorientation preserving diffeomorphism $\psi$ of $(M,\xi)$
is contact if and only if it satisfies
$$
\psi^*\lambda = e^g \lambda
$$
for some smooth function $g: M \to \R$, which depends on the choice of contact form $\lambda$ of $\xi$.
Unless said otherwise, we will always assume that $\psi$ is coorientation preserving without mentioning
from now on.

\begin{defn} For given coorientation preserving contact diffeomorphism $\psi$ of $(M,\xi)$ we call
the function $g$ appearing in $\psi^*\lambda = e^g \lambda$ the \emph{conformal exponent}
for $\psi$ and denote it by $g=g_\psi$.
\end{defn}

The following lemma is a straightforward consequence
of the identity $(\phi\psi)^*\lambda = \psi^*\phi^*\lambda$.

\begin{lem}\label{lem:coboundary} Let $\lambda$ be given and denote by $g_\psi$ the function $g$
appearing above associated to $\psi$. Then
\begin{enumerate}
\item $g_{\phi\psi} = g_\phi\circ \psi + g_\psi$ for any $\phi, \, \psi \in \Cont(M,\xi)$,
\item $g_{\psi^{-1}} = - g_\psi \circ \psi^{-1}$ for all $\psi \in \Cont(M,\xi)$.
\end{enumerate}
\end{lem}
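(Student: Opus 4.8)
The plan is to unwind both parts directly from the defining relation $\psi^*\lambda = e^{g_\psi}\lambda$ together with the contravariant functoriality of pullback, namely $(\phi\psi)^*\lambda = \psi^*(\phi^*\lambda)$ (recall our convention that $\phi\psi$ denotes $\phi\circ\psi$, so that pullback reverses the order of composition).

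For part (1), I would start from the right-hand side. Since $\phi^*\lambda = e^{g_\phi}\lambda$, applying $\psi^*$ and using that pullback commutes with multiplication by smooth functions gives
\[
(\phi\psi)^*\lambda = \psi^*\bigl(e^{g_\phi}\lambda\bigr) = (e^{g_\phi}\circ\psi)\,\psi^*\lambda = (e^{g_\phi}\circ\psi)\,e^{g_\psi}\,\lambda = e^{\,g_\phi\circ\psi + g_\psi}\,\lambda .
\]
On the other hand, by definition $(\phi\psi)^*\lambda = e^{g_{\phi\psi}}\lambda$. Since $\lambda$ is a contact form, it is nowhere vanishing, so the two expressions for $(\phi\psi)^*\lambda$ force $e^{g_{\phi\psi}} = e^{\,g_\phi\circ\psi + g_\psi}$ pointwise, and taking logarithms yields $g_{\phi\psi} = g_\phi\circ\psi + g_\psi$.

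For part (2), I would simply specialize (1) to $\phi = \psi^{-1}$. Then $\phi\psi = \mathrm{id}$, and since $\mathrm{id}^*\lambda = \lambda$ we have $g_{\mathrm{id}} = 0$; hence (1) gives $0 = g_{\psi^{-1}}\circ\psi + g_\psi$, i.e. $g_{\psi^{-1}}\circ\psi = -g_\psi$. Precomposing both sides with $\psi^{-1}$ gives $g_{\psi^{-1}} = -g_\psi\circ\psi^{-1}$, as claimed. There is essentially no obstacle here: the only points requiring a moment's care are the order-reversal in the composition rule for pullbacks and the fact that a contact form is nowhere zero, which is precisely what allows one to cancel $\lambda$ and pass to the scalar identity; no use of the nondegeneracy of $d\lambda$ beyond the already standing coorientation hypotheses is needed.
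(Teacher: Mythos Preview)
Your proof is correct and is exactly what the paper intends: it states that the lemma is ``a straightforward consequence of the identity $(\phi\psi)^*\lambda = \psi^*\phi^*\lambda$'' without giving further details, and your computation spells out precisely that straightforward consequence. There is nothing to add.
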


\begin{rem}\label{rem:coboundary} For each contact form $\lambda$ of the given contact manifold
$(Q,\xi)$, we consider the function $\varphi_\lambda: G:= \Cont(M,\xi) \to C^\infty(M)$,
defined by
\be\label{eq:varphi}
\varphi_\lambda(\psi) = g_\psi
\ee
if $\psi^*\lambda = e^{g_\psi}\lambda$. Regard it as a one-cochain in the group cohomology complex
$(C^1(G, C^\infty(M)), \delta)$ with the coboundary map $\delta: C^1(G, C^\infty(M)) \to C^2(G, C^\infty(M))$
on the right $\Z[G]$-module $C^1(G, C^\infty(M))$ with the right composition as the
action of $G$ on $C^1(G, C^\infty(M))$ $(\psi,g) \mapsto g\circ \psi$. Then the above lemma can be interpreted as
\be\label{eq:gphipsi}
g_{\phi\psi} = (\delta \varphi_\lambda)(\phi,\psi).
\ee
Furthermore for a different choice of contact form $\lambda'$ of the same contact structure $\xi$
in the same orientation class, we make $\lambda$-dependence on $g_\psi$ explicit by writing $g_\psi^\lambda$
and $g_\psi^{\lambda'}$. Furthermore we have $\lambda' = e^{h_{(\lambda'\lambda)}}\lambda$ for some
function $h_{(\lambda'\lambda)}$ depending on $\lambda, \, \lambda'$. Then a straightforward
calculation leads to $g_\psi^{\lambda'} = h_{\lambda'\lambda}\circ \psi + g_\psi^\lambda$. This itself
can be written as $\varphi_{\lambda'} - \varphi_{\lambda} = \delta h_{(\lambda'\lambda)}$ where
we regard $h_{(\lambda'\lambda)}$ as a zero-cochain in the group cohomology complex $C^*(G, C^\infty(M))$.
In other words, the two cochains $\varphi_{\lambda'},  \, \varphi_{\lambda}$ are cohomologus to each other.
\end{rem}

The following is an immediate corollary of Lemma \ref{lem:coboundary} which plays an important role in the study of
an invariance property of Legendrian spectral invariants of contact diffeomorphisms under the conjugation.
(See \cite{bhupal}, \cite{sandon}, for example.)

\begin{cor} Let $\lambda$ be any contact form of $(M,\xi)$ and consider the
conformal exponent map $\psi \mapsto g_\psi$ associated to $\lambda$. Then
\be\label{eq:phis*alpha}
g_{\phi\psi \phi^{-1}} = g_\phi \circ \psi \circ \phi^{-1} - g_\phi \circ \phi^{-1} + g_\psi \circ \phi^{-1}.
\ee
\end{cor}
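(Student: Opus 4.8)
The plan is to apply Lemma \ref{lem:coboundary} twice, peeling off the composition $\phi\psi\phi^{-1}$ from the outside in. First I would write $\phi\psi\phi^{-1} = (\phi\psi)\phi^{-1}$ and apply part (1) of the lemma with the pair $(\phi\psi, \phi^{-1})$, obtaining
$$
g_{\phi\psi\phi^{-1}} = g_{\phi\psi}\circ \phi^{-1} + g_{\phi^{-1}}.
$$
Next I would expand $g_{\phi\psi}$ using part (1) again, now with the pair $(\phi,\psi)$, giving $g_{\phi\psi} = g_\phi\circ\psi + g_\psi$; substituting and distributing the precomposition with $\phi^{-1}$ over the sum yields
$$
g_{\phi\psi\phi^{-1}} = g_\phi\circ\psi\circ\phi^{-1} + g_\psi\circ\phi^{-1} + g_{\phi^{-1}}.
$$
Finally I would rewrite the last term using part (2) of the lemma, $g_{\phi^{-1}} = -g_\phi\circ\phi^{-1}$, which produces exactly the claimed identity \eqref{eq:phis*alpha}.

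There is essentially no obstacle here; the only point requiring the tiniest bit of care is the bookkeeping of which maps get precomposed where, since the ``action'' in this cochain formalism is by \emph{right} composition and the group elements compose in the order dictated by $(\phi\psi)^*\lambda = \psi^*\phi^*\lambda$. One should double-check that $g_{\phi\psi}\circ\phi^{-1}$ correctly becomes $(g_\phi\circ\psi + g_\psi)\circ\phi^{-1} = g_\phi\circ\psi\circ\phi^{-1} + g_\psi\circ\phi^{-1}$, i.e.\ that precomposition is linear over pointwise addition of functions, which is immediate. Alternatively, one could derive the formula directly from the cocycle identity \eqref{eq:gphipsi} by expanding $(\delta\varphi_\lambda)(\phi\psi,\phi^{-1})$ and $(\delta\varphi_\lambda)(\phi,\psi)$, but the two-step application of Lemma \ref{lem:coboundary} is the cleanest route and needs no additional machinery.
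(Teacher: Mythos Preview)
Your proof is correct and is exactly the approach the paper intends: the corollary is stated as an immediate consequence of Lemma~\ref{lem:coboundary}, and your two applications of part~(1) followed by part~(2) carry this out verbatim.
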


Note that this formula is reduced to a simple one $g_{\phi\psi \phi^{-1}} = g_\psi \circ \phi^{-1}$ when
$\phi$ is a strict contactomorphism.
\subsection{Contact vector fields and their Hamiltonians}

\begin{defn} A vector field $X$ on $(M,\xi)$ is called \emph{contact} if $[X,\Gamma(\xi)] \subset \Gamma(\xi)$
where $\Gamma(\xi)$ is the space of sections of the vector bundle $\xi \to M$.
We denote by $\mathfrak X(M,\xi)$ the set of contact vector fields.
\end{defn}

When $\lambda$ is a contact form of $\xi$, it uniquely defines the Reeb vector field $R_\lambda$
by the defining condition
\be\label{eq:Rlambda}
R_\lambda \rfloor d\lambda = 0, \, \lambda(R_\lambda) = 1
\ee
and it admits a decomposition
\beastar
TM & = & \xi \oplus \R\langle R_\lambda \rangle\\
T^*M & = & (\R\langle R_\lambda \rangle)^\perp \oplus (\xi)^\perp
\eeastar
where $(\cdot)^\perp$ denotes the annihilator of $(\cdot)$.
Then the condition $[X,\Gamma(\xi)] \subset \Gamma(\xi)$ is
 equivalent to the condition that
there exists a smooth function $f: M \to \R$ such that
$$
\CL_X \lambda = f \lambda.
$$
\begin{defn} Let $\lambda$ be a contact form of $(M,\xi)$.
The associated function $H$ defined by
\be\label{eq:contact-Hamiltonian}
H = - \lambda(X)
\ee
is called the \emph{$\lambda$-contact Hamiltonian} of $X$. We also call $X$ the
\emph{$\lambda$-contact Hamiltonian vector field} associated to $H$.
\end{defn}
We alert readers that under our sign convention, the $\lambda$-Hamiltonian $H$ of the Reeb vector field $R_\lambda$
as a contact vector field becomes the constant function $H = -1$.

\begin{rem} Unlike the symplectic case, the vanishing locus of contact Hamiltonian $H$ has a
very natural geometric meaning: The locus is precisely the set of points at which the
contact vector field is tangent to the contact distribution $\xi$.
\end{rem}

Conversely, any smooth function $H$ associates a unique contact vector field
that satisfies the relationship spelled out in the above definition. We highlight
the fact that unlike the symplectic case, this correspondence is one-one with
no ambiguity of addition by constant. Existence and derivation of the formula of $X_H$ from $H$
is not as straightforward as that of Hamiltonian vector field in symplectic geometry.
Since we will mostly fix the contact form $\lambda$ in the present paper, we will simply
call $X_H$ a \emph{contact Hamiltonian vector field associated to $H$}
instead of $\lambda$-contact Hamiltonian vector field.

The following lemma is a special case of \cite[Lemma 2.1]{oh-wang3}. Since there is
a sign difference between the current paper
and \cite{oh-wang3} for the definitions of Hamiltonian vector fields and that of
contact Hamiltonian, we provide the full proof of the lemma for the readers'
convenience. Also see \cite[Section 2]{mueller-spaeth} for some relevant discussions.

We denote by $R_\lambda$ the Reeb vector field of $\lambda$.

\begin{prop} Let $H$ be any smooth function on $M$ equipped with contact form $\lambda$.
Then the contact Hamiltonian vector field $X_H$ is uniquely determined by
$H$, and satisfies the equation
\be\label{eq:dH}
dH = X_H \rfloor d\lambda  + R_\lambda[H]\, \lambda.
\ee
\end{prop}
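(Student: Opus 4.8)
The plan is to reduce the statement to the pointwise linear algebra of the splitting $TM = \xi \oplus \R\langle R_\lambda\rangle$, on which $d\lambda$ restricts to a nondegenerate skew form on $\xi$ and annihilates $R_\lambda$. I would first derive \eqref{eq:dH} as a \emph{necessary} consequence of $X_H$ being contact with $\lambda(X_H) = -H$ (which also yields uniqueness), and then run the computation backwards to produce $X_H$ and check that it is contact with the prescribed contact Hamiltonian.

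For the necessity and uniqueness step: suppose $X$ is contact with $-\lambda(X) = H$, so $\CL_X\lambda = f\lambda$ for some $f \in C^\infty(M)$. Cartan's formula together with $\lambda(X) = -H$ gives
\[
\CL_X\lambda = X \rfloor d\lambda + d(\lambda(X)) = X\rfloor d\lambda - dH,
\]
hence $X\rfloor d\lambda = dH + f\lambda$. Contracting with $R_\lambda$ and using $R_\lambda \rfloor d\lambda = 0$ and $\lambda(R_\lambda) = 1$ forces $0 = R_\lambda[H] + f$, i.e. $f = -R_\lambda[H]$, and substituting back gives exactly \eqref{eq:dH}. Uniqueness is then immediate: \eqref{eq:dH} determines the $\xi$-component of $X_H$ since $d\lambda|_\xi$ is nondegenerate, and $\lambda(X_H) = -H$ fixes the $R_\lambda$-component.

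For existence I would reverse this. Set $\beta := dH - R_\lambda[H]\,\lambda$; the key observation is that $\beta(R_\lambda) = R_\lambda[H] - R_\lambda[H] = 0$, so $\beta$ lies in the annihilator of $R_\lambda$, which the splitting identifies with $\xi^*$. Since $d\lambda|_\xi \colon \xi \to \xi^*$ is an isomorphism, there is a unique $Y \in \Gamma(\xi)$ with $Y \rfloor d\lambda = \beta$ on $\xi$; evaluating both $Y\rfloor d\lambda$ and $\beta$ on $R_\lambda$ shows they also agree there, so $Y\rfloor d\lambda = \beta$ on all of $TM$. Then $X_H := -H\,R_\lambda + Y$ satisfies $\lambda(X_H) = -H$ and $X_H \rfloor d\lambda = Y\rfloor d\lambda = \beta$ (again using $R_\lambda \rfloor d\lambda = 0$), and Cartan's formula gives $\CL_{X_H}\lambda = X_H \rfloor d\lambda + d(\lambda(X_H)) = \beta - dH = -R_\lambda[H]\,\lambda$, so $X_H$ is contact with contact Hamiltonian $H$, and \eqref{eq:dH} holds by construction.

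The whole argument is routine; the only point needing any care — and the one place where the contact condition (rather than mere coorientability) is used — is the verification that $\beta$ annihilates $R_\lambda$, since this is precisely what makes the inversion of $d\lambda|_\xi$ legitimate and leaves no freedom to add a multiple of $R_\lambda$ to $X_H$. This is the special case (general $H$, as opposed to the $H \equiv -1$ needed later in the paper) of \cite[Lemma 2.1]{oh-wang3}, reproved here to account for the opposite sign convention.
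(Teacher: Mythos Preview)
Your proof is correct and follows essentially the same route as the paper: both use Cartan's formula on $\CL_X\lambda$, the splitting $TM = \xi \oplus \R\langle R_\lambda\rangle$, and the nondegeneracy of $d\lambda|_\xi$ to pin down $X_H$, with the identification $f = -R_\lambda[H]$ as the key step (you obtain it by contracting $X\rfloor d\lambda = dH + f\lambda$ with $R_\lambda$, the paper via $(\CL_X\lambda)(R_\lambda) = -\lambda([X,R_\lambda])$, which amounts to the same thing). Your explicit existence verification is a welcome addition that the paper leaves largely implicit, but the overall argument is the same.
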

\begin{proof} Recall the contact form $\lambda$ provides a canonical splitting
$$
TM = \xi \oplus \span_\R\{R_\lambda\}
$$
for the Reeb vector field $R_\lambda$. Therefore we can decompose
$$
X = X^\pi + X^\perp
$$
where $X^\pi$ (resp. $X^\perp$) is the component of $\xi$ (resp. of $\span_\R\{R_\lambda\}$).

The relation $H = - \lambda(X)$ already uniquely specifies the $R_\lambda$-component $X^\perp$
by
\be\label{eq:X-perp}
X^\perp = -H \, R_\lambda
\ee
since $\lambda(X^\pi) = 0$.
It remains to determine $\xi$-component $X^\pi$.
Thanks to the nondegeneracy of $d\lambda$ on $\xi$,
it is enough to determine $X^\pi \rfloor d\lambda$.
For this, we first take the differential of $H = - \lambda(X)$ and apply Cartan's formula to obtain
$$
dH = X \rfloor d\lambda - \CL_X \lambda.
$$
Then we note $X \rfloor d\lambda = X^\pi \rfloor d\lambda$ and that
$
\CL_X \lambda
$
is uniquely determined from the contact property of $X$
$$
\CL_X \lambda = f \lambda
$$
where $f$ is uniquely determined by $H$ as follows.
\begin{lem}\label{cor:LXtlambda} Let $X$ be a contact vector field with $\CL_X \lambda = f \lambda$,
and let $H$ be the associated contact Hamiltonian. Then $f = -R_\lambda[H]$.
\end{lem}
\begin{proof} Using the defining condition \eqref{eq:Rlambda}
of the Reeb vector field $R_\lambda$ of $\lambda$, and the formula $\lambda(X) = -H$,
we obtain
\be\label{eq:f=CLX}
f = (\CL_X \lambda) (R_\lambda) = - \lambda([X,R_\lambda]) = \CL_{R_\lambda}(\lambda(X))= - R_\lambda[H].
\ee
\end{proof}

Then $X^\pi \rfloor d\lambda$ is given by
\be\label{eq:X-parallel}
X^\pi \rfloor d\lambda = dH + \CL_X \lambda = dH - R_\lambda[H]\lambda.
\ee
Finally it follows from
\eqref{eq:X-perp} and \eqref{eq:X-parallel}, the right hand sides of which are uniquely determined by $H$, that $X$ satisfies \eqref{eq:dH}
which finishes the proof.
\end{proof}

In fact, the above proof proves an explicit formula for $X_H$
\be\label{eq:XH}
X_H = (\widetilde{d\lambda})^{-1}(dH)^{\perp_{R_\lambda}} - H R_\lambda
\ee
with $(dH)^{\perp_{R_\lambda}} =  dH - R_\lambda[H]\lambda$
where $\widetilde{d\lambda}: \xi \to \xi^* \cong \{R_\lambda\}^\perp \subset T^*M$ is the natural isomorphism
$Y \mapsto Y \rfloor d\lambda$ followed by the isomorphism $\xi^* \cong \{R_\lambda\}^\perp$
induced by the splitting $TM \cong \xi \oplus \span\{R_\lambda\}$. Here $\{R_\lambda\}^\perp$ is the annihilator of
$R_\lambda$. Note that the one form
$dH - R_\lambda[H]\lambda$ is indeed contained in $\{R_\lambda\}^\perp$.
In the canonical coordinates $(q,p,z)$ on $\R^{2n+1}$, this expression is reduced to
the well-known coordinate formula for the contact Hamiltonian vector field below.
 (See \cite[Appendix 4]{arnold}, \cite[Lemma 2.1]{oh-wang3}, \cite[Lemma 4.1]{bhupal}, \cite[(3.6)]{BCT} for example.)

\begin{exm} Following \cite[Section 6]{LOTV}, we introduce a vector field
$$
\frac{D}{\del q_i}: = \frac{\del}{\del q_i} + p_i \frac{\del}{\del z}
$$
which are tangent to $\xi$. Together with $\frac{\del}{\del p_i}$, the set
$
\left\{\frac{D}{\del q_i}, \frac{\del}{\del p_i}\right\}
$
provides a frame of $\xi$ on the chart of a canonical coordinates $(q_i,p_i,z)$.
(In \cite{LOTV}, $\frac{D}{\del q_i}$ is denoted by $D_i$ and there is a sign
difference in the choice of canonical symplectic form on $T^*B$.)

Let $H:\R^{2n+1} \to \R$ be a smooth function on $\R^{2n+1}$.
Then the contact Hamiltonian vector field $X_t = X_{H_t}$ is given by
\bea\label{eq:Xt-1}
X_t & =& \sum_{i=1}^n \left(\frac{\del H}{\del p_i} \frac{D}{\del q_i} - \frac{D H}{\del q_i} \frac{\del}{\del p_i}\right)
- H \frac{\del}{\del z} \nonumber \\
& = & \sum_{i=1}^n \frac{\del H}{\del p_i} \frac{\del}{\del q_i} -
\left(\frac{\del H}{\del q_i} + p_i \frac{\del H}{\del z}\right)\frac{\del}{\del p_i}
+ \left(\left\langle p, \frac{\del H}{\del p}\right \rangle  - H\right)\frac{\del}{\del z}
\eea
\end{exm}

\begin{rem} \begin{enumerate}
\item In the formula \cite[Lemma 2.1]{oh-wang3} applied to one-form $dH$
has no negative sign in front of $R_\lambda[H] \lambda$ of \eqref{eq:dH}. This is because
here and in \cite{BCT} the contact Hamiltonian is defined by $H = -\lambda(X)$ while in \cite{oh-wang3},
it is defined by $\lambda(X)$.
\item The formula \eqref{eq:Xt-1} explicitly shows that the assignment $H \mapsto X_H$ involves
a differential operator of mixed order, a combination of orders zero and one, unlike
the symplectic case. This difference makes contact Hamiltonian geometry and dynamics
exhibit quite different phenomena  from the symplectic case. This will be seen even in the study of
 the associated Hamilton's equation and the contact instanton equation.
\end{enumerate}
\end{rem}

\subsection{Contact vector field as a Reeb vector field}

Let $(M,\xi)$ be a contact manifold. For each given contact form $\lambda$,
any smooth function $H$ gives rise to a contact vector field, temporarily denoted as
$$
X_H= X_H^\lambda
$$
to emphasize the $\lambda$-dependence of the expression $X_H$. For another cooriented contact form $\lambda' = f \lambda$
with $f >0$, one might want to express $X_H^{\lambda'}$ in terms of $X_H^\lambda$. For general function $H$,
the relationship between them is rather complicated and does not seem to deserve writing it down. But for the
or the $f\lambda$-Reeb vector field $R_{f\lambda}$, which corresponds to $H = -1$,
we have the following.

\begin{prop}\label{prop:XH=Xf} Let $f = f(x)$ be a positive function on $(M,\lambda)$. Consider its inverse
$G = - 1/f$ and consider a new contact form $f \lambda$. Then we have
$$
R_{f\lambda} = X_G^\lambda \left(= \frac{1}{f} R_{\lambda} + X_{-1/f}^\pi\right).
$$
\end{prop}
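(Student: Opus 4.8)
The plan is to verify the defining equations \eqref{eq:Rlambda} of the Reeb vector field directly for the candidate $X_G^\lambda$ with $G = -1/f$, using the explicit formula \eqref{eq:XH} (equivalently \eqref{eq:dH}) for the contact Hamiltonian vector field. From $\lambda(X_G^\lambda) = -G = 1/f$ we immediately get $(f\lambda)(X_G^\lambda) = 1$, which is the normalization half of the Reeb condition for $f\lambda$. So the work is entirely in checking $X_G^\lambda \rfloor d(f\lambda) = 0$.

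For that I would start from \eqref{eq:dH} applied to $H = G = -1/f$: $dG = X_G^\lambda \rfloor d\lambda + R_\lambda[G]\,\lambda$. Expanding $d(f\lambda) = df\wedge\lambda + f\,d\lambda$ and contracting with $X_G^\lambda$ gives
\[
X_G^\lambda \rfloor d(f\lambda) = (X_G^\lambda[f])\,\lambda - f\,df(\cdot)\big|_{X_G^\lambda}\text{-term} \cdots
\]
more carefully, $X_G^\lambda \rfloor (df\wedge\lambda) = (X_G^\lambda[f])\lambda - \lambda(X_G^\lambda)\,df = (X_G^\lambda[f])\lambda - (1/f)\,df$, and $X_G^\lambda \rfloor (f\,d\lambda) = f\big(dG - R_\lambda[G]\lambda\big)$. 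Now substitute $G = -1/f$, so $dG = (1/f^2)\,df$ and hence $f\,dG = (1/f)\,df$, which cancels the $-(1/f)df$ term from the wedge part. What remains is $\big(X_G^\lambda[f] - f\,R_\lambda[G]\big)\lambda$. Finally $f\,R_\lambda[G] = f\,R_\lambda[-1/f] = f\cdot(1/f^2)R_\lambda[f] = (1/f)R_\lambda[f]$, and from the splitting \eqref{eq:X-perp}, $X_G^\lambda = X_G^{\lambda,\pi} - G R_\lambda = X_G^{\lambda,\pi} + (1/f)R_\lambda$, so $X_G^\lambda[f] = X_G^{\lambda,\pi}[f] + (1/f)R_\lambda[f]$; the two $(1/f)R_\lambda[f]$ contributions cancel and we are left with $X_G^\lambda \rfloor d(f\lambda) = \big(X_G^{\lambda,\pi}[f]\big)\lambda$. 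This is not obviously zero, which signals that I have mishandled a sign or that one must instead contract with $d\lambda$ after restricting to $\xi$; the cleaner route is to observe that both sides of the claimed identity, being contact vector fields for $f\lambda$ with the same $f\lambda$-Hamiltonian (namely $-1$ by the displayed remark that the Reeb field has contact Hamiltonian $-1$), must coincide by the uniqueness clause of the Proposition preceding \eqref{eq:dH}. So the honest proof is: (i) show $X_G^\lambda$ is a contact vector field for the contact structure $\xi$ — it is, since it is a $\lambda$-contact Hamiltonian vector field and $\xi = \ker\lambda = \ker(f\lambda)$; (ii) compute its $f\lambda$-contact Hamiltonian, $-(f\lambda)(X_G^\lambda) = -f\cdot(-G) = -f/f\cdot\ldots$, wait, $= -f\lambda(X_G^\lambda) = -f(-G) = fG = f(-1/f) = -1$; (iii) invoke uniqueness to conclude $X_G^\lambda = R_{f\lambda}$.

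For the parenthetical formula, I would just unpack $X_G^\lambda$ via the decomposition $X_G^\lambda = X_G^{\lambda,\pi} - G R_\lambda = X_{-1/f}^{\lambda,\pi} + \frac1f R_\lambda$, which is exactly the displayed $\frac1f R_\lambda + X_{-1/f}^\pi$ (here $X_{-1/f}^\pi$ denotes the $\xi$-component, determined by \eqref{eq:X-parallel}: $X_{-1/f}^\pi \rfloor d\lambda = d(-1/f) - R_\lambda[-1/f]\lambda = \frac1{f^2}\big(df - R_\lambda[f]\lambda\big)$, i.e. $X_{-1/f}^\pi = \frac1{f^2}(\widetilde{d\lambda})^{-1}\big(df - R_\lambda[f]\lambda\big)$).

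The main obstacle is bookkeeping of signs — the paper uses the convention $H = -\lambda(X)$ with the accompanying $+R_\lambda[H]\lambda$ in \eqref{eq:dH}, opposite to \cite{oh-wang3}, so every intermediate term must be tracked against \eqref{eq:XH} rather than against memory. Once the uniqueness statement (contact vector field $\leftrightarrow$ contact Hamiltonian is a genuine bijection, no constant ambiguity) is used, the argument is a two-line computation of the $f\lambda$-Hamiltonian of $X_{-1/f}^\lambda$, and the only real content is the identity $-(f\lambda)(X_{-1/f}^\lambda) = -1 = $ the $f\lambda$-Hamiltonian of $R_{f\lambda}$.
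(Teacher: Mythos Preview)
Your final argument via uniqueness is correct and takes a genuinely different route from the paper. The paper proceeds by direct computation: it writes $R_{f\lambda} = \frac{1}{f}(R_\lambda + \eta)$ with $\eta \in \xi$, contracts with $d\lambda$, and from the identity $fR_{f\lambda}\rfloor d\lambda = -R_{f\lambda}\rfloor(df\wedge\lambda)$ first extracts $\eta[f]=0$ and then $R_{f\lambda}\rfloor d\lambda = d(-1/f) - R_\lambda[-1/f]\,\lambda$, which matches $X_{-1/f}^\pi \rfloor d\lambda$ by \eqref{eq:X-parallel}; comparing the $\xi$- and Reeb components separately finishes. You instead invoke the bijection between contact vector fields and their Hamiltonians, now relative to the contact form $f\lambda$: since $X_G^\lambda$ is contact for $\xi=\ker(f\lambda)$ and has $(f\lambda)$-Hamiltonian $-(f\lambda)(X_G^\lambda)=fG=-1$, uniqueness forces $X_G^\lambda = R_{f\lambda}$. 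This is shorter and more conceptual; the paper's hands-on computation has the minor advantage of making the intermediate identity $\eta[f]=0$ explicit.

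One remark on your abandoned first attempt: the leftover term $X_G^{\lambda,\pi}[f]$ does in fact vanish. Writing $Y = X_G^{\lambda,\pi}\in\xi$, equation \eqref{eq:X-parallel} gives $Y\rfloor d\lambda = \tfrac{1}{f^2}(df - R_\lambda[f]\,\lambda)$; evaluating both sides on $Y$ and using $d\lambda(Y,Y)=0$ together with $\lambda(Y)=0$ yields $df(Y)=0$. So your direct verification of $X_G^\lambda\rfloor d(f\lambda)=0$ was one line from closing, and that missing line is precisely the paper's $\eta[f]=0$ in disguise.
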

\begin{proof} For the notational simplicity, we write $X_G = X_G^\lambda$ omitting the
superscript $\lambda$ in the following discussion.

Let $X_G^\pi$ be the $\xi$-projection of $X_G$. Then we have
$$
X_G = X_G^\pi - G R_\lambda \in \xi \oplus \R\langle R_\lambda \rangle.
$$
We evaluate
$$
\lambda(R_{f\lambda}) = \frac1f (f\lambda) (R_{f\lambda}) = 1/f =-G
$$
which shows that $X_G$ and $R_{f\lambda}$ have the same Reeb component.
We can uniquely express $R_{f\lambda}$ as
$$
R_{f\lambda} = \frac1f(R_\lambda + \eta)
$$
for some $\eta \in \xi$. Then we compute
\beastar
fR_{f\lambda} \rfloor d\lambda
&=& R_{f\lambda}\rfloor d(f\lambda)-R_{f\lambda}\rfloor(df\wedge\lambda)\\
&=&-R_{f\lambda}\rfloor(df\wedge\lambda)\\
&=&-R_{f\lambda}[f]\, \lambda+\lambda(R_{f\lambda})\, df\\
&=&-\frac{1}{f}(R_\lambda+\eta)[f]\, \lambda+\frac{1}{f}\lambda(R_\lambda+\eta)\, df\\
&=&-\frac{1}{f}R_\lambda[f]\, \lambda-\frac{1}{f}\eta[f]\, \lambda+\frac{1}{f}df.
\eeastar
Take value of $R_\lambda$ for both sides, we derive that $\eta$ satisfies
$
\eta[f]=0.
$
Therefore we have
$$
R_{f\lambda} \rfloor d\lambda= \frac{df}{f^2} -\frac{1}{f^2} R_\lambda[f] \,\lambda = - d(1/f) + R_\lambda[1/f]\, \lambda
= - X_{1/f} \rfloor d\lambda.
$$
This proves $(X_{-1/f})^\pi = (R_{f\lambda})^\pi$. Combining this with $\lambda(X_{1/f}) = - R_{f\lambda}$,
we have finished the proof.
\end{proof}

In this regard, for a given contact manifold $(M,\xi)$ equipped with a contact form $\lambda$, Reeb vector fields for
different contact form $f \lambda$ correspond to the special $\lambda$-contact vector fields whose
Hamiltonian $G = -1/f$ is nowhere vanishing.

\subsection{Contact isotopy, conformal exponent and contact Hamiltonian}

Next let $\psi_t$ be a contact isotopy of $(M, \xi = \ker \lambda)$ with $\psi_t^*\lambda = e^{g_t}\lambda$
and let $X_t$ be the time-dependent vector field generating the isotopy. Let
$H: [0,1] \times \R^{2n+1} \to \R$ be the associated time-dependent contact Hamiltonian
$H_t = - \lambda(X_t)$. One natural question to ask is explicitly what the relationship
between the Hamiltonian $H = H(t,x)$ and the conformal exponent $g = g(t,x)$ of $\Psi$ is.

Similarly as in the symplectic case \cite{oh:hameo1}, we first name the parametric assignment $\psi_t \to -\lambda(X_t)$
of $\lambda$-Hamiltonians the \emph{$\lambda$-developing map} $\Dev_\lambda$. We will just call it the developing map
when there is no need to highlight the $\lambda$-dependence of the contact Hamiltonian.

\begin{defn}[Developing map]\label{defn:Dev-lambda} Let $T \in \R$  be given. Denote by
$$
\CP_0([0,T], \Cont(M,\xi))
$$
the set of contact isotopies $\Psi= \{\psi_t\}_{t \in [0,T]}$ with $\psi_0 = id$.
We define the $\lambda$-developing map
$$
\Dev_\lambda: \CP([0,T], \Cont(M,\xi)) \to C^\infty([0,T] \times M)
$$
by the timewise assignment of Hamiltonians
$$
\Dev_\lambda(\Psi) : = -\lambda(X), \quad \lambda(X)(t,x) := \lambda(X_t(x)).
$$
\end{defn}

We also state the following as a part of contact Hamiltonian calculus, whose proof is
a straightforward calculation.

\begin{lem}\label{lem:inverse-Hamiltonian} Let $\Psi = \{\psi_t\} \in \CP([0,T],\Cont(M,\xi))$ be a contact isotopy satisfying
$\psi_t^*\lambda = e^{g_t} \lambda$ with $g_\Psi: = g(t,x)$and generated by the vector field $X_t$ with
its contact Hamiltonian  $H(t,x) = H_t(x)$, i.e., with $\Dev_\lambda(\Psi) = H$.
\begin{enumerate}
\item Then the (timewise) inverse isotopy $\Psi^{-1}: = \{\psi_t^{-1}\}$ is generated
by the (time-dependent) contact Hamiltonian
\be\label{eq:inverse-Hamiltonian}
\Dev_\lambda(\Psi^{-1}) = - e^{-g_\Psi \circ \Psi} \Dev_\lambda(\Psi)
\ee
where the function $g_\Psi \circ \Psi$ is given by $(g_\Psi \circ \Psi)(t,x) : = g_\Psi(t,\psi_t(x))$.
\item If $\Psi' = \{\psi'_t\}$ is another contact isotopy with conformal exponent $g_{\Psi'} = \{g'_t\}$,
then the timewise product $\Psi' \Psi$ is generated by the Hamiltonian
\be\label{eq:product-Hamiltonian}
\Dev_\lambda(\Psi'\Psi) = \Dev_\lambda(\Psi') + e^{-g_\Psi \circ (\Psi')^{-1}} \Dev_\lambda(\Psi) \circ (\Psi')^{-1}.
\ee
\end{enumerate}
\end{lem}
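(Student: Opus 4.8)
The plan is to verify each identity by differentiating the defining relations $\psi_t^*\lambda = e^{g_t}\lambda$ and using the basic relation $H_t = -\lambda(X_t)$, together with the naturality of pullbacks under composition. For item (1), I would start from the identity $\psi_t^{-1}\circ\psi_t = \mathrm{id}$ and differentiate in $t$ to obtain the vector field $Y_t$ generating $\Psi^{-1} = \{\psi_t^{-1}\}$ in terms of $X_t$; the standard computation gives $Y_t = -(d\psi_t^{-1})(X_t\circ\psi_t^{-1})$, i.e. $Y_t = -(\psi_t^{-1})_*X_t$. Then I compute $\Dev_\lambda(\Psi^{-1})(t,\cdot) = -\lambda(Y_t)$. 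Using $\lambda\big((\psi_t^{-1})_*X_t\big) = \big((\psi_t^{-1})^*\lambda\big)(X_t)\circ\psi_t^{-1}$ — careful here: one must push the evaluation point around correctly — and the fact that $(\psi_t^{-1})^*\lambda = e^{g_{\psi_t^{-1}}}\lambda = e^{-g_t\circ\psi_t^{-1}}\lambda$ by Lemma \ref{lem:coboundary}(2), I get $-\lambda(Y_t) = e^{-g_t\circ\psi_t^{-1}}\lambda(X_t)\circ\psi_t^{-1}$. Since $\lambda(X_t) = -H_t$, this is $-e^{-g_t\circ\psi_t^{-1}}H_t\circ\psi_t^{-1}$. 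I then need to match this against the claimed right-hand side $-e^{-g_\Psi\circ\Psi}\,\Dev_\lambda(\Psi)$; this requires unwinding the notation in the statement: there, $\Dev_\lambda(\Psi^{-1})$ is read as a Hamiltonian for the isotopy $\Psi^{-1}$, whose flow at time $t$ is $\psi_t^{-1}$, so it is natural to express everything in the frame moving with $\psi_t^{-1}$, and the identity becomes $\Dev_\lambda(\Psi^{-1})(t,x) = -\big(e^{-g_\Psi}\,\Dev_\lambda(\Psi)\big)(t,\psi_t(x))$ — i.e.\ precomposing by $\psi_t$ converts the expression I derived into the stated one. I would write this out carefully to make the bookkeeping transparent.

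For item (2), I would use the same scheme applied to the product isotopy $\Psi'\Psi = \{\psi'_t\circ\psi_t\}$. Differentiating $t\mapsto \psi'_t(\psi_t(x))$ gives the generating vector field $Z_t = X'_t + (\psi'_t)_*X_t$, where $X'_t$ generates $\Psi'$ and $X_t$ generates $\Psi$; this is the contact analogue of the familiar product formula for time-dependent vector fields. Then $\Dev_\lambda(\Psi'\Psi)(t,\cdot) = -\lambda(Z_t) = -\lambda(X'_t) - \lambda\big((\psi'_t)_*X_t\big)$. The first term is $\Dev_\lambda(\Psi')(t,\cdot)$. For the second term, $\lambda\big((\psi'_t)_*X_t\big) = \big((\psi'_t)^*\lambda\big)(X_t)\circ(\psi'_t)^{-1} = e^{g_{\psi'_t}\circ(\psi'_t)^{-1}}\,\lambda(X_t)\circ(\psi'_t)^{-1}$; using $g_{\psi'_t}\circ(\psi'_t)^{-1} = -g_{(\psi'_t)^{-1}}$ from Lemma \ref{lem:coboundary}(2) and $\lambda(X_t) = -H_t = -\Dev_\lambda(\Psi)(t,\cdot)$, this equals $-e^{-g_{\Psi}\circ(\Psi')^{-1}}\,\Dev_\lambda(\Psi)\circ(\Psi')^{-1}$ in the notation of the statement, where $g_\Psi\circ(\Psi')^{-1}$ means $(t,x)\mapsto g_{\psi_t}\big((\psi'_t)^{-1}(x)\big)$. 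Substituting gives exactly \eqref{eq:product-Hamiltonian}.

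The routine content is genuinely routine — it is just Cartan-type bookkeeping with pullbacks and the chain rule — so I would present it compactly, perhaps even deriving (1) as the special case $\Psi' = \Psi^{-1}$, $\Psi'\Psi = \mathrm{id}$ of (2) after first establishing (2), or vice versa, to avoid duplicating the computation. \textbf{The main obstacle is purely notational}: the statement writes things like $g_\Psi\circ\Psi$, $g_\Psi\circ(\Psi')^{-1}$, and $\Dev_\lambda(\Psi)\circ(\Psi')^{-1}$ for time-dependent composites, and one must be scrupulously consistent about \emph{which} time slice is composed with \emph{which} diffeomorphism, and about the distinction between evaluating a Hamiltonian in the ambient frame versus in the frame moving with the flow. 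A sign or a misplaced $\psi_t$ versus $\psi_t^{-1}$ is the only thing that can go wrong here, so the proof I write will consist mainly of making that bookkeeping unambiguous, with the underlying differential-geometric identities — the product rule for flows and the transformation $\lambda(\phi_*X) = (\phi^*\lambda)(X)\circ\phi^{-1}$ — stated once and then applied.
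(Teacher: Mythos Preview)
Your approach is exactly the ``straightforward calculation'' the paper alludes to---the paper gives no proof beyond that phrase, and your plan (differentiate the flow composition, use $\lambda(\phi_*X) = [(\phi^*\lambda)(X)]\circ\phi^{-1}$, and invoke Lemma~\ref{lem:coboundary}) is the standard and correct way to do it.

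One caution, precisely of the kind you yourself flag: in your sketch for item~(1) you write $\lambda\big((\psi_t^{-1})_*X_t\big) = \big((\psi_t^{-1})^*\lambda\big)(X_t)\circ\psi_t^{-1}$, but applying your own identity with $\phi=\psi_t^{-1}$ gives a trailing $\circ(\psi_t^{-1})^{-1}=\circ\,\psi_t$, not $\circ\,\psi_t^{-1}$. The correct output is $\Dev_\lambda(\Psi^{-1})(t,x)=-e^{-g_t(x)}H_t(\psi_t(x))$. Similarly in item~(2) you end up with $e^{g'_t\circ(\psi'_t)^{-1}}=e^{-g_{\Psi'^{-1}}}$ in the exponent, involving $g_{\Psi'}$ rather than $g_\Psi$; your identification of this with the paper's written $e^{-g_\Psi\circ(\Psi')^{-1}}$ is a slip (and indeed the paper's displayed exponent appears to contain a typo, as you can check by specializing to the strict case $g\equiv 0$ and comparing with the symplectic product formula). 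None of this affects the method---just be scrupulous with the $\psi_t$ versus $\psi_t^{-1}$ and $g_\Psi$ versus $g_{\Psi'}$ bookkeeping when you write it out, exactly as you anticipated.
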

In particular, with the more standard notation as in \cite{mueller-spaeth}, we have
\be\label{eq:barH'H}
\overline{H}' \# H(t,x)(:=\Dev_\lambda(\Psi'^{-1}\Psi)) = e^{-g_t'(\psi_t')}(H - H')(t,\psi'_t(x))
\ee
when $H = \Dev_\lambda(\Psi)$ and $H' = \Dev_\lambda(\Psi')$ and $g_{\Psi'}(t,x) = g_t'(x)$.
We remark that these formulae are reduced to the standard formulae
in the Hamiltonian dynamics in symplectic geometry if $g_t \equiv 1 \equiv g'_t$,
i.e., if $\psi_t, \, \psi_t'$ are $\lambda$-strict contactomorphism.

Finally the following formula provides an explicit relationship between the contact Hamiltonians
and the conformal exponents of a given contact isotopy.

\begin{prop}
Let $\Psi=\{\psi_t\}$ be a contact isotopy of $(M, \xi = \ker \lambda)$ with $\psi_t^*\lambda = e^{g_t}\lambda$
with $\Dev_\lambda(\Psi) = H$. We write $g_\Psi(t,x) = g_t(x)$. Then
\be\label{eq:dtdg}
\frac{\del g_\Psi}{\del t}(t,x) = -  R_\lambda[H](t,\psi_t(x)).
\ee
In particular, if $\psi_0 = id$,
\be\label{eq:gp}
g_\Psi(t,x) = \int_0^t - R_\lambda[H] (u,\psi_u(x))\, du.
\ee
\end{prop}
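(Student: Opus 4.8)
The plan is to differentiate the defining relation $\psi_t^*\lambda = e^{g_t}\lambda$ in $t$ and read off the claimed ODE for $g_\Psi$. First I would recall the standard Lie-derivative formula for a time-dependent flow: if $X_t$ generates $\Psi = \{\psi_t\}$ in the sense $\frac{d}{dt}\psi_t = X_t \circ \psi_t$, then $\frac{d}{dt}(\psi_t^*\lambda) = \psi_t^*(\CL_{X_t}\lambda)$. Since $X_t$ is a contact vector field with $\CL_{X_t}\lambda = f_t\,\lambda$, Lemma \ref{cor:LXtlambda} identifies $f_t = -R_\lambda[H_t]$ where $H_t = -\lambda(X_t) = \Dev_\lambda(\Psi)(t,\cdot)$. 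Hence
\be\label{eq:dt-pullback}
\frac{d}{dt}(\psi_t^*\lambda) = \psi_t^*\big(f_t\,\lambda\big) = (f_t\circ \psi_t)\,\psi_t^*\lambda = (f_t\circ\psi_t)\, e^{g_t}\lambda.
\ee

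On the other hand, differentiating the right-hand side of $\psi_t^*\lambda = e^{g_t}\lambda$ directly gives $\frac{d}{dt}(e^{g_t}\lambda) = \frac{\del g_t}{\del t}\, e^{g_t}\lambda$. Comparing this with \eqref{eq:dt-pullback} and cancelling the nowhere-vanishing one-form $e^{g_t}\lambda$ yields
$$
\frac{\del g_\Psi}{\del t}(t,x) = (f_t\circ \psi_t)(x) = -R_\lambda[H](t,\psi_t(x)),
$$
which is \eqref{eq:dtdg}. For \eqref{eq:gp}, I would then simply integrate in $t$, noting that the initial condition $\psi_0 = \mathrm{id}$ forces $\psi_0^*\lambda = \lambda$, hence $e^{g_0} \equiv 1$ and $g_\Psi(0,x) = 0$; the fundamental theorem of calculus then gives $g_\Psi(t,x) = \int_0^t -R_\lambda[H](u,\psi_u(x))\, du$.

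I do not anticipate a serious obstacle here; the only point that deserves care is the justification of $\frac{d}{dt}(\psi_t^*\lambda) = \psi_t^*(\CL_{X_t}\lambda)$ for a genuinely time-dependent generating field $X_t$ (as opposed to the autonomous case), together with a clean statement of the sign/normalization convention $\frac{d}{dt}\psi_t = X_t\circ\psi_t$ so that it is consistent with $H_t = -\lambda(X_t)$. Once these conventions are pinned down, \eqref{eq:dtdg} is immediate from Lemma \ref{cor:LXtlambda} and \eqref{eq:gp} is a one-line integration.
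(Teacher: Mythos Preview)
Your proposal is correct and follows essentially the same route as the paper: differentiate the relation $\psi_t^*\lambda = e^{g_t}\lambda$, use the flow identity $\frac{d}{dt}(\psi_t^*\lambda) = \psi_t^*(\CL_{X_t}\lambda)$ together with Lemma~\ref{cor:LXtlambda} to identify $\CL_{X_t}\lambda = -R_\lambda[H_t]\,\lambda$, compare the two expressions, and integrate using $g_0\equiv 0$. The only cosmetic difference is that you explicitly flag the time-dependent flow formula as the one point needing care, which the paper takes for granted.
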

\begin{proof} Clearly the latter equality at $t = 0$ holds since $\psi_0$ is the identity
and so $g_0 \equiv 0$. We differentiate $\psi_t^*\lambda = e^{g_t}\lambda$ to obtain
$$
\psi_t^*(\CL_{X_t}\lambda) = e^{g_t} \frac{\del g_t}{\del t} \lambda.
$$
But by Corollary \ref{cor:LXtlambda}, we have
\beastar
\psi_t^*(\CL_{X_t}\lambda)(t,x) & = &  \psi_t^*(-R_\lambda[H] \, \lambda)\\
& = & -(R_\lambda[H] \circ \psi_t)\,\psi_t^*\lambda = -(R_\lambda[H] \circ \psi_t)\,  e^{g_t} \lambda.
\eeastar
Comparing the two sides, we have derived
$$
\frac{\del g_\Psi}{\del t}(x) = -  R_\lambda[H](t,\psi_t(x))
$$
which is \eqref{eq:dtdg}.
By integrating it from 0 to $t$, we have derived \eqref{eq:gp}.
\end{proof}

\begin{rem} Bhupal \cite{bhupal} derived the corresponding formula \eqref{eq:gp} for the contact structure
$\lambda = dz - pdq$ in $\R^{2n+1}$
by a coordinate calculation. Since any contact structure admits canonical coordinates, his coordinate
calculation also derives \eqref{eq:gp}. Here we provide a simple coordinate-free proof by utilizing the basic
properties of contact Hamiltonian vector fields spelled out in this section.
\end{rem}

\subsection{Miscellaneous contact Hamiltonian calculi}

We also mention a few more interesting contact calculi, although they
will not be used in the present paper. These calculi are scattered around in various literature
with different contexts. (See \cite{LOTV}, \cite{dMV}, \cite{mueller-spaeth} for example.)

As a special case of Jacobi manifold $(M,L,\{\cdot,\cdot\})$
\cite{kirillov}, each contact manifold carries a natural Jacobi bracket
$$
\{\cdot, \cdot\}: \Gamma(L) \times \Gamma(L) \to \Gamma(L)
$$
which is the natural Lie bracket on $\mathfrak X(M,\xi)\cong T_{id}\Cont(M,\xi)$ of
the Lie group $\Cont(M,\xi)$. (See \cite{kirillov}, \cite{LOTV}.)

When $(M,\xi)$ is cooriented, the line bundle $L = \R_M$ is trivial and the associated
Jacobi bracket is also called the \emph{Lagrange bracket} in some literature. (See \cite{arnold-givental}.)
When a contact form $\lambda$ is given
the map $X \mapsto -\lambda(X)$ induces a Lie algebra isomorphism
given as follows. (Its existence is well-known, e.g., stated explicitly in \cite{arnold}, \cite{LOTV},
\cite{dMV} in a slightly different form but equivalent up to sign convention.)
Partly due to different sign conventions
literature-wise, we state them here to fix the sign convention of ours.

\begin{prop}[Compare with Proposition 5.6 \cite{LOTV}, Proposition 9 \cite{dMV}]
Let $(M,\xi)$ be cooriented and $\lambda$ be an associated contact form.
Define a bilinear map
$$
\{\cdot, \cdot\}: C^\infty(M) \times C^\infty(M) \to C^\infty(M).
$$
by
\be\label{eq:bracket}
\{H,G\}: = -\lambda([X_H,X_G]).
\ee
Then it satisfies Jacobi identity and the assignment $X \mapsto -\lambda(X)$ defines a
Lie algebra isomorphism $\mathfrak X(M,\xi)$ to $C^\infty(M)$.
\end{prop}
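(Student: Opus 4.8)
The plan is to verify the two separate assertions in turn: first that $\{H,G\} := -\lambda([X_H,X_G])$ endows $C^\infty(M)$ with a Lie algebra structure, and second that $X\mapsto -\lambda(X)$ is a Lie algebra isomorphism onto it. The second assertion is actually the engine for the first, so I would reverse the logical order. Recall that the map $H\mapsto X_H$ is a \emph{bijection} between $C^\infty(M)$ and $\mathfrak X(M,\xi)$ (this was highlighted right after \eqref{eq:contact-Hamiltonian} and follows from the explicit formula \eqref{eq:XH}; there is no ambiguity by an additive constant, unlike the symplectic case). Its inverse is precisely $X\mapsto -\lambda(X)$, since $-\lambda(X_H)=H$ by definition \eqref{eq:contact-Hamiltonian}. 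So the only thing that needs checking is that this bijection intertwines the Lie bracket $[\cdot,\cdot]$ on $\mathfrak X(M,\xi)$ with the bracket $\{\cdot,\cdot\}$ on $C^\infty(M)$ defined by \eqref{eq:bracket}.

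First I would observe that $\mathfrak X(M,\xi)$ is closed under the Lie bracket: if $X,Y$ are contact, i.e. $\CL_X\lambda = f_X\lambda$ and $\CL_Y\lambda = f_Y\lambda$, then $\CL_{[X,Y]}\lambda = \CL_X\CL_Y\lambda - \CL_Y\CL_X\lambda = (X[f_Y]-Y[f_X])\lambda + (f_X-f_Y)\,\text{(multiple of }\lambda)$... more precisely $\CL_{[X,Y]}\lambda = X[f_Y]\lambda + f_Y\CL_X\lambda - Y[f_X]\lambda - f_X\CL_Y\lambda = (X[f_Y]-Y[f_X])\lambda$, so $[X,Y]$ is again contact. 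Hence $\mathfrak X(M,\xi)$ is a Lie subalgebra of $\mathfrak X(M)$, and the $\lambda$-Hamiltonian of $[X,Y]$ is a well-defined smooth function, namely $-\lambda([X,Y])$. By the bijection, there is a unique $K\in C^\infty(M)$ with $X_K = [X_H,X_G]$, and tautologically $K = -\lambda([X_H,X_G]) = \{H,G\}$. Therefore $X_{\{H,G\}} = [X_H,X_G]$, which is exactly the statement that $H\mapsto X_H$ is a Lie algebra homomorphism, hence (being bijective) an isomorphism, once we know $\{\cdot,\cdot\}$ is a Lie bracket.

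But now the Lie bracket axioms for $\{\cdot,\cdot\}$ are inherited for free: bilinearity and antisymmetry are immediate from those of $[\cdot,\cdot]$ and the identity $X_{aH+bG}=aX_H+bX_G$ (linearity of $H\mapsto X_H$, clear from \eqref{eq:XH}), and the Jacobi identity follows by transporting the Jacobi identity for vector fields through the bijection:
\[
X_{\{\{H,G\},K\}} = [[X_H,X_G],X_K],
\]
and summing over cyclic permutations gives $X_{\{\{H,G\},K\}+\{\{G,K\},H\}+\{\{K,H\},G\}} = 0$, whence the argument vanishes by injectivity of $H\mapsto X_H$. This closes the circle: $\{\cdot,\cdot\}$ is a Lie bracket and $X\mapsto -\lambda(X)$ is a Lie algebra isomorphism.

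I expect the main obstacle to be purely bookkeeping of sign conventions — making sure the sign in $H=-\lambda(X)$ propagates consistently, in particular that $\{H,G\}=-\lambda([X_H,X_G])$ (with the leading minus) is the bracket for which $X_{\{H,G\}}=[X_H,X_G]$ holds on the nose rather than $X_{\{H,G\}}=-[X_H,X_G]$; this is exactly why the proposition is being stated, and I would double-check it against \eqref{eq:XH} and Lemma \ref{cor:LXtlambda}. Beyond that, everything is formal transport of structure along a bijection, so no genuine difficulty arises; one may optionally record the explicit formula $\{H,G\} = d\lambda(X_H,X_G) + H\,R_\lambda[G] - G\,R_\lambda[H]$ (or its sign-convention-dependent variant), derived by pairing \eqref{eq:dH} for $H$ against $X_G$ and symmetrizing, to make contact with \cite{LOTV} and \cite{dMV}, but it is not needed for the stated assertion.
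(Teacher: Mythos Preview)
Your proof is correct and complete. The paper does not actually supply a proof of this proposition: it is stated as a well-known fact with references to \cite{arnold}, \cite{LOTV}, \cite{dMV}, prefaced by the remark that ``its existence is well-known'' and included only ``to fix the sign convention.'' So there is nothing to compare against; your argument --- closure of $\mathfrak X(M,\xi)$ under the Lie bracket, then transport of the Lie algebra structure along the bijection $H\mapsto X_H$ using $X_{\{H,G\}}=[X_H,X_G]$ --- is the standard route and would serve perfectly well as the omitted proof.
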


We state the following special cases explicitly  for a future purpose.

\begin{prop}\label{prop:brackets} Let $H: M \to \R$ be a function and $X_H$ the contact
Hamiltonian vector field. Then
\begin{enumerate}
\item $dH(X_H) = -H R_\lambda[H]$.
\item Let $X$ be a contact vector field and let $H$ be
its Hamiltonian. Then
$[R_\lambda, X_H] = X_{-R_\lambda[H]}$. In other words, we have $\{1, H\} = - R_\lambda[H]$.
\end{enumerate}
\end{prop}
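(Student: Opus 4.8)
The plan is to prove both identities by reducing them to the defining formula \eqref{eq:dH} for $X_H$ together with the normalizations $\lambda(X_H)=-H$ and \eqref{eq:Rlambda} for $R_\lambda$; nothing is needed beyond the contact calculus already set up in this section.

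For part (1) I would evaluate \eqref{eq:dH} on the vector field $X_H$ itself. Since $d\lambda$ is alternating, $(X_H\rfloor d\lambda)(X_H)=0$, so the right-hand side collapses to $R_\lambda[H]\,\lambda(X_H)$, and using $\lambda(X_H)=-H$ this equals $-H\,R_\lambda[H]$. This is a one-line computation with no obstacle.

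For part (2) I would proceed in three steps. First, note that $[R_\lambda,X_H]$ is again a contact vector field: $R_\lambda$ is the contact vector field with Hamiltonian $-1$, $X_H$ is contact, and contact vector fields are closed under the Lie bracket (this is part of the Lie algebra isomorphism $\mathfrak X(M,\xi)\cong C^\infty(M)$ recorded just above). Hence, by the uniqueness half of the correspondence $H\leftrightarrow X_H$, it suffices to compute the contact Hamiltonian $-\lambda([R_\lambda,X_H])$ of this field. Second, I would compute $\lambda([R_\lambda,X_H])$ via the Cartan calculus identity $[R_\lambda,X_H]\rfloor\lambda=\CL_{R_\lambda}(X_H\rfloor\lambda)-X_H\rfloor(\CL_{R_\lambda}\lambda)$; since $\CL_{R_\lambda}\lambda=d(R_\lambda\rfloor\lambda)+R_\lambda\rfloor d\lambda=d(1)+0=0$ by \eqref{eq:Rlambda}, and $X_H\rfloor\lambda=-H$, this reduces at once to $\lambda([R_\lambda,X_H])=\CL_{R_\lambda}(-H)=-R_\lambda[H]$ --- which is exactly the identity \eqref{eq:f=CLX} already obtained in the proof of Lemma \ref{cor:LXtlambda} applied to $X=X_H$. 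Reading off the contact Hamiltonian from this and invoking uniqueness identifies the vector field $[R_\lambda,X_H]$, and carrying the signs through (recalling $R_\lambda=X_{-1}$ and the linearity of $G\mapsto X_G$) yields the stated formula. Third, the Jacobi-bracket reformulation is then immediate from the definition \eqref{eq:bracket}: since $R_\lambda=X_{-1}$ and $G\mapsto X_G$ is linear we have $X_1=-R_\lambda$, so $\{1,H\}=-\lambda([X_1,X_H])=\lambda([R_\lambda,X_H])=-R_\lambda[H]$.

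There is no genuinely hard step here; the only thing that requires care is the bookkeeping of signs --- in particular keeping track of the fact that under the convention $H=-\lambda(X)$ the Reeb field $R_\lambda$ corresponds to the constant Hamiltonian $-1$, not $+1$, which is what ties the vector-field identity and its Jacobi-bracket reformulation together consistently.
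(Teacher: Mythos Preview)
Your proposal is correct. For part (2) your argument is essentially identical to the paper's: both compute $\lambda([R_\lambda,X_H])$ via the identity $\lambda([R_\lambda,X_H])=R_\lambda[\lambda(X_H)]-(\CL_{R_\lambda}\lambda)(X_H)$ (your Cartan-calculus line is the same identity in slightly different notation), use $\CL_{R_\lambda}\lambda=0$ and $\lambda(X_H)=-H$, and then unwind the bracket $\{1,H\}$ via $R_\lambda=X_{-1}$.

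The genuine difference is in part (1). The paper proves it by plugging $X_H$ into the explicit coordinate formula \eqref{eq:Xt-1} in canonical coordinates, whereas you evaluate the coordinate-free identity \eqref{eq:dH} on $X_H$ and use the antisymmetry of $d\lambda$. Your route is shorter and intrinsic; the paper's route has the minor advantage that it ties the identity back to the explicit Hamiltonian vector field, but at the cost of a coordinate computation that is not really needed here.
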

\begin{proof}

For the identity (1), we just evaluate $dH$ against the formula \eqref{eq:Xt-1} of $X_H$ in canonical coordinates
for the time-independent $H$.

For the identity (2), we first note that the Hamiltonian of $R_\lambda$ as a contact
vector field  is the constant function $-1$.
We need to compute $\lambda([R_\lambda,X_H])$. We have
$$
\lambda([R_\lambda,X_H]) = R_\lambda[\lambda(X_H)] - (\CL_{R_\lambda}\lambda)(X_H)
= R_\lambda[- H] = - R_\lambda[H]
$$
which finishes the proof of the first statement. For the second, we recall $\lambda(R_\lambda) \equiv 1$
and hence $R_\lambda = X_{-1}$. Therefore we have
$$
\{1,H\} = -\lambda([X_1,X_H]) = - \lambda([- R_\lambda,X_H]) =\lambda([R_\lambda,X_H]) =- R_\lambda[H]
$$
where the last equality comes from the first statement.
\end{proof}

\begin{rem} \begin{enumerate}
\item The identity (1) above is interpreted as `energy dissipation' in \cite{dMV}.
One interesting consequence of this identity is that the contact Hamiltonian vector field $X_H$
is tangent to the zero-level set $H^{-1}(0)$. In this sense, the zero-level set forms an \emph{equilibrium status}
of the flow of $X_H$. The same also holds for the zero-level set of the function $R_\lambda[H]$.
\item The non-vanishing of the bracket $\{1,H\} = -R_\lambda[H]$
exhibits quite a contrast from the case of
symplectic Poisson bracket, which also indicates that the Jacobi bracket does not satisfy
the Leibniz rule.
\item The equality $\psi^*X_H = X_{H\circ \psi}$ holds for any strict contactomorphism $\psi$
not just for the Reeb flow. In other words, we have
\be\label{eq:pull-back}
\Dev_\lambda(\psi^{-1} \circ \Psi \circ \psi) = \Dev_\lambda(\Psi) \circ \psi
\ee
for any strict contactomorphism $\psi$. (See  \cite[Lemma 2.2]{mueller-spaeth}.)
\end{enumerate}
\end{rem}

\section{First variation of the contact action functional}
\label{sec:1st-variation}

Let $X_t$ be a contact Hamiltonian vector field and its associated Hamiltonian
$$
H(t,x) = - \lambda(X_t).
$$
We denote by $\psi_H^t$ its flow and $(\psi_H^t)^*\lambda = e^{g_t}\lambda$.
Consider the free path space
$$
\CP: = C^\infty([0,1],M) = \{\gamma: [0,1] \to M \}
$$
and consider the action functional
\be\label{eq:AAH}
\CA_H(\gamma) = -\int \gamma^*\lambda - \int_0^1 H(t,\gamma(t))\, dt
\ee
A straightforward calculation derives the following first variation formula
of the action functional \eqref{eq:AAH} on a \emph{free} path space $\CP$.

\begin{prop} For any vector field $\eta$ along $\gamma$, we have
\bea\label{eq:1st-variation}
\delta \CA_H(\gamma)(\eta) & = & - \int_0^1 \left(d\lambda(\dot \gamma - X_H(t, \gamma(t)), \eta) -
 R_\lambda[H](\gamma) \lambda(\eta)\right)\, dt \nonumber\\
 &{}& \quad  - \langle \lambda(\gamma(1)), \eta(\gamma(1)) \rangle
 + \langle \lambda(\gamma(0)), \eta(\gamma(0)) \rangle -a(1) + a(0).
\eea
\end{prop}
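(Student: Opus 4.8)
The plan is to compute $\delta\CA_H(\gamma)(\eta)=\frac{d}{ds}\big|_{s=0}\CA_H(\gamma_s)$ for a smooth variation $\gamma_s$ with $\gamma_0=\gamma$ and $\frac{\partial}{\partial s}\big|_{s=0}\gamma_s=\eta$, treating the two summands of \eqref{eq:AAH} separately. For the first term $-\int\gamma_s^*\lambda$, I would use Cartan's formula: pulling back $\lambda$ by the variation and differentiating gives $-\int_0^1\big(d\lambda(\dot\gamma,\eta)+\frac{d}{dt}(\lambda(\eta))\big)\,dt$, and the total-derivative piece integrates to the boundary term $-\langle\lambda(\gamma(1)),\eta(\gamma(1))\rangle+\langle\lambda(\gamma(0)),\eta(\gamma(0))\rangle$. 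For the second term $-\int_0^1 H(t,\gamma(t))\,dt$, differentiation yields $-\int_0^1 dH(\eta)\,dt$.

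Next I would substitute the defining identity \eqref{eq:dH}, namely $dH=X_H\rfloor d\lambda+R_\lambda[H]\,\lambda$, so that $-dH(\eta)=-d\lambda(X_H,\eta)-R_\lambda[H]\,\lambda(\eta)$. Combining with the first-term computation and using antisymmetry of $d\lambda$ to write $d\lambda(\dot\gamma,\eta)-d\lambda(X_H,\eta)=d\lambda(\dot\gamma-X_H(t,\gamma(t)),\eta)$, I obtain exactly the integrand
$$
-\Big(d\lambda(\dot\gamma-X_H(t,\gamma(t)),\eta)-R_\lambda[H](\gamma)\,\lambda(\eta)\Big),
$$
which reproduces the bulk term of \eqref{eq:1st-variation} together with the two pointwise pairings $-\langle\lambda(\gamma(1)),\eta(\gamma(1))\rangle+\langle\lambda(\gamma(0)),\eta(\gamma(0))\rangle$.

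The remaining terms $-a(1)+a(0)$ in the statement must come from an auxiliary function $a=a(t)$ attached to the variation (presumably encoding the variation of the conformal exponent $g_t$ or a lift of the path to a bundle over $M$); since $a$ is not defined in the excerpt, I would interpret it as whatever boundary contribution the intended variational setup produces, and simply carry it along additively — it does not interact with the interior computation above. The main obstacle, then, is bookkeeping rather than analysis: making sure the sign conventions from the Convention section and from \eqref{eq:dH} are applied consistently, and correctly identifying the origin of the $a(1),a(0)$ terms so that the boundary contributions are stated in the precise form claimed. Once the sign conventions are pinned down, the proof is a one-line application of Cartan's formula plus the substitution of \eqref{eq:dH}.
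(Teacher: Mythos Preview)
Your approach is essentially identical to the paper's: take a variation $\gamma_s$, apply Cartan's formula $\frac{d}{ds}\big|_{s=0}\gamma_s^*\lambda = d(\eta\rfloor\lambda) + \eta\rfloor d\lambda$ to the first summand, differentiate the Hamiltonian term to get $-\int dH(\eta)\,dt$, and substitute the identity $dH = X_H\rfloor d\lambda + R_\lambda[H]\,\lambda$ from \eqref{eq:dH}. The paper's proof is precisely these three lines.

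Your only misstep is the speculation about $a$. There is nothing exotic going on: immediately after the proposition the paper decomposes $\eta = \eta^\pi + a\,R_\lambda$ with $a(t):=\lambda(\eta(t))$. Thus the terms $-a(1)+a(0)$ are \emph{exactly} the boundary contribution $-\lambda_{\gamma(1)}(\eta(1))+\lambda_{\gamma(0)}(\eta(0))$ that you already obtained from integrating the total derivative $\frac{d}{dt}\lambda(\eta)$; they are not an additional contribution from a conformal exponent or a bundle lift. (In fact the stated formula \eqref{eq:1st-variation} appears to record this single boundary term twice --- once in the pairing notation and once as $-a(1)+a(0)$ --- which is a redundancy in the paper rather than a gap in your computation.) Once you identify $a=\lambda(\eta)$, your derivation is complete and matches the paper.
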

\begin{proof} Let $\gamma_s$ with $ -\varepsilon < s < \varepsilon$ such that
$$
\gamma_0 = \gamma, \quad \frac{\del \gamma_s}{\del s}\Big|_{s = 0} = \eta
$$
and compute
$$
\frac{d}{ds}\Big|_{s=0} \CA(\gamma_s) = - \frac{d}{ds}\Big|_{s=0}\int \gamma_s^*\lambda -
\frac{d}{ds}\Big|_{s=0} \int_0^1 H(t, \gamma_s(t))\, dt.
$$
For the first term, we have
$$
\frac{d}{ds}\Big|_{s=0} \gamma_s^*\lambda = d(\eta \rfloor \lambda) + \eta \rfloor d\lambda.
$$
On the other hand,
$$
dH = X_H \rfloor d\lambda + R_\lambda[H]\, \lambda.
$$
Combining the above, we have finished the proof.
\end{proof}

We recall that $\lambda$ provides a natural decomposition
$$
TM = \xi \oplus \R \langle R_\lambda \rangle
$$
which induces a decomposition $\eta(t) = \eta^\pi(t) + a(t) R_\lambda(\gamma(t))$
with $a(t) = \lambda(\eta(t))$ and hence
the splitting
$$
T_\gamma \CP =  \Gamma(\gamma^*TM) = \Gamma(\gamma^*\xi) \oplus C^\infty([0,1], \R)\langle R_\lambda \rangle.
$$
This in turn provides a decomposition of the first variation
$$
\delta \CA_H(\eta) = \delta \CA_H (\eta^\pi) + \delta \CA_H (a R_\lambda)
$$
In terms of this decomposition, we can rewrite \eqref{eq:1st-variation} into
\bea \label{eq:first-variation-AAH}
\delta \CA_H(\eta) & = & - \int_0^1 d\lambda((\dot \gamma - X_H(t, \gamma(t)))^\pi, \eta) \, dt
- \int_0^1 R_\lambda[H](\gamma) a(t)\, dt
\nonumber \\
 &{}& - \langle \lambda(\gamma(1)), \eta^\pi(\gamma(1)) \rangle
 + \langle \lambda(\gamma(0)), \eta^\pi(\gamma(0)) \rangle
 - a(1) + a(0)
\eea
for $\eta = \eta^\pi + a R_\lambda$. More precisely, we have
\bea
\delta \CA_H (\eta^\pi) & = & - \int_0^1 d\lambda((\dot \gamma - X_H(t, \gamma(t)))^\pi, \eta) \, dt
\nonumber\\
&{}& - \langle \lambda(\gamma(1)), \eta^\pi(\gamma(1)) \rangle
 + \langle \lambda(\gamma(0)), \eta^\pi(\gamma(0)) \rangle \label{eq:AAH=pi}\\
\delta \CA_H (a R_\lambda) & = & -\int_0^1 R_\lambda[H](\gamma) a(t)\, dt - a(1) + a(0). \label{eq:AAH-perp}
\eea

Summarizing the above discussion and considering the (Moore) path space starting from a
Legendrian submanifold $R_0$ with free end at $T$
$$
\CL(M;R_0) =\bigcup_{T \in \R} \{T\} \times \{\gamma:[0,T] \to M \mid \gamma(0) \in R_0\}
$$
we have proved that the $\pi$-critical point, i.e., a path
satisfying $\delta \CA_H(\gamma) = 0$ \emph{with fixed $T$} satisfies
\be\label{eq:Crit-AAH}
(\dot \gamma - X_H(t, \gamma(t)))^\pi = 0, \quad \gamma(0) \in R_0.
\ee
Then taking the $R_\lambda$ component  \eqref{eq:AAH-perp}
(with $1$ replaced by $T$), we obtain
\be\label{eq:a-Crit-AAH}
\int_0^T R_\lambda[H](\gamma) a(t)\, dt + a(T) = 0.
\ee
\begin{prop}\label{prop:constraint-variation} Consider the restriction of $\CA_H$ on
the level set curves $\gamma$ satisfying
\be\label{eq:Reeb-constraint}
\CA_H^{-1}(0) \cap \CL(M;R_0).
\ee
If $(\dot \gamma - X_H(t,\gamma))^\pi = 0$, $\gamma$ is
a critical point thereof in the level set $\CA_H^{-1}(0) \subset \CL(M;R_0)$
of codimension 1.
\end{prop}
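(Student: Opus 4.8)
The plan is to compute the \emph{full} first variation of $\CA_H$ on the Moore path space $\CL(M;R_0)$, allowing the terminal parameter $T$ to vary as well, and then to substitute the $\pi$-equation $(\dot\gamma - X_H(t,\gamma(t)))^\pi = 0$. Relative to \eqref{eq:first-variation-AAH} with $1$ replaced by $T$, the only new contribution comes from differentiating the variable upper limit of integration in $\CA_H(T,\gamma) = -\int_0^T\gamma^*\lambda - \int_0^T H(t,\gamma(t))\,dt$: along a variation $(T_s,\gamma_s)$ of $(T,\gamma)$ with infinitesimal data $(\delta T,\eta)$, $\eta(0)\in T_{\gamma(0)}R_0$, this yields the extra boundary term $-\delta T\,\big(\lambda(\dot\gamma(T)) + H(T,\gamma(T))\big)$. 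Since $R_0$ is Legendrian, $\eta(0)\in\xi$, so $a(0):=\lambda(\eta(0)) = 0$ and all the $t=0$ boundary terms in \eqref{eq:first-variation-AAH} drop out.

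Next I would specialize to a $\pi$-solution $\gamma\in\CL(M;R_0)$: then $\dot\gamma - X_H = c\,R_\lambda$ with $c := \lambda(\dot\gamma) + H$, so $d\lambda(\dot\gamma - X_H,\cdot)\equiv 0$ because $R_\lambda\rfloor d\lambda = 0$ by \eqref{eq:Rlambda}, and $\lambda(\dot\gamma(T)) + H(T,\gamma(T)) = c(T)$. Thus every $\xi$-direction $\eta^\pi$ is annihilated and, writing $a(t) := \lambda(\eta(t))$, the first variation collapses to the single linear functional
$$
L(\delta T,\eta) = -\int_0^T R_\lambda[H](\gamma)\,a(t)\,dt - a(T) - c(T)\,\delta T ,
$$
which is exactly \eqref{eq:a-Crit-AAH} augmented by the endpoint-period term $-c(T)\,\delta T$. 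The point is that $L$ is \emph{not} identically zero --- evaluating it with $\delta T = 0$ and $a$ a bump supported near $t = T$ normalized so that $a(T) = 1$ gives $L \approx -1$ --- and, at a non-$\pi$-solution, $\delta\CA_H(\gamma)$ is likewise nonzero on some $\eta^\pi$ by the nondegeneracy of $d\lambda|_\xi$. Hence $0$ is a regular value of $\CA_H\colon \CL(M;R_0)\to\R$, so $\CA_H^{-1}(0)\cap\CL(M;R_0)$ is a submanifold of codimension one with tangent space $\ker\delta\CA_H(\gamma)$ at each point $\gamma$.

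Finally I would conclude by the elementary fact that a linear functional vanishes on its own kernel: at a $\pi$-solution $\gamma$ on $\CA_H^{-1}(0)$ the tangent space to the level set is $\ker\delta\CA_H(\gamma) = \ker L$, and the restriction of $\delta\CA_H(\gamma)$ to this subspace equals the restriction of $L$ to $\ker L$, which is zero; thus $\gamma$ is a critical point of $\CA_H|_{\CA_H^{-1}(0)}$. What this records is the contrast with the symplectic case: the single ``bad'' direction --- the Reeb/period direction carried by \eqref{eq:a-Crit-AAH} together with the $c(T)\,\delta T$ term --- that prevents $\gamma$ from being an honest critical point of $\CA_H$ on the whole of $\CL(M;R_0)$ is precisely the conormal direction removed by the codimension-one constraint, so the $\pi$-equation alone suffices on $\CA_H^{-1}(0)$. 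I expect the only real work to be the bookkeeping of the variable-endpoint boundary term and the verification that $\delta\CA_H(\gamma)\neq 0$ along $\pi$-solutions (so that $\CA_H^{-1}(0)$ is genuinely a codimension-one manifold near $\gamma$ and ``critical point thereof'' is meaningful); the infinite-dimensional regular-value and submanifold assertions are routine in the standard Banach-manifold model for the path space.
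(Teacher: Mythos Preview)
Your approach is correct and follows the same core idea as the paper's: at a $\pi$-solution the first variation $\delta\CA_H(\gamma)$ reduces to a single linear functional in the Reeb/period direction, and this functional tautologically vanishes on its own kernel, which is the tangent space to the level set $\CA_H^{-1}(0)$.

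Your treatment is in fact more complete than the paper's in two respects. First, the paper's proof works throughout with fixed $T$ (writing integrals over $[0,1]$) and never explicitly produces the variable-endpoint term $-c(T)\,\delta T$ that you derive; you handle the Moore path space structure of $\CL(M;R_0)$ honestly. Second, the paper simply asserts ``codimension $1$'' without justification, whereas you verify that $0$ is a regular value by exhibiting a direction (a bump in $a$ near $t=T$) along which $\delta\CA_H(\gamma)\neq 0$; this makes the phrase ``critical point thereof'' meaningful. The paper's proof, by contrast, writes out the reduced formula for $\delta\CA_H(\gamma)(\eta)$ under the $\pi$-equation and then concludes directly, leaving both of these points implicit.
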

\begin{proof} Let $\eta$ be a first variation of $\gamma$ under the variational
constraint \eqref{eq:Reeb-constraint}. Then any such $\eta = \eta^\pi + a R_\lambda$ satisfies
\beastar
0 & = & \delta \CA_H(\gamma)(\eta) \nonumber \\
& = & - \int_0^1 d\lambda((\dot \gamma - X_H(t, \gamma(t)))^\pi, \eta) \, dt
- \langle(\lambda(\gamma(1)), \eta(1) \rangle \nonumber\\
&{}& \quad -\int_0^1 R_\lambda[H](\gamma) a(t)\, dt - a(1) \label{eq:AAH-perp}
\eeastar
where we use the Legendrian boundary condition $\gamma(0) \in R_0$ applied to \eqref{eq:first-variation-AAH}.
In particular if $(\dot \gamma - X_H(\gamma))^\pi) = 0$, then we obtain
$$
0 = - \langle(\lambda(\gamma(1)), \eta(1) \rangle - \int_0^1 R_\lambda[H](\gamma) a(t)\, dt - a(1).
$$
By substituting these into \eqref{eq:1st-variation}, we have obtained
$$
\delta \CA_h(\gamma) = 0
$$
inside the level set $\CA_H^{-1}(0)$. This finishes the proof.
\end{proof}

\begin{rem} \begin{enumerate}
\item
In fact, we can decompose contact Hamilton's equation $\dot x = X_H(t,x)$ into
\be\label{eq:equation-decompose}
\dot x = X_H(t,x) \Longleftrightarrow
\begin{cases} (\dot x - X_H(t,x))^\pi = 0\\
\gamma^*(\lambda + H\, dt) = 0
\end{cases}
\ee
with respect to the decomposition of the tangent bundle $TM = \xi \oplus \R\langle R_\lambda \rangle$.
This explains somewhat mysterious appearance of the Reeb component $d(w^*\lambda \circ j) = 0$
for the contact instanton equation, the one
whose asymptotic limiting equation is the Reeb component $\gamma^*(\lambda + H\, dt)$,
contact Hamilton's equation.
\item Since any contact Hamiltonian trajectory satisfies $\gamma^*(\lambda + H\, dt) = 0$,
we have $\CA_H(\gamma) = 0$ for any
contact Hamiltonian trajectory $\gamma$. Therefore Proposition \ref{prop:constraint-variation}
shows that any contact Hamiltonian trajectory is a critical point of the aforementioned constrained variational problem
in Proposition \ref{prop:constraint-variation}.
\item
For general contact manifolds, there does not seem to exist a $0$-order constraint
for $\gamma$ at $t=T$ that naturally provides a gradient structure for the contact instanton equation.
In Section \ref{sec:future-works}, we will show that there is such a boundary condition
for the case of one-jet bundles.
\end{enumerate}
\end{rem}

We now examine the relationship between the critical points of
the aforementioned constrained action functional and the contact Hamiltonian trajectories.

\begin{prop}\label{prop:lifting} Suppose a path $\gamma:[0,1] \to M$ satisfies
\be\label{eq:Crit-AAH}
(\dot \gamma - X_H(t, \gamma(t)))^\pi = 0.
\ee
Define the function $\rho: [0,1] \to \R$ by $\rho(t): = -\CA_H(\gamma|_{[0,t]})$ and
consider the Reeb-translated Hamiltonian
$$
\widetilde H(t,x): = H(t, \phi_{R_\lambda}^{\rho(t)}(x)).
$$
Then the Reeb-translated curve
$$
\widetilde \gamma(t) = \phi_{R_\lambda}^{-\rho(t)}(\gamma(t))
$$
satisfies the Hamilton's equation $\dot x = X_{\widetilde H}(t,x)$ for $\widetilde H$.
\end{prop}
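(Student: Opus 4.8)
The plan is to establish the asserted identity $\dot{\widetilde\gamma}(t)=X_{\widetilde H}(t,\widetilde\gamma(t))$ by differentiating $\widetilde\gamma(t)=\phi_{R_\lambda}^{-\rho(t)}(\gamma(t))$ directly, exploiting that the Reeb flow $\phi_{R_\lambda}^s$ is a strict contactomorphism and that $\rho=-\CA_H$ is designed precisely to absorb the Reeb-component error of $\gamma$.

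First I would record two elementary inputs. From $\rho(t)=-\CA_H(\gamma|_{[0,t]})=\int_0^t\gamma^*\lambda+\int_0^t H(s,\gamma(s))\,ds$ one gets
\[
\dot\rho(t)=\lambda(\dot\gamma(t))+H(t,\gamma(t)),
\]
which is exactly the pointwise defect of the Reeb component $\gamma^*(\lambda+H\,dt)=0$ of contact Hamilton's equation, cf.\ \eqref{eq:equation-decompose}. Second, since $(\phi_{R_\lambda}^s)^*\lambda=\lambda$, the naturality of the contact Hamiltonian vector field under strict contactomorphisms (the Remark following Proposition~\ref{prop:brackets}) yields $(\phi_{R_\lambda}^s)^*X_{H_t}=X_{H_t\circ\phi_{R_\lambda}^s}$, and the Reeb field is invariant under its own flow, $d\phi_{R_\lambda}^s(R_\lambda)=R_\lambda\circ\phi_{R_\lambda}^s$.

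Next I would differentiate $\widetilde\gamma$ by the chain rule, viewing $\phi_{R_\lambda}^{-\rho(t)}(\gamma(t))$ as a function of the flow time $-\rho(t)$ and of the base point $\gamma(t)$, which gives
\[
\dot{\widetilde\gamma}(t)=-\dot\rho(t)\,R_\lambda(\widetilde\gamma(t))+d\phi_{R_\lambda}^{-\rho(t)}(\dot\gamma(t)).
\]
Separately, writing $\widetilde H_t=H_t\circ\phi_{R_\lambda}^{\rho(t)}$ and applying the naturality above, evaluated at $\widetilde\gamma(t)$ where $\phi_{R_\lambda}^{\rho(t)}(\widetilde\gamma(t))=\gamma(t)$, I obtain
\[
X_{\widetilde H}(t,\widetilde\gamma(t))=d\phi_{R_\lambda}^{-\rho(t)}(X_H(t,\gamma(t))).
\]
Pushing the term $\dot\rho(t)\,R_\lambda(\widetilde\gamma(t))$ back through $d\phi_{R_\lambda}^{-\rho(t)}$ using the flow-invariance of $R_\lambda$ and then cancelling the isomorphism $d\phi_{R_\lambda}^{-\rho(t)}$, the equality of the two displays reduces to the pointwise relation at $\gamma(t)$:
\[
\dot\gamma(t)-X_H(t,\gamma(t))=\dot\rho(t)\,R_\lambda(\gamma(t)).
\]

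Finally I would verify this relation by decomposing both sides along $TM=\xi\oplus\R\langle R_\lambda\rangle$: the $\xi$-component is precisely the hypothesis \eqref{eq:Crit-AAH}, namely $(\dot\gamma-X_H)^\pi=0$, while the $R_\lambda$-component, using $\lambda(X_H)=-H$, says $\lambda(\dot\gamma(t))+H(t,\gamma(t))=\dot\rho(t)$, which is the first elementary input above. I expect no genuine obstacle here; the only spots demanding care are the bookkeeping of pullback versus pushforward in the naturality step and the chain-rule expansion of the time-dependent Reeb translation $\phi_{R_\lambda}^{-\rho(t)}$. The conceptual point, already visible from this outline, is that $\rho=-\CA_H$ is the antiderivative of the Reeb-component defect of $\gamma$, so translating $\gamma$ by $\phi_{R_\lambda}^{-\rho(t)}$ converts a merely $\xi$-component solution of contact Hamilton's equation into an honest trajectory of the conjugated Hamiltonian $\widetilde H$.
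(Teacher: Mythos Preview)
Your proposal is correct and follows essentially the same route as the paper: both compute $\dot\rho(t)=\lambda(\dot\gamma(t))+H(t,\gamma(t))$, differentiate $\widetilde\gamma$ by the chain rule, and use the invariance of $R_\lambda$ under its own flow together with the naturality $(\phi_{R_\lambda}^s)^*X_{H_t}=X_{H_t\circ\phi_{R_\lambda}^s}$ for strict contactomorphisms. The only difference is cosmetic: the paper first establishes the pointwise identity $\dot\gamma-X_H=\dot\rho\,R_\lambda(\gamma)$ and then substitutes it into the chain-rule expansion of $\dot{\widetilde\gamma}$, whereas you compute $\dot{\widetilde\gamma}$ and $X_{\widetilde H}(\widetilde\gamma)$ separately and reduce their equality to that same pointwise identity.
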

\begin{proof} Since $\gamma$ satisfies \eqref{eq:Crit-AAH}, we can write
$\dot \gamma(t) - X_H(t,\gamma(t)) = b(t) R_\lambda(\gamma(t))$, i.e.,
$$
\dot \gamma(t) = b(t) R_\lambda(\gamma(t)) + X_H(t,\gamma(t))
$$
for some function $b = b(t)$. In fact, we have
$$
b(t) = \lambda(\dot \gamma(t) - X_H(t,\gamma(t))) = \lambda(\dot \gamma(t)) + H(t,\gamma(t)).
$$

Now consider the flow of the time-dependent
vector field $b(t) R_\lambda$ which is just a reparameterization of the Reeb flow
$$
t \mapsto \phi_{R_\lambda}^{\rho(t)}, \quad \rho(t): = \int_0^t b(u)\, du = -\CA_H(\gamma|_{[0,t]}).
$$
By definition, we have $b(t) = \rho'(t)$. We define
$$
\widetilde \gamma(t): = \phi_{R_\lambda}^{-\rho(t)}(\gamma(t))
$$
and compute its derivative
\beastar
\frac{d}{dt} \widetilde \gamma(t) & = & - \rho'(t) R_\lambda(\widetilde \gamma(t))
+ d\psi_{R_{\lambda}}^{-\rho(t)} \dot \gamma(t)\\
& = & - \rho'(t) R_\lambda(\widetilde \gamma(t))
+ d\psi_{R_{\lambda}}^{\rho(t)}(\rho'(t)R_\lambda(\gamma(t)) + X_H(t,\gamma(t)))\\
& = & d\psi_{R_{\lambda}}^{-\rho(t)}(X_H(t,\gamma(t)))
= (\psi_{R_{\lambda}}^{\rho(t)})^*X_H(t,\widetilde \gamma(t))) \\
& = & X_{H_\circ \phi_{R_\lambda}^{\rho(t)}}(t,\widetilde \gamma(t))).
\eeastar
where we use the identity
$$
R_\lambda(\widetilde \gamma(t)) = d\psi_{R_{\lambda}}^{\rho(t)} R_\lambda(\gamma(t))
$$
for the penultimate equality and \eqref{eq:pull-back} for the last equality, respectively.
This finishes the proof.
\end{proof}

\begin{rem} In particular, the asymptotic limit $\gamma$ of any finite $\pi$-energy
solution of \eqref{eq:perturbed-contacton-bdy-intro} satisfies
$$
(\dot \gamma - X_H(t,\gamma(t)))^{\pi} = 0.
$$
It is shown in \cite{oh:entanglement1} that the asymptotic limit indeed of the form
$$
t \mapsto \phi_H^t \circ (\phi_H^1)^{-1}\circ \phi_{R_\lambda}^t(\phi_H^1(p))
$$
for some point $p \in R_0$.
\end{rem}

\section{Contact instantons with Legendrian boundary condition}
\label{sec:contact-instantons}

In this section, consider
the Hamiltonian-perturbed contact instanton equation \eqref{eq:contacton-Legendrian-bdy-intro}
with the finite $\pi$-energy condition
$$
\int_{\dot \Sigma} |(d w - X_H \otimes dt)^\pi|^2 < \infty
$$
on general contact manifolds $(M,\lambda)$ equipped with a $\lambda$-adapted
CR-almost complex structure called a \emph{contact triad} in \cite{oh-wang1,oh-wang2,oh-wang3}.

To highlight the main points of the a priori estimates of perturbed contact instantons \emph{with
Legendrian boundary condition}, we will restrict to the case with $H = -1$ in the rest of the present
paper postponing the discussion of the modifications needed to handle the Hamiltonian term to
\cite{oh-yso} and other sequels.
We start with a review of the contact triad connection introduced in \cite{oh-wang1}.

\subsection{Review of the contact triad connection}
\label{subsec:connection}

Assume $(M, \lambda, J)$ is a contact triad for the contact manifold $(M, \xi)$, and equip with it the contact triad metric
$g=g_\xi+\lambda\otimes\lambda$.
In \cite{oh-wang1}, the authors introduced the \emph{contact triad connection} associated to
every contact triad $(M, \lambda, J)$ with the contact triad metric and proved its existence and uniqueness.

\begin{thm}[Contact Triad Connection \cite{oh-wang1}]\label{thm:connection}
For every contact triad $(M,\lambda,J)$, there exists a unique affine connection $\nabla$, called the contact triad connection,
 satisfying the following properties:
\begin{enumerate}
\item The connection $\nabla$ is  metric with respect to the contact triad metric, i.e., $\nabla g=0$;
\item The torsion tensor $T$ of $\nabla$ satisfies $T(R_\lambda, \cdot)=0$;
\item The covariant derivatives satisfy $\nabla_{R_\lambda} R_\lambda = 0$, and $\nabla_Y R_\lambda\in \xi$ for any $Y\in \xi$;
\item The projection $\nabla^\pi := \pi \nabla|_\xi$ defines a Hermitian connection of the vector bundle
$\xi \to M$ with Hermitian structure $(d\lambda|_\xi, J)$;
\item The $\xi$-projection of the torsion $T$, denoted by $T^\pi: = \pi T$ satisfies the following property:
\be\label{eq:TJYYxi}
T^\pi(JY,Y) = 0
\ee
for all $Y$ tangent to $\xi$;
\item For $Y\in \xi$, we have the following
$$
\del^\nabla_Y R_\lambda:= \frac12(\nabla_Y R_\lambda- J\nabla_{JY} R_\lambda)=0.
$$
\end{enumerate}
\end{thm}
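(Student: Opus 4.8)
The contact form gives the $g$-orthogonal splitting $TM=\xi\oplus\R\langle R_\lambda\rangle$, $g=g_\xi+\lambda\otimes\lambda$, and the plan is to use it to rewrite the whole statement as the existence and uniqueness of a pair --- a Hermitian connection on $\xi\to M$, together with one auxiliary tensor --- to prove uniqueness first by a difference-tensor computation, and then to produce the connection that this computation forces and check the six axioms directly. The first observation is that axiom (1) already imposes structure: differentiating $g(R_\lambda,R_\lambda)\equiv 1$ gives $g(\nabla_X R_\lambda,R_\lambda)=0$, so $\nabla_X R_\lambda\in\xi$ for all $X$ (the second half of (3)); and differentiating $g(R_\lambda,Y)\equiv0$ for $Y\in\xi$ gives $g_\xi(\nabla_X R_\lambda,Y)=-\lambda(\nabla_X Y)$. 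So setting $\alpha(X,Y):=\lambda(\nabla_X Y)$ for $Y\in\xi$, the connection $\nabla$ is exactly the data $(\nabla^\pi,\alpha)$ with $\nabla^\pi=\pi\nabla|_\xi$: one has $\nabla_X Y=\nabla^\pi_X Y+\alpha(X,Y)R_\lambda$ for $Y\in\xi$ and $\nabla_X R_\lambda=-\big(\alpha(X,\cdot)|_\xi\big)^{\sharp_{g_\xi}}$, and these extend to $TM$ by Leibniz. Axioms (2)--(6) become conditions on $(\nabla^\pi,\alpha)$, and the theorem says there is a unique admissible pair.

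\textbf{Uniqueness.} Suppose $\nabla,\nabla'$ are both admissible; then $A:=\nabla-\nabla'$ is a $(2,1)$-tensor with each $A_X$ skew for $g$, by (1). From the above, $A_X R_\lambda\in\xi$ and $g_\xi(A_X R_\lambda,Y)=-\lambda(A_X Y)$. Axiom (2) applied to the torsion difference $A_X Y-A_Y X$ yields $A_{R_\lambda}Y=A_Y R_\lambda$; taking its $\xi$-part gives $(A^\pi)_{R_\lambda}Y=A_Y R_\lambda$ for $Y\in\xi$, where $A^\pi:=\pi A|_\xi$ (the $[R_\lambda,Y]$-terms cancel). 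Along $\xi$-directions, (4) makes each $A^\pi_X$ lie in $\mathfrak u(\xi)$ (i.e.\ $J$-linear and $g_\xi$-skew), while (5) forces $(A_{JY}Y-A_Y JY)^\pi=0$ for all $Y\in\xi$; the pointwise linear-algebra lemma behind uniqueness of the Chern connection then gives $A^\pi|_{\xi\otimes\xi}=0$. For the Reeb column, (6) makes $Y\mapsto A_YR_\lambda$ conjugate-linear, $A_{JY}R_\lambda=-J A_YR_\lambda$, whereas $(A^\pi)_{R_\lambda}\in\mathfrak u(\xi)$ is $J$-linear, giving $A_{JY}R_\lambda=(A^\pi)_{R_\lambda}(JY)=J A_YR_\lambda$; hence $A_YR_\lambda=0$, then $(A^\pi)_{R_\lambda}=0$, and $\alpha$ cancels. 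Thus $A=0$.

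\textbf{Existence.} Now I would reverse the computation. On the almost-Hermitian bundle $(\xi,g_\xi,J)$ over $M$, let $\nabla^\pi$ be, in the $\xi$-directions, the canonical Hermitian connection determined by the torsion normalization (5); its existence and uniqueness is the contact analogue of the standard fact about the Chern connection (start from any Hermitian connection on $\xi$ and add the unique $\mathfrak u(\xi)$-valued $1$-form along $\xi$-directions achieving (5), by the same linear-algebra lemma). Prescribe $\nabla_Y R_\lambda:=-\tfrac12 J(\CL_{R_\lambda}J)Y$ for $Y\in\xi$ --- this is the value that will be forced by Hermitian-ness in the $R_\lambda$-direction, and since $\CL_{R_\lambda}J$ anticommutes with $J$ it is conjugate-linear in $Y$; set $\alpha(X,Y):=-g_\xi(\nabla_X R_\lambda,Y)$ on $\xi$-arguments and $\alpha(R_\lambda,\cdot):=0$; define $\nabla_X Y:=\nabla^\pi_X Y+\alpha(X,Y)R_\lambda$ for $Y\in\xi$, complete the definition of $\nabla^\pi$ by $(\nabla^\pi)_{R_\lambda}Y:=\nabla_YR_\lambda+[R_\lambda,Y]$, and extend to $TM$ by Leibniz. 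Then one verifies (1)--(6) in turn: (1) and the remaining half of (3) from the construction together with $\nabla^\pi g_\xi=0$ and $\CL_{R_\lambda}\lambda=0$; (2) because $(\nabla^\pi)_{R_\lambda}$ was defined to kill $T(R_\lambda,\cdot)$ and $\alpha(R_\lambda,\cdot)=0$; (4) along $\xi$ by construction, and in the Reeb direction from algebraic identities for $\CL_{R_\lambda}J$; (5) because $\nabla^\pi$ is the Chern connection along $\xi$; (6) because $\del^\nabla_YR_\lambda=\tfrac12(\nabla_YR_\lambda-J\nabla_{JY}R_\lambda)=0$ by conjugate-linearity.

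\textbf{The main obstacle.} The one genuinely substantive point is the \emph{consistency of the Reeb-direction data}: the value $\nabla_YR_\lambda=-\tfrac12 J(\CL_{R_\lambda}J)Y$ forced by the $J$-part of the Hermitian condition in the $R_\lambda$-direction must simultaneously be compatible with (6) and with $(\nabla^\pi)_{R_\lambda}$ preserving $d\lambda|_\xi$; that is, the six axioms, which at first sight over-determine $(\nabla^\pi,\alpha)$, turn out to be mutually consistent. Checking this requires $(\CL_{R_\lambda}J)J+J(\CL_{R_\lambda}J)=0$, $\CL_{R_\lambda}(d\lambda)=0$, the identity $d\lambda(\cdot,\cdot)=-g_\xi(\cdot,J\cdot)$ on $\xi$, and --- for the $d\lambda$-preservation --- that $\CL_{R_\lambda}J$ is $g_\xi$-symmetric (a consequence of $\CL_{R_\lambda}g_\xi$ being symmetric together with the first two identities). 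These, plus the bidegree decomposition of $T^\pi$ used to produce the Chern part, are the computations I expect to fill the written proof; the rest is bookkeeping with the splitting $TM=\xi\oplus\R\langle R_\lambda\rangle$.
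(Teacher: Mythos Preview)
The paper does not contain a proof of this theorem: it is quoted verbatim from \cite{oh-wang1} as background (see the sentence immediately preceding the statement), and only the consequences recorded in Corollary~\ref{cor:connection} are used later. So there is no in-paper argument to compare your proposal against.

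That said, your outline is a sound route to the result and matches the formula $\nabla_Y R_\lambda=\tfrac12(\CL_{R_\lambda}J)JY$ recorded in Corollary~\ref{cor:connection} (your $-\tfrac12 J(\CL_{R_\lambda}J)Y$ is the same thing, via the anticommutation $J(\CL_{R_\lambda}J)+(\CL_{R_\lambda}J)J=0$). Two points worth tightening if you write this out in full. First, the ``Chern connection'' step: condition~(5), once polarized, says $T^\pi(JY_1,Y_2)=T^\pi(Y_1,JY_2)$, which is exactly the torsion normalization singling out the canonical Hermitian connection on $(\xi,d\lambda|_\xi,J)$ along $\xi$-directions; you should state this explicitly rather than invoking an unnamed lemma. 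Second, in the uniqueness argument you use $A_{R_\lambda}Y\in\xi$ to identify $(A^\pi)_{R_\lambda}Y$ with $A_YR_\lambda$; this needs the observation $A_{R_\lambda}R_\lambda=0$ (from axiom~(3)) together with skewness, which you have implicitly but should make visible. The ``main obstacle'' you flag---that the prescription of $\nabla_YR_\lambda$ forced by $J$-linearity in the Reeb direction is simultaneously compatible with $(\nabla^\pi)_{R_\lambda}$ preserving $g_\xi$---is indeed the heart of the existence proof, and the identities you list ($\CL_{R_\lambda}d\lambda=0$, $g_\xi$-symmetry of $\CL_{R_\lambda}J$) are exactly what is needed.
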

From this theorem, we see that the contact triad connection $\nabla$ canonically induces
a Hermitian connection $\nabla^\pi$ for the Hermitian vector bundle $(\xi, J, g_\xi)$, and we call it the \emph{contact Hermitian connection}. This connection will be used to study estimates for the $\pi$-energy in later sections.

Moreover, the following fundamental properties of the contact triad connection was
proved in \cite{oh-wang1}, which will be useful to perform tensorial calculations later.

\begin{cor}\label{cor:connection}
Let $\nabla$ be the contact triad connection. Then
\begin{enumerate}
\item For any vector field $Y$ on $M$,
\be\label{eq:nablaYX}
\nabla_Y R_\lambda = \frac{1}{2}(\CL_{R_\lambda}J)JY;
\ee
\item $\lambda(T|_\xi)=d\lambda$.
\end{enumerate}
\end{cor}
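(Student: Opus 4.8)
The plan is to deduce both identities directly from the six defining properties of the contact triad connection listed in Theorem~\ref{thm:connection}, exactly as in \cite{oh-wang1}; no new geometric input is needed beyond that theorem.

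\textbf{Identity (1).} First I would reduce to the case $Y\in\Gamma(\xi)$: since $\nabla_{R_\lambda}R_\lambda=0$ by property (3), both sides of the claimed formula depend only on the $\xi$-component of $Y$, and $\CL_{R_\lambda}J$ is an endomorphism of $\xi$, so it suffices to prove $\nabla_Y R_\lambda=\frac{1}{2}(\CL_{R_\lambda}J)(JY)$ for sections $Y$ of $\xi$. Two preliminary observations: because $\CL_{R_\lambda}\lambda=0$ one has $[R_\lambda,Y],[R_\lambda,JY]\in\Gamma(\xi)$; and because $\nabla g=0$ together with $\nabla_{R_\lambda}R_\lambda=0$ forces $\lambda(\nabla_{R_\lambda}Z)=0$, the operator $\nabla_{R_\lambda}$ preserves $\Gamma(\xi)$, so by the Hermitian property (4) it commutes with $J$ there. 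Now the torsion property (2) gives $0=T(R_\lambda,Y)=\nabla_{R_\lambda}Y-\nabla_Y R_\lambda-[R_\lambda,Y]$, i.e.
\[
\nabla_Y R_\lambda=\nabla_{R_\lambda}Y-[R_\lambda,Y],
\]
while property (6), $\nabla_Y R_\lambda=J\nabla_{JY}R_\lambda$, applied once more via (2) to $JY$ and then using $\nabla_{R_\lambda}(JY)=J\nabla_{R_\lambda}Y$ gives
\[
\nabla_Y R_\lambda=J\bigl(\nabla_{R_\lambda}(JY)-[R_\lambda,JY]\bigr)=-\nabla_{R_\lambda}Y-J[R_\lambda,JY].
\]
Adding the two displays eliminates $\nabla_{R_\lambda}Y$ and yields $2\nabla_Y R_\lambda=-[R_\lambda,Y]-J[R_\lambda,JY]$, which is precisely $(\CL_{R_\lambda}J)(JY)$ since $(\CL_{R_\lambda}J)Z=[R_\lambda,JZ]-J[R_\lambda,Z]$ with $Z=JY$.

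\textbf{Identity (2).} For $X,Y\in\Gamma(\xi)$ I would expand $T(X,Y)=\nabla_X Y-\nabla_Y X-[X,Y]$ and pair with $R_\lambda$ using the triad metric, so that $\lambda(\cdot)=g(R_\lambda,\cdot)$. Since $\nabla g=0$ and $\lambda(X)=\lambda(Y)=0$, one gets $\lambda(\nabla_X Y)=-g(\nabla_X R_\lambda,Y)$ and similarly for the symmetric term, while $\lambda([X,Y])=-d\lambda(X,Y)$; hence
\[
\lambda(T(X,Y))=d\lambda(X,Y)-g(\nabla_X R_\lambda,Y)+g(\nabla_Y R_\lambda,X).
\]
Substituting identity (1), the last two terms become $\frac{1}{2}\bigl(g_\xi((\CL_{R_\lambda}J)JY,X)-g_\xi((\CL_{R_\lambda}J)JX,Y)\bigr)$, so the claim reduces to the symmetry of the endomorphism $(\CL_{R_\lambda}J)J$ of $\xi$ with respect to $g_\xi$. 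This I would verify from two standard facts about the Reeb Lie derivative: differentiating $J^2=-\mathrm{id}$ along the $R_\lambda$-flow (which preserves $\xi$) gives $(\CL_{R_\lambda}J)J=-J(\CL_{R_\lambda}J)$, and differentiating the compatibility relation $g_\xi=d\lambda(\cdot,J\cdot)$ along the same flow, which also preserves $d\lambda$, gives $(\CL_{R_\lambda}g_\xi)(X,Y)=d\lambda(X,(\CL_{R_\lambda}J)Y)$; since the left-hand side is symmetric in $X,Y$, rewriting the right-hand side with the anti-commutation relation yields $g_\xi((\CL_{R_\lambda}J)JX,Y)=g_\xi(X,(\CL_{R_\lambda}J)JY)$. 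The two correction terms therefore cancel and $\lambda(T(X,Y))=d\lambda(X,Y)$.

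\textbf{Expected difficulty.} There is no real obstacle once Theorem~\ref{thm:connection} is granted — the argument is linear algebra on $\xi$ combined with Cartan calculus. The only places that demand care are the sign bookkeeping (the precise form of $g_\xi$ in terms of $d\lambda$ and $J$, and the sign conventions in $\CL_{R_\lambda}J$), and the systematic use of $\CL_{R_\lambda}\lambda=0$ and $\CL_{R_\lambda}d\lambda=0$ to ensure that all the brackets $[R_\lambda,\cdot]$ and covariant derivatives $\nabla_{R_\lambda}(\cdot)$ appearing in the computation genuinely lie in $\xi$, so that the $\xi$-projection may be dropped throughout.
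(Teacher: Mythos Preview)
Your argument is correct. Note, however, that the present paper does not actually give a proof of this corollary: it is quoted from \cite{oh-wang1} (``the following fundamental properties of the contact triad connection was proved in \cite{oh-wang1}''), so there is no in-text proof to compare against. Your derivation is precisely the kind of axiomatic verification one expects from the six defining properties in Theorem~\ref{thm:connection}, and it matches in spirit what is done in \cite{oh-wang1}: for (1) you combine axioms (2), (3), (4), (6) to eliminate $\nabla_{R_\lambda}Y$ and identify the remaining bracket terms with $(\CL_{R_\lambda}J)JY$; for (2) you feed (1) back into the torsion formula and invoke the symmetry of $(\CL_{R_\lambda}J)J$ on $(\xi,g_\xi)$, which you correctly obtain from $\CL_{R_\lambda}d\lambda=0$ and the anticommutation $(\CL_{R_\lambda}J)J=-J(\CL_{R_\lambda}J)$.

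One minor remark on presentation: when you say ``$\nabla g=0$ together with $\nabla_{R_\lambda}R_\lambda=0$ forces $\lambda(\nabla_{R_\lambda}Z)=0$,'' this holds for $Z\in\Gamma(\xi)$, not for arbitrary $Z$; the context makes this clear but it is worth stating. Likewise, the step $\nabla_{R_\lambda}(JY)=J\nabla_{R_\lambda}Y$ uses that $\nabla^\pi$ is a Hermitian connection on the bundle $\xi\to M$ in \emph{all} tangent directions, including $R_\lambda$, together with the fact you established that $\nabla_{R_\lambda}$ already preserves $\Gamma(\xi)$ so that $\pi\nabla_{R_\lambda}=\nabla_{R_\lambda}$ there. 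These are exactly the points you flagged under ``Expected difficulty,'' and you have handled them correctly.
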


We refer readers to \cite{oh-wang1} for more discussion on the contact triad connection and its relation with other related canonical type connections.

\subsection{$\epsilon$-regularity and interior density estimate}

Now we establish the elliptic a priori estimates for the boundary
value problem \eqref{eq:contacton-Legendrian-bdy-intro} which extends the interior estimates proved
in \cite{oh-wang2}.

The following $\epsilon$-regularity and interior density
estimate was proved in \cite[Corollary 5.2]{oh-wang2}.

\begin{thm}\label{thm:density}
There exist constants $C, \, \epsilon_0$ and $r_0 > 0$, depending only on $J$ and
the Hermitian metric $h$ on $\dot \Sigma$, such that for any
 $C^1$ contact instanton $w: \dot \Sigma \to M$ with
$$
E(r_0): = \frac{1}{2}\int_{D(r_0)} |dw|^2 \leq \epsilon_0,
$$
and discs $D(2r) \subset \operatorname{Int}\Sigma$ with $0 < 2r \leq r_0$,
$w$ satisfies
\be\label{eq:schoen's}
\max_{\sigma \in (0,r]} \left(\sigma^2 \sup_{D(r-\sigma)}
e(w)\right) \leq CE(r)
\ee
for all $0< r \leq r_0$. In particular, letting $\sigma = r/2$, we obtain
\be\label{eq:supeu}
\sup_{D(r/2)} |dw|^2 \leq \frac{4C E(r)}{r^2}
\ee
for all $r \leq r_0$.
\end{thm}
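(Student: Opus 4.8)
The statement is the interior $\epsilon$-regularity estimate for contact instantons together with its scale-invariant (Schoen-type) refinement; since $D(2r)\subset\operatorname{Int}\Sigma$ the Legendrian boundary plays no role here, and $w$ may be viewed as a contact instanton on a disk in $\C$. The plan has two ingredients: a Bochner-type differential inequality for the energy density and a mean-value (Moser) iteration. \emph{Step 1 (Bochner inequality).} I would first establish a constant $C_0=C_0(M,\lambda,J)$ such that every contact instanton satisfies, pointwise on its domain,
\[
\Delta\, e(w)\ \geq\ -\,C_0\bigl(e(w)+e(w)^2\bigr),
\]
where $\Delta$ is the Laplacian of the domain metric, normalized so that $\Delta f\le 0$ at an interior maximum of $f$. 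This follows by differentiating the two equations $\delbar^\pi w=0$ and $d(w^*\lambda\circ j)=0$ and carrying out a Weitzenb\"ock computation with the contact triad connection $\nabla$ of Theorem~\ref{thm:connection}: the quadratic term $e(w)^2$ is produced by the curvature of the Hermitian connection $\nabla^\pi$ on $\xi$ and by the torsion identity $\lambda(T|_\xi)=d\lambda$ of Corollary~\ref{cor:connection}, while the linear term collects the contributions of $\nabla R_\lambda=\frac12(\CL_{R_\lambda}J)JY$ and of $\CL_{R_\lambda}J$; since $\Image w$ is contained in a fixed compact set, all the geometric quantities entering here are uniformly bounded, which is why $C_0$ depends only on $(M,\lambda,J)$. (This is the substance of the Bochner estimates of \cite{oh-wang1,oh-wang2}.)

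\emph{Step 2 (mean-value inequality and conclusion).} From the differential inequality and the smallness hypothesis I would derive a scale-$\rho$ mean-value bound: there are $\epsilon_0>0$ and $C=C(J,h)$ such that whenever $D(x,\rho)$ lies in the domain and $\int_{D(x,\rho)}e(w)\le\epsilon_0$,
\[
\sup_{D(x,\rho/2)}e(w)\ \leq\ \frac{C}{\rho^2}\int_{D(x,\rho)}e(w).
\]
This is Moser iteration applied to $\Delta e(w)\ge -C_0(e(w)+e(w)^2)$: multiplying by $e(w)^{p-1}$ times a cutoff and integrating by parts, the linear term is routine, while the dangerous term $\int e(w)^{p+1}(\text{cutoff})$ is absorbed into the good gradient term by the two-dimensional Sobolev inequality at the cost of a factor $\int_{D(x,\rho)}e(w)\le\epsilon_0$ — small enough to be absorbed once $\epsilon_0$ has been fixed small. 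Iterating over $p$ produces the displayed $L^\infty$-bound. Now \eqref{eq:schoen's} is immediate: for $0<r\le r_0$, $\sigma\in(0,r]$ and $x\in D(r-\sigma)$ we have $D(x,\sigma)\subset D(r)$, hence $\int_{D(x,\sigma)}e(w)$ is at most a fixed multiple of $E(r)\le E(r_0)$, which we may assume $\le\epsilon_0$, so the mean-value bound gives $\sigma^2 e(w)(x)\le C'E(r)$; taking the supremum over $x$ and then over $\sigma$ yields $\max_{\sigma\in(0,r]}\sigma^2\sup_{D(r-\sigma)}e(w)\le C'E(r)$, and $\sigma=r/2$ is exactly \eqref{eq:supeu}.

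\emph{Main obstacle, and a variant.} The crux — and the reason smallness of $\int_{D(r_0)}e(w)$ cannot be dropped — is the genuinely quadratic term $e(w)^2$ in the Bochner inequality, which defeats any purely linear elliptic estimate. Accordingly the work is concentrated in (i) deriving that inequality cleanly in the contact, non-symplectic and non-exact setting, which requires keeping careful track of the torsion and $\CL_{R_\lambda}J$ corrections through the contact triad connection, and (ii) the absorption of the quadratic term in the Moser iteration while keeping the resulting constant dependent only on $J$ and $h$. An equivalent, more classical packaging of Step 2 is the Schoen point-picking argument: if \eqref{eq:schoen's} failed one would rescale $w$ at the point where $\operatorname{dist}_h(x,\del D(r))^2\,e(w)(x)$ attains its maximum, use the conformal invariance of the two-dimensional $L^2$-energy together with interior analogues of the coercive estimates of Theorems~\ref{thm:local-W12-intro}--\ref{thm:local-regularity-intro} to extract a $C^\infty_{\mathrm{loc}}$-limit contact instanton on $\R^2$ of energy $\le\epsilon_0$ with $e(\cdot)(0)=1$, and then the mean-value bound above applied to this limit would force its energy density to decay to zero, a contradiction.
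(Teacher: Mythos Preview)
Your proposal is correct and follows essentially the same route as the paper: the paper does not argue anew but simply quotes from \cite{oh-wang2} the Bochner-type differential inequality
\[
\Delta e(w)\le Ce(w)^2+\|K\|_{L^\infty(\dot\Sigma)}e(w)
\]
(its Theorem 5.1, restated here just after the statement of Theorem \ref{thm:density}) and then cites the $\epsilon$-regularity/density estimate as Corollary 5.2 of the same reference; your Step 1 is exactly that inequality (with the opposite sign convention for $\Delta$) and your Step 2 (Moser iteration/mean-value, with the Schoen point-picking variant) is the standard mechanism that \cite{oh-wang2} uses to pass from the differential inequality to \eqref{eq:schoen's}.
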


The proof of this theorem is a consequence of the following differential
inequality proved therein
$$
\Delta e(w)\leq Ce(w)^2+\|K\|_{L^\infty(\dot\Sigma)}e(w),
$$
with
$$
C=2\|\CL_{R_\lambda}J\|^2_{C^0(M)}+\|\nabla^\pi(\CL_{R_\lambda}J)\|_{C^0(M)}+\|\text{\rm Ric}\|_{C^0(M)}+1
$$
which is a positive constant independent of $w$. (See \cite[Theorem 5.1]{oh-wang2}.)
Next we prove the local a priori estimates.

\subsection{Local coercive $W^{2,2}$ estimate}

The following pointwise inequality is derived in \cite{oh-wang2}.

\begin{lem}[Equation (5.13), \cite{oh-wang2}]\label{lem:second:derivative}
Let $w$ be any contact instanton $w: (\dot \Sigma,j) \to (M; \lambda, J)$ i.e., any
map satisfying the equation
\be\label{eq:contacton}
\delbar^\pi w = 0 , \quad d(w^*\lambda \circ j) = 0.
\ee
Then we have
\be\label{eq:second-derivative}
|\nabla(dw)|^2 \leq C_1 |dw|^4 - 4K |dw|^2 - 2\Delta |dw|^2
\ee
\end{lem}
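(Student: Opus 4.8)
\textbf{Proof proposal for Lemma \ref{lem:second:derivative}.}

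The plan is to establish the pointwise Bochner-type inequality \eqref{eq:second-derivative} by a Weitzenb\"ock computation for the $\xi$-valued one-form $d^\pi w$ and the Reeb component of $dw$, exploiting the contact triad connection and the structure of the contact instanton equation \eqref{eq:contacton}. First I would split $dw = d^\pi w + w^*\lambda \otimes (R_\lambda \circ w)$ according to the triad splitting $TM = \xi \oplus \R\langle R_\lambda\rangle$, so that $|dw|^2 = |d^\pi w|^2 + |w^*\lambda|^2$; since the contact triad metric is $g = g_\xi + \lambda\otimes\lambda$ and $\nabla g = 0$ (Theorem \ref{thm:connection}(1)), the Laplacian of $|dw|^2$ splits into a $\xi$-part and a Reeb part that can be handled separately and then recombined. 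For the $\xi$-part, the key is that $\delbar^\pi w = 0$ makes $d^\pi w$ a (pseudo)holomorphic section of $\Hom(T\dot\Sigma, w^*\xi)$ with respect to the Hermitian connection $\nabla^\pi$ from Theorem \ref{thm:connection}(4), so one gets a Bochner identity of Eells--Sampson type in which the curvature terms are controlled by $\|\text{\rm Ric}\|_{C^0(M)}$ and by the curvature $R^\pi$ of $\nabla^\pi$; the latter, together with torsion terms, is what produces the constant $C_1$ depending on $\|\CL_{R_\lambda}J\|_{C^0(M)}$ and $\|\nabla^\pi(\CL_{R_\lambda}J)\|_{C^0(M)}$.

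The second main step is to handle the coupling coming from the non-integrability of $\xi$, i.e.\ the fact that $\nabla$ has torsion with $\lambda(T|_\xi) = d\lambda$ and $\nabla_Y R_\lambda = \frac12(\CL_{R_\lambda}J)JY$ (Corollary \ref{cor:connection}). These identities let me express all covariant derivatives of the Reeb vector field along $w$ in terms of first derivatives of $w$ times the fixed tensor $\CL_{R_\lambda}J$, so every ``error'' term that appears when commuting $\nabla$ past the splitting is pointwise bounded by $C|dw|^2$ times a lower-order factor, contributing to the $C_1|dw|^4$ term after Cauchy--Schwarz. Here the second equation $d(w^*\lambda\circ j) = 0$ enters crucially: differentiating it and using $\delbar^\pi w = 0$ gives an equation for the Reeb component $w^*\lambda$ (schematically a harmonic-type equation with a quadratic right-hand side in $dw$), which is exactly what is needed to bound $\Delta|w^*\lambda|^2$ from below and to identify that the only genuinely bad terms are quartic in $dw$, linear in the Gauss curvature $K$ of $(\dot\Sigma, h)$, and the exact term $-2\Delta|dw|^2$ that appears on the right-hand side (this last term is not estimated away but kept, since \eqref{eq:second-derivative} is a differential inequality used later under an integration by parts).

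Concretely, I would: (i) write the Weitzenb\"ock formula $\frac12\Delta|d^\pi w|^2 = \langle \nabla^\pi{}^*\nabla^\pi(d^\pi w), d^\pi w\rangle - |\nabla^\pi(d^\pi w)|^2 + (\text{curvature of }\dot\Sigma) + (\text{curvature }R^\pi)$; (ii) use $\delbar^\pi w = 0$ to replace the rough Laplacian term by curvature and torsion contractions, reorganizing so that $|\nabla(dw)|^2$ (with the full triad connection, including the Reeb directions) appears with the correct sign; (iii) bound every curvature/torsion term pointwise by either $C_1|dw|^4$ (the torsion-squared and $R^\pi$ terms, via Corollary \ref{cor:connection} and Theorem \ref{thm:connection}(5),(6)), by $-4K|dw|^2$ (the domain-curvature term, tracking the factor $4$ coming from the real/complex normalization on a Riemann surface), or into the exact term $-2\Delta|dw|^2$; (iv) add the Reeb-component computation, which by the two equations in \eqref{eq:contacton} and Corollary \ref{cor:connection}(2) produces only terms of the same three types. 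The main obstacle I anticipate is bookkeeping step (iii): keeping exact track of all contractions with the torsion tensor and the curvature $R^\pi$ so that the coefficient of $|dw|^4$ collapses into a single constant $C_1$ with the asserted dependence, and verifying that the mixed $\xi$/Reeb cross-terms either cancel (thanks to $\del^\nabla_Y R_\lambda = 0$ in Theorem \ref{thm:connection}(6) and $T(R_\lambda,\cdot) = 0$ in (2)) or are absorbable; the sign and the precise constant $4$ in front of $K$ also require care, but these are the kind of computations already carried out in \cite{oh-wang2} and I would follow that bookkeeping closely.
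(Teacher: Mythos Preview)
The paper does not actually prove this lemma: it is quoted verbatim from \cite{oh-wang2} (as the header ``Equation (5.13), \cite{oh-wang2}'' indicates), and the present paper simply uses \eqref{eq:second-derivative} as a black box in the proof of Theorem~\ref{thm:local-W12}. So there is no ``paper's own proof'' to compare against here.

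That said, your outline is the correct strategy and matches what \cite{oh-wang2} does: a Weitzenb\"ock/Bochner computation for $d^\pi w$ using the contact triad connection, with the torsion identities from Theorem~\ref{thm:connection} and Corollary~\ref{cor:connection} controlling the cross-terms between the $\xi$- and Reeb-components, and the second equation $d(w^*\lambda\circ j)=0$ supplying the needed sub-Laplacian control on $|w^*\lambda|^2$. The dependence of $C_1$ on $\|\CL_{R_\lambda}J\|_{C^0}$, $\|\nabla^\pi(\CL_{R_\lambda}J)\|_{C^0}$, and $\|\text{Ric}\|_{C^0}$ that you identify is exactly what is recorded just above Theorem~\ref{thm:density} in this paper (quoting \cite[Theorem~5.1]{oh-wang2}). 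Your anticipated obstacle---the bookkeeping in step (iii)---is real but purely computational, and the relevant identities (especially $T(R_\lambda,\cdot)=0$ and $\del^\nabla_Y R_\lambda=0$) are precisely the ones that make the cancellations work in \cite{oh-wang2}.
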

We recall $|dw|^2 = |d^\pi w|^2 + |w^*\lambda|^2 = |\del^\pi w|^2 + |w^*\lambda|^2$
for the contact instanton $w$ which satisfies $\delbar^\pi w = 0$.

We now establish the following a priori estimate

\begin{thm}\label{thm:local-W12} Let $w: \R \times [0,1] \to M$ satisfy \eqref{eq:contacton-Legendrian-bdy-intro}.
Then for any relatively compact domains $D_1$ and $D_2$ in
$\dot\Sigma$ such that $\overline{D_1}\subset D_2$, we have
$$
\|dw\|^2_{W^{1,2}(D_1)}\leq C_1 \|dw\|^2_{L^2(D_2)} + C_2 \|dw\|^4_{L^4(D_2)} + C_3 \|dw\|^3_{L^3(\del D_2)}
$$
where $C_1, \ C_2$ are some constants which
depend only on $D_1$, $D_2$ and $(M,\lambda, J)$ and $C_3$ is a
constant which also depends on $R_i$ with $w(\del D_2) \subset R_i$ as well.
\end{thm}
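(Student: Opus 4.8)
The plan is a Bochner/Weitzenb\"ock-type argument: I integrate the pointwise inequality of Lemma~\ref{lem:second:derivative} against the square of a cut-off function, and control the one boundary integral that this produces by combining the Legendrian condition with both components of the contact instanton equation and with the identities for the contact triad connection. The hard part will be exactly this last step --- showing the boundary integrand is cubic in $|dw|$.

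\smallskip
\noindent\emph{Step 1: reduction to a boundary estimate.} Fix $\chi\in C^\infty(\dot\Sigma)$ with $\chi\equiv 1$ on $D_1$, $\supp\chi\subset D_2$, and $\chi\equiv 0$ near the part of $\partial D_2$ lying in $\Int\Sigma$ (possible since $\overline{D_1}\subset D_2$); thus $\chi$ may be nonzero only along $\partial\Sigma\cap D_2\subset\partial D_2$. Multiplying \eqref{eq:second-derivative} by $\chi^2$, integrating over $D_2$, and integrating the $\Delta|dw|^2$-term by parts once gives
\begin{align*}
\int_{D_2}\chi^2|\nabla(dw)|^2 \le {}& C_1\int_{D_2}\chi^2|dw|^4 - 4\int_{D_2}\chi^2 K|dw|^2 \\
& {}+ 2\int_{D_2}\nabla(\chi^2)\cdot\nabla|dw|^2 - 2\int_{\partial\Sigma\cap D_2}\chi^2\,\partial_\nu|dw|^2 ,
\end{align*}
with no contribution from the part of $\partial D_2$ in $\Int\Sigma$ since $\chi$ vanishes there. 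Expanding $\nabla(\chi^2)=2\chi\nabla\chi$ and $\nabla|dw|^2=2\langle\nabla(dw),dw\rangle$ and using Young's inequality, the bulk gradient term is $\le\tfrac12\int_{D_2}\chi^2|\nabla(dw)|^2 + C\|\nabla\chi\|_\infty^2\|dw\|_{L^2(D_2)}^2$, the first summand being absorbed on the left. Together with $-4\int_{D_2}\chi^2 K|dw|^2\le 4\|K\|_\infty\|dw\|_{L^2(D_2)}^2$, with $C_1\int_{D_2}\chi^2|dw|^4\le C_1\|dw\|_{L^4(D_2)}^4$, and with $\|dw\|_{W^{1,2}(D_1)}^2\le\|dw\|_{L^2(D_2)}^2+\int_{D_2}\chi^2|\nabla(dw)|^2$, the whole statement reduces to the pointwise bound
$$
\bigl|\partial_\nu|dw|^2\bigr| \le C_{R_i}\,|dw|^3 \qquad\text{along }\partial\Sigma\cap D_2 ,
$$
with $C_{R_i}$ allowed to depend on $(M,\lambda,J)$ and on the second fundamental form of $R_i$. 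I would first do this for smooth $w$ (the a priori regime), the $W^{1,4}_{\mathrm{loc}}$ case following by the difference-quotient argument underlying Theorem~\ref{thm:local-regularity-intro}; I would also take $h$ flat in isothermal coordinates near $\partial\Sigma$, harmless up to comparison constants on the compact domains involved, which deletes a possible lower-order $O(|dw|^2)$ term in Step~2 (and were it present it would be absorbed via a weighted trace inequality, i.e.\ the fundamental theorem of calculus in the normal direction).

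\smallskip
\noindent\emph{Step 2: the boundary estimate.} Work in isothermal coordinates $(\tau,t)$ near a boundary point, with $\partial\Sigma=\{t=0\}$, $\nu=-\partial_t$, $j\partial_\tau=\partial_t$, and $w(\tau,0)\in R_i$. As $R_i$ is Legendrian, $\lambda(\partial_\tau w)\equiv 0$ along $\{t=0\}$, so $(\partial_\tau w)^\pi=\partial_\tau w$ there, and $T_wR_i$ is a Lagrangian subspace of $(\xi_w,d\lambda)$, whence $J\,T_wR_i\perp T_wR_i$ for the triad metric. The equation $\delbar^\pi w=0$ says $(\partial_t w)^\pi=J(\partial_\tau w)^\pi$, and $d(w^*\lambda\circ j)=0$ says $\partial_t(\lambda(\partial_t w))=-\partial_\tau(\lambda(\partial_\tau w))$, which vanishes on $\{t=0\}$ since $\lambda(\partial_\tau w)$ is identically zero there. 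From $|dw|^2=2|(\partial_\tau w)^\pi|^2+\lambda(\partial_\tau w)^2+\lambda(\partial_t w)^2$ (using $J$-invariance of $|\cdot|$ on $\xi$), only the first term contributes to $\partial_t|dw|^2$ at $t=0$, so $\partial_\nu|dw|^2=-4\langle\nabla_{\partial_t}(\partial_\tau w)^\pi,(\partial_\tau w)^\pi\rangle$ on $\partial\Sigma$. Then I would commute covariant derivatives, $\nabla^\pi_{\partial_t}(\partial_\tau w)^\pi=\nabla^\pi_{\partial_\tau}(\partial_t w)^\pi+E$ with $|E|\le C|dw|^2$ (the error $\pi T^\nabla(\partial_t w,\partial_\tau w)$ plus $w^*\lambda$-terms, $O(|dw|^2)$ by Corollary~\ref{cor:connection}); use $\nabla^\pi J=0$ to write $\nabla^\pi_{\partial_\tau}(\partial_t w)^\pi=J\nabla^\pi_{\partial_\tau}(\partial_\tau w)^\pi$ along the boundary; and split $\nabla^\pi_{\partial_\tau}\partial_\tau w=(\nabla_{\partial_\tau}\partial_\tau w)^{T R_i}+\pi B_{R_i}(\partial_\tau w,\partial_\tau w)$ into the $T_wR_i$-part and the normal second fundamental form. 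Pairing with $(\partial_\tau w)^\pi=\partial_\tau w\in T_wR_i$: the $T_wR_i$-part drops out by $J\,T_wR_i\perp T_wR_i$; the second fundamental form gives $|\langle J\pi B_{R_i}(\partial_\tau w,\partial_\tau w),\partial_\tau w\rangle|\le\|B_{R_i}\|_\infty|dw|^3$; and $\langle E,(\partial_\tau w)^\pi\rangle=O(|dw|^3)$. Hence $|\partial_\nu|dw|^2|\le C_{R_i}|dw|^3$ along $\partial\Sigma$, as required.

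\smallskip
\noindent\emph{Step 3: conclusion.} Summing over the finitely many arcs of $\partial\Sigma$ with $w(\partial\Sigma)\subset R_i$ bounds the boundary integral by $C_3\|dw\|_{L^3(\partial D_2)}^3$ with $C_3$ depending also on $\max_i\|B_{R_i}\|_\infty$, and collecting Step~1 then yields the asserted inequality, $C_1,C_2$ depending only on $\chi$ (hence on $D_1,D_2$), on $\|K\|_\infty$, and on the constant of Lemma~\ref{lem:second:derivative}, i.e.\ only on $(M,\lambda,J)$. One recurring technicality is that the contact triad connection is metric but not torsion-free, so every commutation of covariant derivatives and every integration by parts carries a torsion term; these are always $T^\nabla(dw,dw)=O(|dw|^2)$ and, paired against $dw$ in the boundary integrand, produce only further $O(|dw|^3)$ terms, hence are harmless. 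The genuinely delicate point, and the one I expect to be the main obstacle, is Step~2 --- pinning down which terms of $\partial_\nu|dw|^2$ survive the restriction to $\partial\Sigma$ and checking the survivor is cubic; this is where the Legendrian condition, both components of the contact instanton equation, and the triad-connection identities of Theorem~\ref{thm:connection} and Corollary~\ref{cor:connection} are all needed at once.
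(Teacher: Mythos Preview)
Your proposal is correct and follows essentially the same route as the paper: multiply the Bochner-type inequality of Lemma~\ref{lem:second:derivative} by $\chi^2$, integrate, absorb the bulk terms by Young's inequality, and reduce the remaining boundary integral to a pointwise cubic bound $|\partial_\nu|dw|^2|\le C_{R_i}|dw|^3$, which is then extracted from the Legendrian condition together with $\delbar^\pi w=0$, $\nabla^\pi J=0$, the torsion identities of the triad connection, and the second fundamental form of $R_i$. The one genuine difference is in how you dispatch the Reeb contribution $\partial_t\bigl(\lambda(\partial_t w)^2\bigr)$: you observe directly from $d(w^*\lambda\circ j)=0$ that $\partial_t\lambda(\partial_t w)=-\partial_\tau\lambda(\partial_\tau w)$, which vanishes on $\partial\Sigma$ since $\lambda(\partial_\tau w)\equiv 0$ there, so this term drops out exactly; the paper instead keeps the term, expands $\partial_t(\lambda(\partial_t w))$ via the product rule and a chain of torsion identities (including the lemma $\lambda(\nabla_{(\partial_t w)^\pi}(\partial_t w)^\pi)=-\lambda(\nabla_{(\partial_\tau w)^\pi}(\partial_\tau w)^\pi)$), and only then bounds it by $C|dw|^3$. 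Your shortcut is cleaner and buys you the explicit use of the second equation of the contact instanton system at the boundary, while the paper's longer computation has the virtue of making transparent exactly which triad-connection axioms (notably $T^\pi(JY,Y)=0$ and $\lambda(T|_\xi)=d\lambda$) are doing the work.
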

\begin{proof} The proof is similar to that of \cite[Proposition 5.3]{oh-wang2} given in
\cite[Appendix C]{oh-wang2} and so we just indicate necessary modification needed to
handle the Legendrian boundary condition in the estimates.

We have only to consider the case of a pair of semi-discs $D_1,\, D_2 \subset \dot \Sigma$
with $\overline D_1 \subset D_2$ such that $\del D_2 \subset \del D_1 \subset \del \dot \Sigma$.
(The open disc cases of $D_1 \subset D_2$ are already treated in \cite[Appendix C]{oh-wang2}.)

For the pair of given domains $D_1$ and $D_2$, we choose another domain $D$ such that
$\overline D_1 \subset D \subset \overline D \subset D_2$ and a smooth cut-off function $\chi:D_2\to \R$ such that
$\chi\geq 0$ and
$\chi\equiv 1$ on $\overline{D_1}$, $\chi\equiv 0$ on $D_2-D$.
Multiplying  \eqref{eq:second-derivative} by $\chi^2$ and integrating over $D$, we get
\beastar
\int_{D_1}|\nabla(dw)|^2&\leq&\int_{D}\chi^2|\nabla(dw)|^2\\
&\leq&C_1\int_{D}\chi^2|dw|^4-4\int_{D}K\chi^2|dw|^2-2\int_{D}\chi^2\Delta e\\
&\leq&C_1\int_{D_2}|dw|^4+4\|K\|_{L^\infty(\dot\Sigma)}\int_{D_2}|dw|^2-2\int_{D}\chi^2\Delta e
\eeastar
where $C_1$ is the same constant as the one appearing in \eqref{eq:second-derivative}.

We now deal with the last term $\int_{D_2}\chi^2 \Delta e$.
We rewrite
\beastar
\chi^2\Delta e\, dA&=&*(\chi^2 \Delta e)=\chi^2 *\Delta e
=-\chi^2 d*de\\
&=&-d(\chi^2 *de)+2\chi d\chi\wedge (*de).
\eeastar
By the same argument as in \cite[p.677]{oh-wang2}, we get the estimate
\beastar
\left|\int_{D}\chi d\chi\wedge(*de)\right| \leq \frac{1}{\epsilon}\int_{D}\chi^2|\nabla(dw)|^2\,dA+\epsilon\|d\chi\|_{C^0(D)}^2\int_{D}|dw|^2\,dA
\eeastar
for any $\epsilon > 0$.

We now estimate the integral
$$
\int_D -d(\chi^2 * de) = \int_{\del D} - \chi^2 *de
$$
by Stokes' formula. \emph{This is where the current
estimate deviates from that of \cite[p.677]{oh-wang2}.}
\begin{lem}\label{lem:Neunman-bdy} $w$ satisfies the Neunman boundary condition,
i.e., $\frac{\del w}{\del t} \perp TR_i$.
\end{lem}
\begin{proof} Since $R_i$ are Legendrian, we have
$$
\frac{\del w}{\del \tau} = \left(\frac{\del w}{\del \tau}\right)^\pi.
$$
Since $\delbar^\pi w= 0$, we have
$$
- J\left(\frac{\del w}{\del t}\right)^\pi = \left(\frac{\del w}{\del \tau}\right)^\pi = \frac{\del w}{\del \tau} \in TR_i.
$$
Therefore $\left(\frac{\del w}{\del t}\right)^\pi \in NR_i$. Since $R_\lambda \in NR_i$,
this proves
$$
\frac{\del w}{\del t} = \left(\frac{\del w}{\del t}\right)^\pi + \lambda\left(\frac{\del w}{\del t} \right) R_\lambda
$$
is contained in $NR_i$.
\end{proof}

\begin{rem}
We recall that contact triad connection preserves the metric but may have
nonzero torsion, i.e., is not the Levi-Civita connection of the triad metric.
The definition of the second fundamental form of a submanifold $S \subset (M,g)$ for
such a Riemannian connection is still a bilinear map
$
B: TS \times TS \to NS
$
defined by the symmetric average
\be\label{eq:B}
B(X_1,X_2) = \frac12((\nabla_{X_1} X_2)^\perp + (\nabla_{X_2}X_1)^\perp).
\ee
\end{rem}

Since
$$
\frac{\del w}{\del \tau} = \left(\frac{\del w}{\del \tau}\right)^\pi = - J \left(\frac{\del w}{\del t}\right)^\pi,
$$
we have
$$
e := \left|\frac{\del w}{\del \tau}\right|^2 + \left|\frac{\del w}{\del t}\right|^2
= 2 \left|\frac{\del w}{\del \tau}\right|^2 + \left|\lambda\left(\frac{\del w}{\del t}\right)\right|^2.
$$
We then compute
\be\label{eq:*de}
*de|_{\del D} = -\frac{de}{dt} = -4 \left\langle \nabla_t \frac{\del w}{\del \tau},\frac{\del w}{\del \tau} \right\rangle
- 2 \frac{\del}{\del t}\left(\lambda\left(\frac{\del w}{\del t}\right)\right) \cdot \lambda\left(\frac{\del w}{\del t}\right).
\ee
Since $w$ satisfies Neunman boundary condition,
the first term becomes
\be\label{eq:-B}
- 4\left \langle B\left(\frac{\del w}{\del \tau},\frac{\del w}{\del \tau}\right),\frac{\del w}{\del t}\right \rangle
\ee
with the second fundament form \eqref{eq:B}.
For the second, we compute
$$
\frac{\del}{\del t}\left(\lambda\left(\frac{\del w}{\del t}\right)\right)
= \nabla_t \lambda\left(\frac{\del w}{\del t}\right) + \lambda\left(\nabla_t \frac{\del w}{\del t}\right).
$$
We write
$$
\frac{\del w}{\del t} = \left(\frac{\del w}{\del t}\right)^\pi + \lambda\left(\frac{\del w}{\del t}\right)R_\lambda
$$
and then
$$
\nabla_t \frac{\del w}{\del t} = \nabla_{\left(\frac{\del w}{\del t}\right)^\pi}
\left(\frac{\del w}{\del t}\right) +
\lambda\left(\frac{\del w}{\del t}\right) \nabla_{R_\lambda} \frac{\del w}{\del t}.
$$
Using the torsion property and $\nabla_{R_\lambda}X \in \xi$ for all $X$ (see Theorem \ref{thm:connection}
(2) and (3) respectively),
we compute
$$
\lambda\left(\nabla_{R_\lambda}\frac{\del w}{\del t}\right) =
\lambda\left(\nabla_{\frac{\del w}{\del t}}R_\lambda\right) = 0.
$$
Therefore using these vanishing and the torsion properties of $\nabla$ again, we derive
\beastar
\lambda\left(\nabla_t \frac{\del w}{\del t}\right) & = &
\lambda\left(\nabla_{\left(\frac{\del w}{\del t}\right)^\pi} \left(\frac{\del w}{\del t}\right)\right)\\
& = & \lambda\left(\nabla_{\frac{\del w}{\del t}}\left(\frac{\del w}{\del t}\right)^\pi
+ T\left(\left(\frac{\del w}{\del t}\right)^\pi,\frac{\del w}{\del t}\right)\right)
=\lambda\left(\nabla_{\frac{\del w}{\del t}}\left(\frac{\del w}{\del t}\right)^\pi\right)\\
& = & \lambda\left(\nabla_{\left(\frac{\del w}{\del t}\right)^\pi}\left(\frac{\del w}{\del t}\right)^\pi\right)
+ \lambda\left(\lambda\left(\frac{\del w}{\del t}\right) \nabla_{R_\lambda}\left(\frac{\del w}{\del t}\right)^\pi
\right)\\
& = &
\lambda\left(\nabla_{\left(\frac{\del w}{\del t}\right)^\pi} \left(\frac{\del w}{\del t}\right)^\pi\right)
= - \lambda\left(\nabla_{\left(\frac{\del w}{\del \tau}\right)^\pi} \left(\frac{\del w}{\del \tau}\right)^\pi\right)\\
& = & \lambda \left(B\left(\left(\frac{\del w}{\del \tau}\right)^\pi, \left(\frac{\del w}{\del \tau}\right)^\pi\right)\right)
\eeastar
on $\del D$. Here  the penultimate equality follows from the following lemma
\begin{lem}
$$
\lambda\left(\nabla_{\left(\frac{\del w}{\del t}\right)^\pi} \left(\frac{\del w}{\del t}\right)^\pi\right)
= - \lambda\left(\nabla_{\left(\frac{\del w}{\del \tau}\right)^\pi} \left(\frac{\del w}{\del \tau}\right)^\pi\right).
 $$
 \end{lem}
 \begin{proof} From the equation
$$
(\delbar w)^\pi\left(\frac{\del}{\del \tau}\right) = 0
$$
we get $\frac{\del w}{\del t} = J \frac{\del w}{\del \tau}$. Therefore
\beastar
\lambda\left(\nabla_{\left(\frac{\del w}{\del t}\right)^\pi} \left(\frac{\del w}{\del t}\right)^\pi\right)
& = &\lambda\left(\nabla_{J\left(\frac{\del w}{\del \tau}\right)^\pi} J \left(\frac{\del w}{\del \tau}\right)^\pi\right)
= \lambda\left(J \nabla_{J\left(\frac{\del w}{\del \tau}\right)^\pi} \left(\frac{\del w}{\del \tau}\right)^\pi\right)\\
& = & \lambda\left(J\left(\nabla_{\left(\frac{\del w}{\del \tau}\right)^\pi} J\left(\frac{\del w}{\del \tau}\right)^\pi
+  T\left(J\left(\frac{\del w}{\del \tau}\right)^\pi,\left(\frac{\del w}{\del \tau}\right)^\pi\right)\right)\right)\\
& = & \lambda\left(J^2 \nabla_{\left(\frac{\del w}{\del \tau}\right)^\pi} \left(\frac{\del w}{\del \tau}\right)^\pi\right)
= -\lambda\left(\nabla_{\left(\frac{\del w}{\del \tau}\right)^\pi} \left(\frac{\del w}{\del \tau}\right)^\pi\right)
\eeastar
where we repeatedly use $\nabla_Y J = 0$ for any $Y \in \xi$ and also use
 the torsion property \eqref{eq:TJYYxi} for the penultimate equality.
This finishes the proof.
\end{proof}

This proves
$$
\left|\lambda\left(\nabla_t \frac{\del w}{\del t}\right)\right|
\leq \|\lambda\|_{C^0}\|B\|_{C^0}\left|\frac{\del w}{\del t}\right|^2.
$$
On the other hand, we compute
$$
\left|\nabla_t \lambda\left(\frac{\del w}{\del t}\right)\right| \leq \|\lambda\|_{C^1}\left|\frac{\del w}{\del t}\right|^2.
$$

By substituting these and \eqref{eq:-B} into \eqref{eq:*de}, we obtain
\begin{lem}
\beastar
|*de|_{\del D}| & \leq & 4 \left|\left \langle B\left(\frac{\del w}{\del \tau},\frac{\del w}{\del \tau}\right),\frac{\del w}{\del t}\right \rangle\right| + \|\lambda\|_{C^0}^2\|B\|_{C^0}\left|\frac{\del w}{\del t}\right|^3
+ \|\lambda\|_{C^1}\|\lambda\|_{C^0} \left|\frac{\del w}{\del t}\right|^3 \\
& \leq & C_3 |dw|^3_{\del D}
\eeastar
for $C_3: = 4\|B\|_{C^0} +  \|\lambda\|_{C^0}^2\|B\|_{C^0} + \|\lambda\|_{C^1}\|\lambda\|_{C^0}$ where
$B = B_i$ is the second fundamental form of $R_i$.
\end{lem}

Then we can sum all the estimates above and get
\beastar
\int_{D}\chi^2|\nabla(dw)|^2
&\leq& \int_D\frac{2\chi^2}{\epsilon}|\nabla(dw)|^2\\
&{}&+\left(4\|K\|_{L^\infty(\dot\Sigma)}+2\|d\chi\|_{C^0(D)}\epsilon\right)\int_{D_2}|dw|^2\\
&{}&+C_1\int_{D_2}|dw|^4 +  C_3 \int_{\del D} |dw|^3
\eeastar
We take $\epsilon=4$. Then
\beastar
&{}&\int_{D_1}|\nabla(dw)|^2\leq\int_{D}\chi^2|\nabla(dw)|^2\nonumber\\
&\leq&\left(8\|K\|_{L^\infty(\dot\Sigma)}+16\|d\chi\|^2_{C^0(D)}\right)\int_{D_2}|dw|^2
+2C_1\int_{D_2}|dw|^4 + C_3 \int_{\del D}|dw|^3 .\nonumber\\
\eeastar
By setting
\beastar
C_1(D_1,D_2) & = & 8\|K\|_{L^\infty(\dot\Sigma)}+16\|d\chi\|^2_{C^0(D)}\\
C_3(D_1,D_3) & = & 4\|B\|_{C^0} +  \|\lambda\|_{C^0}^2\|B\|_{C^0} + \|\lambda\|_{C^1}\|\lambda\|_{C^0}
\eeastar
and $C_2 = C_2(D_1,D_2) = 2C_1$ the constant given in \eqref{eq:second-derivative},
we have finished the proof.
\end{proof}

\section{$C^{k,\alpha}$ coercive estimates for $k \geq 1$}
\label{sec:Wk+22-estimates}

Once we have established $W^{2,2}$ estimate, we could proceed with the $W^{k+2,2}$ estimate $k \geq 1$
inductively as in \cite[Section 5.2]{oh-wang2}.
The effect of the Legendrian boundary condition on
the higher derivative estimate is not quite straightforward, although it should be doable.
Instead we take an easier path by expressing the following fundamental equation in
the isothermal coordinates of $(\dot \Sigma,j)$.

\begin{thm}[Theorem 4.2, \cite{oh-wang1}]
Let $w$ satisfy $\delbar^\pi w=0$. Then
\be\label{eq:fundamental}
d^{\nabla^\pi}(d^\pi w) = -w^*\lambda\circ j \wedge\left( \frac{1}{2}(\CL_{R_\lambda}J)\, d^\pi w\right).
\ee
\end{thm}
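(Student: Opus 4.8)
The plan is to deduce \eqref{eq:fundamental} from the structural properties of the contact triad connection $\nabla$ collected in Theorem \ref{thm:connection} and Corollary \ref{cor:connection}, together with the first-order constraint $\delbar^\pi w=0$. Both sides of \eqref{eq:fundamental} are $w^*\xi$-valued $2$-forms on the surface, so it suffices to check the identity after evaluating on the pair $(\del_s,\del_t)$ of a local isothermal coordinate $z=s+\sqrt{-1}\,t$ on $\dot\Sigma$, in which $j\del_s=\del_t$. I would decompose $dw=d^\pi w+(w^*\lambda)\,R_\lambda$ along $TM=\xi\oplus\R\langle R_\lambda\rangle$ and set $\zeta:=d^\pi w(\del_s)\in w^*\xi$; then $\delbar^\pi w=0$ reads $d^\pi w(\del_t)=J\zeta$.

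First I would invoke the elementary torsion identity valid for any affine connection $\nabla$ with torsion $T$ and any smooth $w$: $d^{w^*\nabla}(dw)(X,Y)=T(dw(X),dw(Y))$, which is a one-line computation against the Christoffel symbols. Evaluating at $(\del_s,\del_t)$ and expanding $dw(\del_s),dw(\del_t)$ into their $\xi$- and $R_\lambda$-parts, the property $T(R_\lambda,\cdot)=0$ of Theorem \ref{thm:connection}(2) kills every term with an $R_\lambda$-insertion, so $T(dw(\del_s),dw(\del_t))=T(\zeta,J\zeta)$.

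Next I would pass from the pullback of $\nabla$ to the Hermitian connection $\nabla^\pi=\pi\circ\nabla|_\xi$ on $w^*\xi$, using that $(w^*\nabla^\pi)_X\sigma=\pi\big((w^*\nabla)_X\sigma\big)$ for sections $\sigma$ of $w^*\xi$. Writing $d^\pi w(\del_t)=dw(\del_t)-(w^*\lambda)(\del_t)R_\lambda$ and likewise with $s,t$ interchanged, the scalar derivatives of $w^*\lambda$ point along $R_\lambda$ and are annihilated by $\pi$, so the only discrepancies are covariant derivatives of $R_\lambda$ and one gets
\[
d^{\nabla^\pi}(d^\pi w)(\del_s,\del_t)=T^\pi(\zeta,J\zeta)-(w^*\lambda)(\del_t)\,\pi(\nabla_{\del_s}R_\lambda)+(w^*\lambda)(\del_s)\,\pi(\nabla_{\del_t}R_\lambda),
\]
where $T^\pi=\pi T$ and, by Corollary \ref{cor:connection}(1), $\pi(\nabla_{\del_s}R_\lambda)=\nabla_{\del_s}R_\lambda=\tfrac12(\CL_{R_\lambda}J)J\,d^\pi w(\del_s)$ since $\CL_{R_\lambda}J$ annihilates $R_\lambda$ and takes values in $\xi$ (likewise for $\del_t$). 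Now Theorem \ref{thm:connection}(5) gives $T^\pi(\zeta,J\zeta)=-T^\pi(J\zeta,\zeta)=0$, and substituting $d^\pi w(\del_t)=J\zeta$, $J^2=-\mathrm{id}$ on $\xi$, one reduces the remaining two terms to $-\,w^*\lambda\circ j\wedge\big(\tfrac12(\CL_{R_\lambda}J)\,d^\pi w\big)$ evaluated at $(\del_s,\del_t)$, after recording $(w^*\lambda\circ j)(\del_s)=(w^*\lambda)(\del_t)$ and $(w^*\lambda\circ j)(\del_t)=-(w^*\lambda)(\del_s)$; tensoriality then upgrades this to the global identity.

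The step I expect to require the most care is the passage from $w^*\nabla$ to $\nabla^\pi$: confirming that the discrepancy between $\pi\big((w^*\nabla)(dw)\big)$ and $d^{\nabla^\pi}(d^\pi w)$ is exactly $\mp(w^*\lambda)$ times the $\xi$-covariant derivative of $R_\lambda$ in the two coordinate directions, and then tracking all the signs through the isothermal computation so that the factor $\tfrac12$ and the overall minus sign in \eqref{eq:fundamental} emerge correctly. The other ingredients — the vanishing of $T(R_\lambda,\cdot)$, the vanishing of $T^\pi(JY,Y)$, and the formula $\nabla_Y R_\lambda=\tfrac12(\CL_{R_\lambda}J)JY$ — are quoted verbatim from Theorem \ref{thm:connection} and Corollary \ref{cor:connection}.
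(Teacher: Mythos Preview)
The paper does not actually prove this theorem: it is quoted verbatim from \cite{oh-wang1} (as the label ``Theorem 4.2, \cite{oh-wang1}'' indicates) and used as a black box to derive the isothermal equation \eqref{eq:main-eq-isothermal}. So there is no proof in the present paper to compare against.

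That said, your argument is correct and is essentially the natural one. The torsion identity $d^{w^*\nabla}(dw)=T(dw,dw)$, the reduction via $T(R_\lambda,\cdot)=0$, the passage from $\nabla$ to $\nabla^\pi$ picking up exactly the $\nabla R_\lambda$ terms, the vanishing $T^\pi(J\zeta,\zeta)=0$, and the substitution $\nabla_Y R_\lambda=\tfrac12(\CL_{R_\lambda}J)JY$ are precisely the ingredients one needs, and your bookkeeping of the signs and the factor $\tfrac12$ checks out (in particular $\nabla_{dw(\del_t)}R_\lambda=\tfrac12(\CL_{R_\lambda}J)J(J\zeta)=-\tfrac12(\CL_{R_\lambda}J)\zeta$, which combines correctly with the $(w^*\lambda\circ j)(\del_t)=-(w^*\lambda)(\del_s)$ term). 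One small clarification you might add: when you write $\nabla_{\del_s}R_\lambda$ you mean $\nabla_{dw(\del_s)}R_\lambda$, and you are implicitly using $\nabla_{R_\lambda}R_\lambda=0$ from Theorem \ref{thm:connection}(3) to drop the Reeb part of $dw(\del_s)$ before applying Corollary \ref{cor:connection}(1).
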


In an isothermal coordinates $z = x+iy$, with the evaluation of $(\frac{\del}{\del x},\frac{\del}{\del y})$
into \eqref{eq:fundamental}, the equation becomes
$$
\nabla_x^\pi \zeta + J \nabla_y^\pi \zeta
+ \frac{1}{2} \lambda\left(\frac{\del w}{\del y}\right)(\CL_{R_\lambda}J)\zeta - \frac{1}{2}
\lambda\left(\frac{\del w}{\del x}\right)(\CL_{R_\lambda}J)J\zeta =0.
$$
(See \cite[Corollary 4.3]{oh-wang1}.)

We note that
by the Sobolev embedding, $ W^{2,2} \subset C^{0,\alpha}$ for $0 \leq \alpha < 1/2$.
Therefore we start from $C^{0,\alpha}$ bound with $0 < \alpha <1/2$ and will inductively bootstrap it to
$C^{k, \alpha}$ bounds for $k \geq 1$ by a Schauder-type estimates, instead of the $W^{k+2,2}$-estimates.
The current approach also simplifies the higher derivative estimate given in \cite[Section 5.2]{oh-wang2}
which do the $W^{k+2,2}$-estimates instead.

We recall that $(\dot \Sigma,j)$ is equipped with strip-like coordinates
near a punctured neighborhood $U_i\setminus \{z_i\}$ for each $z_i$.
We also equip a K\"ahler metric $h$ on $\dot \Sigma$ that is strip-like, i.e.,
$h = d\tau^2 + dt^2$ thereon.

\begin{thm}\label{thm:local-regularity} Let $w$ be a contact instanton satisfying \eqref{eq:contacton-Legendrian-bdy-intro}.
Then for any pair of domains $D_1 \subset D_2 \subset \dot \Sigma$ such that $\overline{D_1}\subset D_2$, we have
$$
\|dw\|_{C^{k,\alpha}(D_1)} \leq C \|dw\|_{W^{1,2}(D_2)}
$$
for some constant $C > 0$ depending on $J$, $\lambda$ and $D_1, \, D_2$ but independent of $w$.
\end{thm}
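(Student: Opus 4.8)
The plan is to combine the coercive $W^{2,2}$-estimate of Theorem \ref{thm:local-W12} (or Theorem \ref{thm:local-W12-intro}) with a bootstrap argument built on the Schauder theory for the first-order elliptic system
$$
\nabla_x^\pi \zeta + J \nabla_y^\pi \zeta
+ \tfrac{1}{2} \lambda\!\left(\tfrac{\del w}{\del y}\right)(\CL_{R_\lambda}J)\zeta - \tfrac{1}{2}
\lambda\!\left(\tfrac{\del w}{\del x}\right)(\CL_{R_\lambda}J)J\zeta =0
$$
satisfied by $\zeta = d^\pi w$ in isothermal strip-like coordinates, together with the Reeb-component equation $d(w^*\lambda\circ j)=0$, which in the same coordinates becomes $\Delta(\text{the }\lambda\text{-component of }dw)$ up to first-order terms. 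First I would fix a nested chain of domains $D_1 \subset D_1' \subset \cdots \subset D_2$ so that each successive inclusion leaves room for a cutoff, and observe that by Theorem \ref{thm:local-W12} we control $\|dw\|_{W^{1,2}}$ on the inner domains by the same norm on $D_2$ once $\|dw\|_{L^2}$ and $\|dw\|_{L^4}$ and the boundary term $\|dw\|_{L^3(\del D_2)}$ are controlled — and by the $\epsilon$-regularity/interior density estimate of Theorem \ref{thm:density} together with the trace theorem all of these are dominated by $\|dw\|_{W^{1,2}(D_2)}$ (after a standard small-energy covering argument, or simply by noting that the statement only claims a bound by the $W^{1,2}$-norm on a slightly larger domain). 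So the starting point is $dw \in W^{1,2}$, hence $dw \in C^{0,\alpha}$ for $\alpha < 1/2$ by the Sobolev embedding $W^{1,2}(D) \hookrightarrow C^{0,\alpha}(D)$ in two real dimensions — wait, that embedding is false in dimension two, so more precisely one first uses $W^{2,2}\hookrightarrow C^{0,\alpha}$ applied to $w$ itself (so that $w\in C^{1,\alpha}$, equivalently $dw\in C^{0,\alpha}$), which is what the text indicates by "$W^{2,2}\hookrightarrow C^{0,\alpha}$".

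Next I would run the bootstrap. Regard the displayed $\delbar$-type system for $\zeta=d^\pi w$ as $\overline\partial\zeta = (\text{lower-order terms})\cdot\zeta$, where the coefficients are smooth tensors on $M$ evaluated along $w$ and linear in $dw$; since $w\in C^{1,\alpha}$ these coefficients are in $C^{0,\alpha}$ on the relevant domain. The interior Schauder estimate for the Cauchy–Riemann operator with a Legendrian (totally real) boundary condition — the boundary condition $w(\tau,i)\in R_i$ translates, via $\delbar^\pi w=0$ and the Legendrian property exactly as in Lemma \ref{lem:Neunman-bdy}, into the statement that along $\del\dot\Sigma$ the vector $\del w/\del\tau$ lies in $TR_i$ and $\del w/\del t$ lies in $NR_i$, i.e. a totally real linear boundary condition for $\zeta$ — then upgrades $\zeta\in C^{0,\alpha}$ to $\zeta\in C^{1,\alpha}$ on a slightly smaller domain, with norm bounded by $\|\zeta\|_{C^{0,\alpha}}$ plus the $C^{0,\alpha}$-norm of the coefficients times $\|\zeta\|_{C^{0,\alpha}}$, hence by a polynomial in $\|dw\|_{C^{0,\alpha}}$. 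Handling the Reeb component $f:=\lambda(dw)$ is parallel: $d(f\circ j)=0$ together with $\delbar^\pi w=0$ gives an inhomogeneous $\overline\partial$ (or Laplace) equation for the function $\lambda(\del w/\del t)$ with $C^{0,\alpha}$ right-hand side and Neumann boundary condition (Lemma \ref{lem:Neunman-bdy}), so elliptic Schauder theory for the Neumann problem gives the same gain of one derivative. Having reached $dw\in C^{1,\alpha}$, the coefficients of both systems are now in $C^{1,\alpha}$, and one iterates: at stage $m$, $dw\in C^{m,\alpha}$ forces the coefficients into $C^{m,\alpha}$, and the Schauder estimate gives $dw\in C^{m+1,\alpha}$ on a smaller domain, with the norm bounded (after absorbing, using Young's inequality and the smallness afforded by the cutoff) by a constant times $\|dw\|_{W^{1,2}(D_2)}$. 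After $k$ steps we obtain $\|dw\|_{C^{k,\alpha}(D_1)}\le C\|dw\|_{W^{1,2}(D_2)}$ with $C$ depending only on $J$, $\lambda$, the $R_i$, and the domains; the $R_i$-dependence enters only through the boundary Schauder constants and the second fundamental forms $B_i$.

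The main obstacle I expect is the boundary Schauder estimate itself: one must verify that the Legendrian boundary condition, after the reduction of Lemma \ref{lem:Neunman-bdy}, defines a \emph{coercive} (Lopatinski–Shapiro) boundary condition for the pair of equations (the $\xi$-valued $\overline\partial$-system and the scalar Neumann problem), uniformly as the point moves along the boundary, so that the standard half-space Schauder estimate for first-order elliptic systems with totally real boundary conditions applies; this is where the contact-geometric structure (the splitting $TM=\xi\oplus\R\langle R_\lambda\rangle$ being $\nabla$-parallel along $\del\dot\Sigma$ in the sense of Theorem \ref{thm:connection}) is genuinely used to decouple the tangential and normal pieces. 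A secondary technical nuisance is bookkeeping the domains and the repeated absorption of the leading term across the $k$ bootstrap stages so that the final constant depends only on the \emph{outermost} pair $D_1\subset D_2$; this is routine but must be arranged carefully. Given the referenced interior analogue in \cite[Section 5.2]{oh-wang2}, the real content here is solely the boundary version, and I would present it by working in a half-disc model, flattening the boundary, freezing coefficients, and invoking the constant-coefficient half-space estimate followed by the usual perturbation argument.
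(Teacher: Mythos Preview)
Your proposal is essentially the paper's proof: start from the $W^{2,2}$-estimate, use the Sobolev embedding $W^{2,2}\hookrightarrow C^{0,\alpha}$, then run an alternating Schauder bootstrap on the coupled first-order system for the $\xi$-component $\zeta=d^\pi w$ (with totally real Legendrian boundary condition) and the Reeb component, gaining one H\"older derivative at each pass. The only packaging difference is in the Reeb piece: the paper combines the two scalar functions into the single complex function
\[
\alpha = \lambda\!\left(\tfrac{\partial w}{\partial y}\right) + i\,\lambda\!\left(\tfrac{\partial w}{\partial x}\right)
\]
and treats the resulting inhomogeneous equation $\overline\partial\alpha=\tfrac12|\zeta|^2$ as a Riemann--Hilbert problem with the real boundary condition $\alpha|_{\partial D}\in\R$ (coming directly from $\lambda(\partial w/\partial x)|_{\partial D}=0$), whereas you propose an equivalent scalar Neumann problem for $\lambda(\partial w/\partial t)$. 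Both lead to the same bootstrap.

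One correction: the Neumann condition on $\lambda(\partial w/\partial t)$ does \emph{not} follow from Lemma~\ref{lem:Neunman-bdy}, which concerns the full vector $\partial w/\partial t$ lying in $NR_i$ and says nothing about normal derivatives of its $\lambda$-component. The Neumann condition you need is instead a consequence of the Legendrian condition $\lambda(\partial w/\partial\tau)|_{\partial}=0$ combined with the equation $d(w^*\lambda\circ j)=0$, which in isothermal coordinates reads $\partial_\tau\lambda(\partial_\tau w)+\partial_t\lambda(\partial_t w)=0$; restricting to the boundary and using that the tangential derivative of $\lambda(\partial_\tau w)$ vanishes there gives $\partial_t\lambda(\partial_t w)|_{\partial}=0$. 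With that fix your argument goes through, and the paper's Riemann--Hilbert formulation simply encodes the same information more compactly.
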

\begin{proof}
WLOG, we assume that $D_2 \subset \dot \Sigma$ is a semi-disc with $\del D \subset \del \dot \Sigma$
and equipped with an isothermal coordinates $(x,y)$ such that
$$
D_2 = \{ (x,y) \mid |x|^2 + |y|^2 < \delta, \, y \geq 0\}
$$
for some $\delta > 0$
and so $\del D_2 \subset \{(x,y) \in D \mid y = 0\}$. Assume $D_1 \subset D_2$
is the semi-disc with radius $\delta /2$.
We denote $\zeta = \pi \frac{\del w}{\del x}$, $\eta = \pi \frac{\del w}{\del y}$
as in \cite{oh-wang2}, and consider the complex-valued function
$$
\alpha(x,y) = \lambda\left(\frac{\del w}{\del y}\right)
+ \sqrt{-1}\left(\lambda\left(\frac{\del w}{\del x}\right)\right)
$$
as in \cite[Subsection 11.5]{oh-wang3}. We note that since $w$ satisfies the
Legendrian boundary condition, we have
\be\label{eq:lambda(delw)=0}
\lambda\left(\frac{\del w}{\del x}\right) = 0
\ee
on $\del D_2$.

\begin{lem}[Lemma 11.19 \cite{oh-wang3}]\label{lem:*dw*lambda}
 Let $\zeta = \pi \frac{\del w}{\del x}$. Then
$$
*d(w^*\lambda) =|\zeta|^2.
$$
\end{lem}
Combining Lemma \ref{lem:*dw*lambda} together with the equation $d(w^*\lambda\circ j)=0$, we
notice that $\alpha$ satisfies the equations
\be\label{eq:atatau-equation}
\begin{cases}
\delbar \alpha =\nu, \quad \nu
=\frac{1}{2}|\zeta|^2 + \sqrt{-1}\cdot 0\\
\alpha(z) \in \R \quad z \in \del D_2
\end{cases}
\ee
thanks to \eqref{eq:lambda(delw)=0},
where $\delbar=\frac{1}{2}\left(\frac{\del}{\del x}+\sqrt{-1}\frac{\del}{\del y}\right)$
is the standard Cauchy-Riemann operator for the standard complex structure $J_0=\sqrt{-1}$.

Then we arrive at the following system of equations for the pair $(\zeta,\alpha)$
\be\label{eq:main-eq-isothermal}
\begin{cases}\nabla_x^\pi \zeta + J \nabla_y^\pi \zeta
+ \frac{1}{2} \lambda(\frac{\del w}{\del y})(\CL_{R_\lambda}J)\zeta - \frac{1}{2} \lambda(\frac{\del w}{\del x})(\CL_{R_\lambda}J)J\zeta =0\\
\zeta(z) \in TR_i \quad \text{for } \, z \in \del D_2
\end{cases}
\ee
for some $i = 0, \ldots, k$, and
\be\label{eq:equation-for-alpha}
\begin{cases}
\delbar \alpha = \frac{1}{2}|\zeta|^2 \\
\alpha(z) \in \R \quad \text{for } \, z \in \del D_2.
\end{cases}
\ee

These two equations form a nonlinear elliptic system for $(\zeta,\alpha)$ which are coupled: $\alpha$ is fed into
\eqref{eq:main-eq-isothermal} through its coefficients and then $\zeta$ provides the input
for the equation \eqref{eq:equation-for-alpha} and then back and forth. Using this structure of
coupling, we obtain the higher derivative estimates by alternating boot strap arguments between $\zeta$ and $\alpha$
which is now in order.

It is obvious to see that \eqref{eq:main-eq-isothermal} is a linear elliptic equation for $\zeta$ with
$W^{1,2}$ coefficients by the above $W^{2,2}$-estimate for $w$ where we have
\beastar
\frac{1}{2} \lambda(\frac{\del w}{\del y})(\CL_{R_\lambda}J)
& = & \frac{1}{2} (\text{\rm Re} \,\alpha +T) (\CL_{R_\lambda}J)\\
\frac{1}{2} \lambda(\frac{\del w}{\del x})(\CL_{R_\lambda}J)
& = & \frac{1}{2} (\text{\rm Im}\, \alpha) (\CL_{R_\lambda}J)J.
\eeastar
On the other hand, by the standard estimate for the Riemann-Hilbert problem
with the real (or imaginary) boundary condition, it follows from
\eqref{eq:equation-for-alpha} that $\alpha$ is in fact in $W^{2,2}$ and so
is in $C^{0,\delta}$ with say $\delta = 2/5 < 1/2$. By the standard Schauder estimate
applied to \eqref{eq:main-eq-isothermal}, $\eta$ is indeed in $C^{1+\delta}$.
By substituting $\eta$ back into \eqref{eq:equation-for-alpha}, we get $\alpha$ in
$C^{1+\delta}$. Repeating this alternating process between \eqref{eq:main-eq-isothermal}
and \eqref{eq:equation-for-alpha}, we have established the $C^k$-estimate for all $k \geq 1$.
This finishes the proof.
\end{proof}

\section{Vanishing of asymptotic charge and subsequence convergence}
\label{sec:subsequence-convergence}

In this section, we study the asymptotic behavior of contact instantons
on the Riemann surface $(\dot\Sigma, j)$ associated with a metric $h$ with \emph{strip-like ends}.
To be precise, we assume there exists a compact set $K_\Sigma\subset \dot\Sigma$,
such that $\dot\Sigma-\Int(K_\Sigma)$ is a disjoint union of punctured semi-disks
 each of which is isometric to the half strip $[0, \infty)\times [0,1]$ or $(-\infty, 0]\times [0,1]$, where
the choice of positive or negative strips depends on the choice of analytic coordinates
at the punctures.
We denote by $\{p^+_i\}_{i=1, \cdots, l^+}$ the positive punctures, and by $\{p^-_j\}_{j=1, \cdots, l^-}$ the negative punctures.
Here $l=l^++l^-$. Denote by $\phi^{\pm}_i$ such strip-like coordinates.
We first state our assumptions for the study of the behavior of boundary punctures.
(The case of interior punctures is treated in \cite[Section 6]{oh-wang2}.)

\begin{defn}Let $\dot\Sigma$ be a boundary-punctured Riemann surface of genus zero with punctures
$\{p^+_i\}_{i=1, \cdots, l^+}\cup \{p^-_j\}_{j=1, \cdots, l^-}$ equipped
with a metric $h$ with \emph{strip-like ends} outside a compact subset $K_\Sigma$.
Let
$w: \dot \Sigma \to M$ be any smooth map with Legendrian boundary condition.
We define the total $\pi$-harmonic energy $E^\pi(w)$
by
\be\label{eq:endenergy}
E^\pi(w) = E^\pi_{(\lambda,J;\dot\Sigma,h)}(w) = \frac{1}{2} \int_{\dot \Sigma} |d^\pi w|^2
\ee
where the norm is taken in terms of the given metric $h$ on $\dot \Sigma$ and the triad metric on $M$.
\end{defn}

We put the following hypotheses in our asymptotic study of the finite
energy contact instanton maps $w$ as in \cite{oh-wang2}, \emph{except not requiring the charge vanishing
condition $Q = 0$, which itself we will prove here under the hypothesis using the Legendrian boundary condition}:

\begin{hypo}\label{hypo:basic}
Let $h$ be the metric on $\dot \Sigma$ given above.
Assume $w:\dot\Sigma\to M$ satisfies the contact instanton equation \eqref{eq:contacton-Legendrian-bdy-intro}
and
\begin{enumerate}
\item $E^\pi_{(\lambda,J;\dot\Sigma,h)}(w)<\infty$ (finite $\pi$-energy);
\item $\|d w\|_{C^0(\dot\Sigma)} <\infty$.
\item $\Image w \subset K \subset M$ for some compact set $K$.
\end{enumerate}
\end{hypo}

Throughout this section, we work locally near one boundary puncture $p$, i.e., on a punctured semi-disc
$D^\delta(p) \setminus \{p\}$. By taking the associated conformal coordinates $\phi^+ = (\tau,t)
:D^\delta(p) \setminus \{p\} \to [0, \infty)\times [0,1] \to $ such that $h = d\tau^2 + dt^2$,
we need only look at a map $w$ defined on the half strip $[0, \infty)\times [0,1]\to M$
without loss of generality.

The above finite $\pi$-energy and $C^0$ bound hypotheses imply
\be\label{eq:hypo-basic-pt}
\int_{[0, \infty)\times [0,1]}|d^\pi w|^2 \, d\tau \, dt <\infty, \quad \|d w\|_{C^0([0, \infty)\times [0,1])}<\infty
\ee
in these coordinates.

Let $w$ satisfy Hypothesis \ref{hypo:basic}. We can associate two
natural asymptotic invariants at each puncture defined as
\bea
T & := & \frac{1}{2}\int_{[0,\infty) \times [0,1]} |d^\pi w|^2 + \int_{\{0\}\times [0,1]}(w|_{\{0\}\times [0,1]})^*\lambda\label{eq:TQ-T}\\
Q & : = & \int_{\{0\}\times [0,1]}((w|_{\{0\}\times [0,1] })^*\lambda\circ j).\label{eq:TQ-Q}
\eea
(Here we only look at positive punctures. The case of negative punctures is similar.)

We will use the following equality which is derived in \cite{oh-wang1}. (See \cite[Equation (3.1)]{oh-wang1}.)
\begin{lem} Let $h$ be a K\"aher metric of $(\Sigma,j)$. Suppose $w$ satisfies $\delbar^\pi w = 0$. Then
\be\label{eq:dw*lambda}
d(w^*\lambda) = \frac12 |d^\pi w|^2 dA
\ee
where $dA$ is the area form of $h$.
\end{lem}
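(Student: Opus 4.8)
This is the contact analogue of the standard identity $d(u^*\omega)=\tfrac12|du|^2\,dA$ for $J$-holomorphic curves, and it is \cite[Eq.~(3.1)]{oh-wang1}; I would reprove it for completeness by a short local computation. First I would use that $d$ commutes with pullback to write $d(w^*\lambda)=w^*(d\lambda)$, which reduces the claim to a pointwise identification of $w^*(d\lambda)$. Fix $z_0\in\dot\Sigma$ and isothermal coordinates $z=x+iy$ near $z_0$ in which $h=\rho\,(dx^2+dy^2)$ with $\rho>0$, so that $dA=\rho\,dx\wedge dy$. Writing $\del_x w:=dw(\del/\del x)$ and $\del_y w:=dw(\del/\del y)$, we have $w^*(d\lambda)=d\lambda(\del_x w,\del_y w)\,dx\wedge dy$. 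Decomposing $\del_x w$ and $\del_y w$ along $TM=\xi\oplus\R\langle R_\lambda\rangle$ and using $R_\lambda\rfloor d\lambda=0$ from \eqref{eq:Rlambda}, the Reeb components drop out, so $d\lambda(\del_x w,\del_y w)=d\lambda(\del_x^\pi w,\del_y^\pi w)$ with $\del_x^\pi w:=\pi\,\del_x w$, etc.

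Next I would bring in the equation: in these coordinates $\delbar^\pi w=0$ is exactly $\del_y^\pi w=J\,\del_x^\pi w$ (the same identity that was used, in the $(\tau,t)$-notation, in the proofs in Sections \ref{sec:contact-instantons}--\ref{sec:Wk+22-estimates}). Substituting,
\[
d\lambda(\del_x^\pi w,\del_y^\pi w)=d\lambda(\del_x^\pi w, J\,\del_x^\pi w)=g_\xi(\del_x^\pi w,\del_x^\pi w)=|\del_x^\pi w|^2 ,
\]
where the second equality is the defining compatibility $g_\xi=d\lambda(\cdot,J\cdot)$ of the contact triad $(M,\lambda,J)$ together with the positivity of $d\lambda|_\xi$ against $J$.

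Finally I would match this against $\tfrac12|d^\pi w|^2\,dA$. Since $J$ is a $g_\xi$-isometry on $\xi$, $|\del_y^\pi w|^2=|J\,\del_x^\pi w|^2=|\del_x^\pi w|^2$, and with respect to $h=\rho(dx^2+dy^2)$ one has $|d^\pi w|^2=\rho^{-1}\big(|\del_x^\pi w|^2+|\del_y^\pi w|^2\big)=2\rho^{-1}|\del_x^\pi w|^2$. Hence $\tfrac12|d^\pi w|^2\,dA=\rho^{-1}|\del_x^\pi w|^2\cdot\rho\,dx\wedge dy=|\del_x^\pi w|^2\,dx\wedge dy=w^*(d\lambda)$, which is the asserted equality; note the conformal factor $\rho$ cancels, which is precisely why the statement is insensitive to the choice of K\"ahler metric $h$ within its conformal class. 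I do not expect any genuine obstacle here; the only point requiring care is the bookkeeping of conventions — that the triad metric is $g=g_\xi+\lambda\otimes\lambda$ with $g_\xi=d\lambda(\cdot,J\cdot)$ positive on $\xi$, and that the chosen $\delbar^\pi$ sign convention gives $\del_y^\pi w=+J\,\del_x^\pi w$ and hence a $+\tfrac12$ (rather than $-\tfrac12$) in front of $|d^\pi w|^2$.
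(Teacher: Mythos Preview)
Your proof is correct. The paper does not actually give a proof of this lemma; it simply cites \cite[Equation (3.1)]{oh-wang1}, so there is nothing to compare against in the present paper, and your direct local computation in isothermal coordinates is exactly the standard argument one would expect in the cited reference.
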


\begin{rem}\label{rem:TQ}
In particular \eqref{eq:dw*lambda} holds for any contact instanton $w$.  By Stokes' formula, we can express
$$
T = \frac{1}{2}\int_{[s,\infty) \times [0,1]} |d^\pi w|^2 + \int_{\{s\}\times [0,1]}(w|_{\{s\}\times [0,1]})^*\lambda, \quad
\text{for any } s\geq 0.
$$

Moreover, since $w$ satisfies $d(w^*\lambda\circ j)=0$ and the Legendrian boundary condition, it follows that the integral
$$
\int_{\{s \}\times [0,1]}(w|_{\{s \}\times [0,1]})^*\lambda\circ j, \quad
\text{for any } s \geq 0
$$
does not depend on $s$ whose common value is nothing but $Q$.
\end{rem}
We call $T$ the \emph{asymptotic contact action}
and $Q$ the \emph{asymptotic contact charge} of the contact instanton $w$ at the given puncture.

The proof of the following subsequence convergence result largely follows that of
\cite[Theorem 6.4]{oh-wang2}. Since we need to take care of the Legendrian boundary condition and
also need to prove that the charge $Q$ vanishes in the course of the proof,
we duplicate the details of the proof therein in the first half of our proof and then
explain in the second half how the charge vanishing occurs
under the Legendrian boundary condition. One may say that \emph{the presence of Legendrian barrier
prevents the instanton from spiraling.}

\begin{thm}[Subsequence Convergence]\label{thm:subsequence}
Let $w:[0, \infty)\times [0,1]\to M$ satisfy the contact instanton equations \eqref{eq:contacton-Legendrian-bdy-intro}
and Hypothesis \eqref{eq:hypo-basic-pt}.

Then for any sequence $s_k\to \infty$, there exists a subsequence, still denoted by $s_k$, and a
massless instanton $w_\infty(\tau,t)$ (i.e., $E^\pi(w_\infty) = 0$)
on the cylinder $\R \times [0,1]$  that satisfies the following:
\begin{enumerate}
\item $\delbar^\pi w_\infty = 0$ and
$$
\lim_{k\to \infty}w(s_k + \tau, t) = w_\infty(\tau,t)
$$
in the $C^l(K \times [0,1], M)$ sense for any $l$, where $K\subset [0,\infty)$ is an arbitrary compact set.
\item $w_\infty$ has vanishing asymptotic charge $Q = 0$ and satisfies $w_\infty(\tau,t)= \gamma(T\, t)$
for some Reeb chord $\gamma$ is some Reeb chord joining $R_0$ and $R_1$ with period $T$ at each puncture.
\item $T > 0$ either for single Legendrian i.e., with the case of one puncture or for a generic nonempty
collection $\{R_0, \cdots, R_k\}$ with $k \geq 1$ with repetition allowed so that $R_i \cap R_j = \emptyset$
for $R_i \neq R_j$.
\end{enumerate}
\end{thm}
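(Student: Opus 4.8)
The plan is to follow the scheme of \cite[Theorem 6.4]{oh-wang2} for the compactness part, and then to exploit the Legendrian boundary condition through a Schwarz reflection plus Liouville argument in order to rigidify the limit and force the charge to vanish.

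For assertion (1): set $w_k(\tau,t):=w(s_k+\tau,t)$, defined on $[-s_k,\infty)\times[0,1]$, and first produce uniform $C^l_{\mathrm{loc}}$ bounds. Hypothesis \ref{hypo:basic}(2)--(3) give a uniform $C^1$ bound and keep the image in a fixed compact set; feeding this into the coercive estimate of Theorem \ref{thm:local-W12} yields a uniform $W^{1,2}$ bound for $dw_k$ on each fixed strip $[-R,R]\times[0,1]$, and Theorem \ref{thm:local-regularity} then promotes it to uniform $C^{k,\alpha}$ bounds on $[-R+1,R-1]\times[0,1]$ for every $k$; all of these bounds are translation invariant, hence uniform in $k$. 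Arzela--Ascoli and a diagonal argument extract a subsequence converging in $C^l_{\mathrm{loc}}(\R\times[0,1],M)$ for every $l$ to a map $w_\infty$ still satisfying $\delbar^\pi w_\infty=0$, $d(w_\infty^*\lambda\circ j)=0$ and the Legendrian boundary condition. Since $E^\pi(w)<\infty$, the $\pi$-energy of $w$ over $[s_k-R,\infty)\times[0,1]$ tends to $0$, so $\int_{[-R,R]\times[0,1]}|d^\pi w_\infty|^2=\lim_k\int_{[s_k-R,s_k+R]\times[0,1]}|d^\pi w|^2=0$; letting $R\to\infty$ gives $E^\pi(w_\infty)=0$, i.e. $w_\infty$ is massless.

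The heart is assertion (2). Masslessness forces $d^\pi w_\infty\equiv 0$, so $dw_\infty=(w_\infty^*\lambda)R_\lambda$; writing $w_\infty^*\lambda=a\,d\tau+b\,dt$ with $a=\lambda(\partial_\tau w_\infty)$, $b=\lambda(\partial_t w_\infty)$, the complex function $\alpha:=b+\sqrt{-1}\,a$ (as in \cite[Subsection 11.5]{oh-wang3}) is holomorphic on the strip: closedness $d(w_\infty^*\lambda)=\frac12|d^\pi w_\infty|^2\,dA=0$ together with the second equation $d(w_\infty^*\lambda\circ j)=0$ are exactly the Cauchy--Riemann equations $\partial_\tau b=\partial_t a$, $\partial_\tau a=-\partial_t b$. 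The Legendrian boundary condition is precisely what makes $\alpha$ real on the boundary: $\partial_\tau w_\infty(\tau,0)\in TR_0\subset\xi$ and $\partial_\tau w_\infty(\tau,1)\in TR_1\subset\xi$, so $a=\Im\alpha$ vanishes on $t=0$ and on $t=1$. Since $|dw_\infty|$ is bounded (inherited from the uniform $C^0$ bound), $\alpha$ is a bounded holomorphic function on $\R\times[0,1]$ which is real on both edges; Schwarz reflection across $t=0$ and $t=1$, iterated to tile the plane, extends $\alpha$ to a bounded entire function, so by Liouville $\alpha\equiv T$ for a real constant $T$. Hence $\partial_\tau w_\infty\equiv 0$ and $\partial_t w_\infty\equiv T R_\lambda$, so $w_\infty(\tau,t)=\gamma(Tt)$ with $\gamma(s):=\phi_{R_\lambda}^{s}(w_\infty(0,0))$ a Reeb trajectory, and the boundary condition gives $\gamma(0)\in R_0$, $\gamma(T)\in R_1$, i.e. a Reeb chord joining $R_0$ and $R_1$ of period $|T|$. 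Finally $w_\infty^*\lambda\circ j=T\,d\tau$ restricts to $0$ on each slice $\{s\}\times[0,1]$, so $Q=0$; this is exactly where the Legendrian barrier prevents the spiraling that is unavoidable in the closed-string case, in which the analogous $\alpha$ lives on a cylinder and a nonzero flux $Q$ is possible. I expect this reflection--Liouville step, and in particular recognizing that the Legendrian condition is what supplies the real boundary values, to be the conceptual crux, though not technically hard once the preceding estimates are in hand.

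For assertion (3) I would argue by contradiction: $T=0$ would make $w_\infty$ the constant map at a point of $R_0\cap R_1$, where $R_0,R_1$ are the two Legendrians meeting at the puncture. For a generic tuple with $R_i\cap R_j=\emptyset$ whenever $R_i\ne R_j$, this is impossible when those two Legendrians are distinct, so $T\ne 0$; that $T>0$ rather than $T<0$ reflects the choice of this as a positive puncture (a chord of negative length is a chord from $R_1$ to $R_0$, i.e. the same puncture read with the opposite orientation), which one can also see from the monotonicity of $s\mapsto\int_{\{s\}\times[0,1]}(w|_{\{s\}\times[0,1]})^*\lambda$, non-decreasing by \eqref{eq:dw*lambda} and the vanishing of its boundary terms along $t=0,1$ under the Legendrian condition. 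In the single-Legendrian case $R_0=R_1$, $T=0$ would force, via the $\epsilon$-regularity estimate of Theorem \ref{thm:density} (so that $e(w)\to 0$ at the puncture) and the same reflection argument applied to $w$ itself, the map to be asymptotically trivial and to extend smoothly across the puncture; ruling this out, i.e. insisting that the puncture be a genuine end, gives $T>0$. The main subtlety I anticipate in (3) is making this dichotomy clean in the single-Legendrian case; the distinct-Legendrian case is immediate from disjointness.
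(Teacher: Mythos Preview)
Your proof is correct and follows essentially the same route as the paper's: the compactness argument is identical, and your Schwarz reflection plus Liouville step is equivalent to the paper's observation that $a=\lambda(\partial_\tau w_\infty)$ is a bounded harmonic function on the strip with zero Dirichlet data on both edges (from the Legendrian condition), hence vanishes, after which the Cauchy--Riemann equations force $b$ to be constant. Your treatment of (3) in the single-Legendrian case is more careful than the paper's terse assertion that ``$R_\lambda$ has no zero and so cannot have a constant solution''; neither argument is fully fleshed out there, but the distinct-Legendrian case is immediate in both.
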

\begin{proof}
For a given contact instanton $w: [0, \infty)\times [0,1]\to M$, we consider the family of maps
$w_s: [-s, \infty) \times [0,1] \to M$ defined by
$w_s(\tau, t) = w(\tau + s, t)$ with Legendrian boundary condition
$$
w_s(\tau,0) \in R, \quad w_s(\tau,1) \in R'
$$
for $R,\, R' \in \{R_0, \ldots, R_k\}$.
For any compact set $K\subset \R$, there exists some $s_0$ large enough such that
$K\subset [-s, \infty)$ for every $s\geq s_0$. For such $s\geq s_0$, we can also get an $[s_0, \infty)$-family of maps by defining $w^K_s:=w_s|_{K\times [0,1]}:K\times [0,1]\to M$.

The asymptotic behavior of $w$ at infinity can be understood by studying the limiting behavior of the sequence of maps
$\{w^K_s:K\times [0,1]\to M\}_{s\in [s_0, \infty)}$, for each given compact set $K\subset \R$.

First of all,
it is easy to check that under Hypothesis \ref{hypo:basic}, the family
$\{w^K_s:K\times [0,1]\to M\}_{s\in [s_0, \infty)}$ satisfies the following
\begin{enumerate}
\item
For every $s\in [s_0, \infty)$,
$$
\begin{cases}
\delbar^\pi w^K_s=0, \quad d((w^K_s)^*\lambda\circ j)=0 \\
w_s(\tau,0) \in R, \quad w_s(\tau,1) \in R'.
\end{cases}
$$
\item $\lim_{s\to \infty}\|d^\pi w^K_s\|_{L^2(K\times [0,1])}=0$
\item $\|d w^K_s\|_{C^0(K\times [0,1])}\leq \|d w\|_{C^0([0, \infty)\times [0,1])}<\infty$.
\end{enumerate}

From (1) and (3) together with the compactness of $\overline{\Image w} \subset M$
(which provides a uniform $L^2(K\times [0,1])$ bound)
and Theorem \ref{thm:local-regularity}, we obtain
$$
\|w^K_s\|_{C^{1,\alpha}(K\times [0,1])}\leq C_{K;(1,\alpha)}<\infty,
$$
for some constant $C_{K;(1,\alpha)}$ independent of $s$.
Then
 $\{w^K_s:K\times [0,1]\to M\}_{s\in [s_0, \infty)}$ is sequentially pre-compact.
Therefore, for any sequence $s_k \to \infty$, there exists a subsequence, still denoted by $s_k$,
and some limit $w^K_\infty\in C^{1}(K\times [0,1], M)$ (which may depend on the subsequence $\{s_k\}$), such that
$$
w^K_{s_k}\to w^K_{\infty}, \quad \text {as } k\to \infty,
$$
in the $C^{1}(K\times [0,1], M)$-norm sense.
Furthermore, combining this with (2) and \eqref{eq:dw*lambda}, we get
$$
dw^K_{s_k}\to dw^K_{\infty} \quad \text{and} \quad dw^K_\infty=(w^K_\infty)^*\lambda\otimes R_\lambda,
$$
and both $(w^K_\infty)^*\lambda$ and $(w^K_\infty)^*\lambda\circ j$ are harmonic one-forms: Closedness of
$(w^K_\infty)^*\lambda$ follows from \eqref{eq:dw*lambda} and the convergence
$$
|d^\pi w^K_\infty|^2 = \lim_{k \to \infty}|d^\pi w^K_{s_k}|^2 = 0.
$$
Closedness of $(w^K_\infty)^*\lambda\circ j$, which is equivalent to the coclosedness of $(w^K_\infty)^*\lambda$,
follows from the hypothesis. The limit map $w_\infty^K$ also satisfies
Legendrian boundary conditions
$$
w_\infty^K(\tau,0) \in R, \quad w_\infty^K(\tau,1) \in R'
$$
since $w^K_{s_k} \to w^K_\infty$ in $C^1$-topology and each $w^K_{s_k}$ satisfies
the same Legendrian boundary condition.

Note that these limiting maps $w^K_\infty$ have a common extension $w_\infty: \R\times [0,1]\to M$
by a  diagonal sequence argument where, one takes a sequence of compact sets $K$ one including another and exhausting $\R$.
Then it follows that  $w_\infty$ is $C^1$,  satisfies
$$
\|d w_\infty\|_{C^0(\R\times [0,1])}\leq \|d w\|_{C^0([0, \infty)\times [0,1])}<\infty
$$
and $d^\pi w_\infty = 0$. Therefore
$$
dw_\infty=w_\infty^*\lambda \otimes R_\lambda
$$
Then we derive from Theorem \ref{thm:local-regularity} that $w_\infty$ is actually in $C^\infty$.
Also notice that both $w_\infty^*\lambda$ and $w_\infty^*\lambda\circ j$ are bounded harmonic one-forms on $\R\times [0,1]$.
\begin{lem} We have
$$
w_\infty^*\lambda= b\,dt, \quad w_\infty^*\lambda\circ j=b\,d\tau
$$
for some constant $b$.
\end{lem}
\begin{proof}
We have $w_\infty^*\lambda = f\,d\tau + g \, dt$ for some bounded functions $f, \, g$.
Furthermore $w_\infty^*\lambda$ is harmonic if and only if $(f,g)$ satisfies the Cauchy-Riemann equation
\be\label{eq:harmonic-fg}
\frac{\del f}{\del t} = \frac{\del g}{\del \tau}, \, \frac{\del f}{\del \tau} = - \frac{\del g}{\del t}
\ee
for the complex coordinate $\tau + it$.
This in particular implies $\Delta f = 0$. On the other hand,
the Legendrian boundary condition
$$
w_\infty(\tau,0) \in R, \quad w(\tau,1) \in R'
$$
implies
\be\label{eq:Q=0}
\lambda\left(\frac{\del w_\infty}{\del \tau}(\tau,0)\right) = 0 = \lambda\left(\frac{\del w_\infty}{\del \tau}(\tau,1)\right).
\ee
Therefore $f$ is bounded and satisfies
$$
\begin{cases}
\Delta f = 0 \quad &\text{on }\, R \times [0,1]\\
f(\tau,0) = f(\tau,1) = 0 \quad &\text{for all }\, \tau \in \R.
\end{cases}
$$
It follows from standard results on the harmonic functions on the strip $\R \times [0,1]$
satisfying the Dirichlet boundary condition that $f = 0$.
The equation \eqref{eq:harmonic-fg} in turn implies $\frac{\del g}{\del \tau} = 0 = \frac{\del g}{\del t}$
from which we derive $g$ is a constant function.
Therefore the forms $w_\infty^*\lambda$ and $w_\infty^*\lambda\circ j$  must be written in  the form
$$
w_\infty^*\lambda= b\,dt, \quad w_\infty^*\lambda\circ j=b\,d\tau.
$$
for some constants $b$. This finishes the proof.
\end{proof}

We now show that
$$
Q = 0, \quad T = b.
$$
By taking an arbitrary point $r\in K$, since $w_\infty|_{\{r\}\times [0,1]}$ is the limit of some sequence $w_{s_k}|_{\{r\}\times [0,1]}$
in the $C^1$ sense, we derive
\beastar
b& = & \int_{\{r\}\times [0,1]}(w_\infty|_{\{r\}\times [0,1]})^*\lambda
= \int_{\{r\}\times [0,1]}\lim_{k\to \infty}(w_{s_k}|_{\{r\}\times [0,1]})^*\lambda\\
&=&\lim_{k\to \infty}\int_{\{r\}\times [0,1]}(w_{s_k}|_{\{r\}\times [0,1]})^*\lambda
= \lim_{k\to \infty}\int_{\{r+s_k\}\times [0,1]}(w|_{\{r+s_k\}\times [0,1]})^*\lambda\\
&=&\lim_{k\to \infty}(T-\frac{1}{2}\int_{[r+s_k, \infty)\times [0,1]}|d^\pi w|^2)\\
&=&T-\lim_{k\to \infty}\frac{1}{2}\int_{[r+s_k, \infty)\times [0,1]}|d^\pi w|^2
= T;\\
\\
0 &= & \int_{\{r\}\times [0,1]}(w_\infty|_{\{r\}\times [0,1]})^*\lambda\circ j
= \int_{\{r\}\times [0,1]}\lim_{k\to \infty}(w_{s_k}|_{\{r\}\times [0,1]})^*\lambda\circ j\\
&=&\lim_{k\to \infty}\int_{\{r\}\times [0,1]}(w_{s_k}|_{\{r\}\times [0,1]})^*\lambda\circ j\\
&=&\lim_{k\to \infty}\int_{\{r+s_k\}\times [0,1]}(w|_{\{r+s_k\}\times [0,1]})^*\lambda\circ j
= Q.
\eeastar
Here in the derivation, we use Remark \ref{rem:TQ}.

As in \cite{abbas}, \cite{oh-wang2}, we conclude that the image of $w_\infty$ is contained in
a single leaf of the Reeb foliation by the connectedness of $\R \times [0,1]$.
Let $\gamma: \R \to M$ be a parametrisation of
the leaf so that $\dot \gamma = R_\lambda(\gamma)$. Then we can write $w_\infty(\tau, t)=\gamma(s(\tau, t))$, where
$s:\R\times [0,1]\to \R$ and $s=T\,t+c_0$ since $ds=T\,dt$, where $c_0$ is some constant.

From this we derive that, if $T\neq 0$,
$\gamma$ is a Reeb chord of period $T$ joining $R$ and $R'$.
Of course, if $T$ also vanishes, $w_\infty$ is a constant map and so corresponds to an intersection point $R \cap R'$.

The latter case will not occur for a single Legendrian
and for a generic $k+1$ tuple $\{R_0,\ldots, R_k\}$
since $R_i$'s are pairwise disjoint for generic tuples by the dimensional reason,
because $R_\lambda$ has no zero and so cannot have a constant solution.
This finishes the proof.
\end{proof}

\begin{rem}\label{rem:big-difference} This charge vanishing of $Q = 0$ is a big advantage for
the current open string case which is a big deviation from
the closed string case studied in \cite{oh-wang2,oh-wang3,oh:contacton}.
This vanishing removes the only remaining difficulty, the so called the \emph{appearance of spiraling instantons along the
Reeb core}, towards the Fredholm theory and the compactification of
the moduli space of contact instantons. One could say that the presence of the Legendrian obstacle
blocks this spiraling phenomenon of the contact instantons.
In sequels to the present paper as in \cite{oh:perturbed-contacton-bdy},
we will develop a Fredholm theory and a compactification
in the spirit of \cite{oh:contacton} and apply this compactification for the construction of
the Fukaya type category on contact manifolds whose objects are Legendrian submanifolds
and morphisms and products will be constructed by counting appropriate
instantons with prescribed asymptotic conditions and Legendrian boundary conditions.
\end{rem}

From the previous theorem, we  immediately get the following corollary as in \cite[Section 8]{oh-wang2}.

\begin{cor}\label{cor:tangent-convergence} Let $w: \dot \Sigma \to M$ satisfy the contact instanton equation \eqref{eq:contacton-Legendrian-bdy-intro} and Hypothesis \eqref{eq:hypo-basic-pt}.
Then on each strip-like end with strip-like coordinates $(\tau,t) \in [0,\infty) \times [0,1]$ near a puncture
\beastar
&&\lim_{s\to \infty}\left|\pi \frac{\del w}{\del\tau}(s+\tau, t)\right|=0, \quad
\lim_{s\to \infty}\left|\pi \frac{\del w}{\del t}(s+\tau, t)\right|=0\\
&&\lim_{s\to \infty}\lambda(\frac{\del w}{\del\tau})(s+\tau, t)=0, \quad
\lim_{s\to \infty}\lambda(\frac{\del w}{\del t})(s+\tau, t)=T
\eeastar
and
$$
\lim_{s\to \infty}|\nabla^l dw(s+\tau, t)|=0 \quad \text{for any}\quad l\geq 1.
$$
All the limits are uniform for $(\tau, t)$ in $K\times [0,1]$ with compact $K\subset \R$.
\end{cor}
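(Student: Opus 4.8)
The plan is to upgrade the subsequential convergence of Theorem \ref{thm:subsequence} to a genuine limit as $s \to \infty$, in the spirit of \cite[Section 8]{oh-wang2}. First I would fix one boundary puncture with its strip-like coordinates $(\tau,t) \in [0,\infty) \times [0,1]$, and apply Theorem \ref{thm:subsequence}: given any sequence $s_k \to \infty$, after passing to a subsequence we have $w(s_k+\cdot,\cdot) \to w_\infty$ in $C^l_{\mathrm{loc}}$ for every $l$, where $w_\infty(\tau,t) = \gamma(T\,t)$ for some Reeb chord $\gamma$ joining two of the Legendrians and $T$ is the asymptotic contact action at that puncture. The one point requiring care is that the chord $\gamma$ — hence $w_\infty$ itself — may a priori depend on the chosen subsequence (uniqueness of $\gamma$ is only obtained later under a transversality hypothesis, in Section \ref{sec:exponential-convergence}), so one cannot simply assert $C^l$-convergence of $w$ to a single map. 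Instead the argument must be run at the level of the derivative quantities in the statement, whose limits turn out not to depend on $\gamma$.

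Second I would record the relevant jets of $w_\infty$. Since $E^\pi(w_\infty) = 0$ we have $d^\pi w_\infty = 0$, hence $\pi\frac{\del w_\infty}{\del\tau} = \pi\frac{\del w_\infty}{\del t} = 0$; and from $w_\infty^*\lambda = T\,dt$ (the constant $b$ in the lemma inside the proof of Theorem \ref{thm:subsequence} equals $T$) we get $\lambda(\frac{\del w_\infty}{\del\tau}) = 0$ and $\lambda(\frac{\del w_\infty}{\del t}) = T$. Thus $dw_\infty = T\,dt \otimes R_\lambda(\gamma)$, and since $\frac{\del w_\infty}{\del \tau} = 0$ and $\nabla_{R_\lambda} R_\lambda = 0$ by Theorem \ref{thm:connection}(3), one computes $\nabla_\tau(dw_\infty) = 0$ and $\nabla_t(dw_\infty) = T^2\,\nabla_{R_\lambda}R_\lambda\, dt = 0$, so $\nabla dw_\infty = 0$ and therefore $\nabla^l dw_\infty = 0$ for all $l \geq 1$. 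The crucial observation is that all five quantities — $0$, $0$, $0$, $T$, and $0$ for $l \geq 1$ — depend only on the number $T$, and not on the particular chord $\gamma$.

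Finally I would run the standard subsequence argument. For each $s$, the four first-order quantities $\pi\frac{\del w}{\del\tau}(s+\cdot,\cdot)$, $\pi\frac{\del w}{\del t}(s+\cdot,\cdot)$, $\lambda(\frac{\del w}{\del\tau})(s+\cdot,\cdot)$, $\lambda(\frac{\del w}{\del t})(s+\cdot,\cdot)$, and $\nabla^l dw(s+\cdot,\cdot)$, are continuous in $s$ on $K \times [0,1]$; and along any sequence $s_k \to \infty$ there is a further subsequence on which, by the $C^l_{\mathrm{loc}}$-convergence of Theorem \ref{thm:subsequence} (which is uniform on the compact set $K \times [0,1]$), they converge uniformly to the fixed values computed above. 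Since the limiting values are the same for every such subsequence, the limits as $s \to \infty$ exist and equal those values, uniformly on $K \times [0,1]$. I do not expect any genuine difficulty beyond Theorem \ref{thm:subsequence}: the only real content is the remark that the derivative data of the massless limit is subsequence-independent, together with the elementary computation of $\nabla^l dw_\infty$ from the defining properties of the contact triad connection.
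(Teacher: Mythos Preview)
Your proposal is correct and follows essentially the same approach the paper intends: the paper simply states that the corollary follows ``immediately'' from Theorem~\ref{thm:subsequence} ``as in \cite[Section 8]{oh-wang2}'', and what you have written is precisely the standard argument from that reference, adapted to the present open-string setting. Your explicit remark that the derivative data $(0,0,0,T,0)$ of the massless limit depends only on $T$ and not on the particular Reeb chord $\gamma$---so that the subsequence argument goes through even though $\gamma$ itself is not yet known to be unique---is exactly the point one needs, and your computation $\nabla^l dw_\infty = 0$ via $\nabla_{R_\lambda}R_\lambda = 0$ is correct.
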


\section{Exponential $C^\infty$ convergence}
\label{sec:exponential-convergence}

In this section, we improve the subsequence convergence to the exponential $C^\infty$ convergence
under the transversality hypothesis. Suppose that the tuple $R_0, \ldots, R_k$ are
transversal in the sense all pairwise Reeb chords are nondegenerate. In particular we assume
that the tuples are pairwise disjoint.

We closely follow the arguments used in \cite[Section 11]{oh-wang3}
which treats the closed string case \emph{under the hypothesis} of
vanishing charge $Q = 0$. Since in the current open string case, this vanishing is
proved and so all the arguments used therein can be repeated verbatim after adapting them to the presence
of the boundary condition in the arguments. So we will outline the main
arguments of the proofs referring the detailed arguments to \cite[Section 11]{oh-wang3}
but indicate only the necessary changes needed.

\subsection{$L^2$-exponential decay of the Reeb component of $dw$}
\label{subsec:Reeb}

In this subsection, we prove the exponential decay of the Reeb component $w^*\lambda$.
We focus on a punctured neighborhood around a puncture $z_i \in \del \Sigma$ equipped with
strip-like coordinates $(\tau,t) \in [0,\infty) \times [0,1]$.

We again consider a complex-valued function
$$
\alpha(\tau,t) = \left(\lambda(\frac{\del w}{\del t})- T \right) + \sqrt{-1}\left(\lambda(\frac{\del w}{\del\tau})\right).
$$
By the Legendrian boundary condition, we know $\alpha(\tau, i) \in \R$ for $i =0, \, 1$.

The following lemma was proved in the closed string case in \cite{oh-wang3}. For readers' convenience, we
provide some details by indicating how we adapt the argument with the presence of boundary condition.

\begin{lem}[Compare with Lemma 11.20 \cite{oh-wang3}]\label{lem:exp-decay-lemma}
Suppose the complex-valued functions $\alpha$ and $\nu$ defined on $[0, \infty)\times [0,1]$
satisfy
\beastar
\begin{cases}
\delbar \alpha = \nu, \hskip1in \alpha(\tau, i) \in \R \, & \text{\rm for } i = 0,\, 1, \\
\|\nu\|_{L^2([0,1])}+\left\|\nabla\nu\right\|_{L^2([0,1])}\leq Ce^{-\delta \tau}
\, & \text{\rm for some constants } C, \delta>0\\
\lim_{\tau\rightarrow +\infty}\alpha(\tau,t) =0
\end{cases}
\eeastar
then $\|\alpha\|_{L^2(S^1)}\leq \overline{C}e^{-\delta \tau}$
for some constant $\overline{C}$.
\end{lem}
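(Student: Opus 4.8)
The plan is to exploit the boundary value problem for $\alpha$ as a Riemann--Hilbert problem on the half-strip and to do a Fourier-type analysis in the $t$-variable, exactly as in \cite[Lemma 11.20]{oh-wang3}, but now using the \emph{real} (Dirichlet-type) boundary condition $\alpha(\tau,i)\in\R$ in place of the periodic condition used in the closed string case. First I would write $\alpha = \alpha_1 + \sqrt{-1}\,\alpha_2$ and note that $\delbar\alpha = \nu$ means $\partial_\tau\alpha_1 - \partial_t\alpha_2 = 2\Re\nu$ and $\partial_\tau\alpha_2 + \partial_t\alpha_1 = 2\Im\nu$ (up to the factor of $\frac12$ in the definition of $\delbar$), while the boundary condition forces $\alpha_2(\tau,0) = \alpha_2(\tau,1) = 0$. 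Equivalently, $\Delta\alpha = 4\,\partial\nu$ componentwise with the mixed Dirichlet/Neumann data: $\alpha_2$ vanishes on the boundary, and $\partial_t\alpha_1 = -\partial_\tau\alpha_2 + 2\Im\nu$ on the boundary, so $\alpha_1$ satisfies an inhomogeneous Neumann condition with exponentially small inhomogeneity.

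Next I would set $E(\tau) := \frac12\|\alpha(\tau,\cdot)\|_{L^2([0,1])}^2$ and establish a differential inequality of the form $E''(\tau) \geq \mu^2 E(\tau) - C' e^{-\delta'\tau}$ for suitable constants $\mu,\delta',C'>0$, where $\mu$ is the spectral gap of the relevant boundary operator on $[0,1]$. The key computation is: differentiate $E$ twice in $\tau$, substitute the Cauchy--Riemann-type relations to trade $\tau$-derivatives for $t$-derivatives, integrate by parts in $t$ (the boundary terms either vanish because $\alpha_2=0$ on the boundary, or are controlled by $\|\nu\|+\|\nabla\nu\|$ on the boundary, hence exponentially small), and invoke the appropriate Wirtinger/Poincar\'e inequality: for a function on $[0,1]$ that is a real part subject to Neumann data paired with an imaginary part subject to Dirichlet data, the first Fourier mode decoupling gives a lower bound $\int_0^1 |\partial_t\alpha|^2 \geq \pi^2\int_0^1|\alpha_2|^2 + (\text{mode-zero term for }\alpha_1)$. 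One subtlety: the mode-zero (constant in $t$) part of $\alpha_1$ is \emph{not} controlled by a Poincar\'e inequality, so I would need to handle it separately --- but this is precisely where the hypothesis $\lim_{\tau\to+\infty}\alpha(\tau,t)=0$ enters: the mode-zero part of $\alpha_1$ satisfies a scalar ODE with exponentially small forcing and vanishes at $+\infty$, hence decays exponentially by a direct ODE argument. Combining, I get the differential inequality for $E$.

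Finally, from $E''(\tau)\geq \mu^2 E(\tau) - C'e^{-\delta'\tau}$, $E\geq 0$, and $E(\tau)\to 0$ as $\tau\to\infty$ (which follows from the pointwise limit hypothesis together with the uniform bounds implicit in the setup), a standard comparison argument --- compare $E$ with $\overline{C}e^{-2\delta''\tau}$ where $\delta'' := \min(\mu,\delta')/2$, using that a nonnegative subsolution of $(\partial_\tau^2-\mu^2)u\leq (\text{exp. small})$ that decays at infinity must itself decay exponentially --- yields $E(\tau)\leq \overline{C}e^{-2\delta\tau}$ after possibly shrinking $\delta$, and hence $\|\alpha\|_{L^2([0,1])}\leq \overline{C}e^{-\delta\tau}$ as claimed (here I am identifying $L^2(S^1)$ in the statement with $L^2([0,1])$, the domain appropriate to the strip). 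The main obstacle, as indicated, is the careful bookkeeping of boundary terms in the integration by parts and the separate treatment of the $t$-independent mode of $\Re\alpha$; once those are in place, the rest is the standard ODE comparison already used in \cite{oh-wang3}.
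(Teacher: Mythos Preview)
Your proposal is correct and follows essentially the same strategy that the paper invokes: the paper's proof consists solely of a reference to the ``standard three-interval method'' of \cite[Theorem 10.11]{oh-wang3} (and the appendix of the arXiv version of \cite{oh-wang2}), with all details omitted. Your differential-inequality approach $E''(\tau)\geq \mu^2 E(\tau) - C'e^{-\delta'\tau}$ together with ODE comparison is the continuous counterpart of the three-interval argument, and your explicit handling of the boundary condition (Dirichlet for $\alpha_2$, inhomogeneous Neumann for $\alpha_1$) and the separate treatment of the $t$-mean of $\alpha_1$ via the scalar ODE $\bar\alpha_1'(\tau)=2\int_0^1\Re\nu\,dt$ are exactly the adaptations the paper alludes to but does not spell out.

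One minor remark on presentation: since the zero mode of $\alpha_1$ is not controlled by the Poincar\'e inequality, the cleanest way to organize your argument is to first subtract it off---show $\bar\alpha_1(\tau)=-2\int_\tau^\infty\!\int_0^1\Re\nu\,dt\,d\tau'$ decays like $e^{-\delta\tau}$ directly, then run the differential inequality on $\widetilde E(\tau):=\tfrac12\|\alpha-\bar\alpha_1\|_{L^2}^2$, for which the spectral gap $\mu=\pi$ is genuinely available. As written your sketch slightly conflates these two steps, but the ingredients are all there.
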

\begin{proof}
Notice that from previous section we have already
established the $W^{1, 2}$-exponential decay of $\nu = \frac{1}{2}|\zeta|^2$.
Once this is established, the proof of this $L^2$-exponential decay result
can be proved again by the standard three-interval method
and so omitted. (See \cite[Theorem 10.11]{oh-wang3} for a general abstract framework
or the Appendix of the arXiv version of \cite{oh-wang2} for friendly details.)
\end{proof}

\subsection{$C^0$ exponential convergence}

Now the $C^0$-exponential convergence of $w(\tau,\cdot)$ to some Reeb chord as $\tau \to \infty$
can be proved from the $L^2$-exponential estimates presented in previous sections by
the same argument as the proof of \cite[Lemma 11.22]{oh-wang3} whose proof is omitted.

\begin{prop}[Compare with Lemma 11.22 \cite{oh-wang3}]\label{prop:czero-convergence}
Under Hypothesis \ref{hypo:basic}, for any contact instanton $w$ satisfying the Legendrian boundary condition,
there exists a unique Reeb orbit $z(\cdot)=\gamma(T\cdot):[0,1] \to M$ with period $T>0$, such that
$$
\|d(w(\tau, \cdot), z(\cdot))
\|_{C^0([0,1])}\rightarrow 0,
$$
as $\tau\rightarrow +\infty$,
where $d$ denotes the distance on $M$ defined by the triad metric.
\end{prop}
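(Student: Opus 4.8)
The plan is to upgrade the $L^2$-exponential decay established in Section~\ref{subsec:Reeb} to a pointwise exponential decay of the full $\tau$-derivative $\del w/\del\tau$, and then read off the limiting Reeb chord by integration. Throughout I work in the strip-like coordinates $(\tau,t)\in[0,\infty)\times[0,1]$ near a positive puncture, write $\zeta=\pi\,\del w/\del\tau$ and $\alpha=(\lambda(\del w/\del t)-T)+\sqrt{-1}\,\lambda(\del w/\del\tau)$, and recall that $(\zeta,\alpha)$ solves the coupled system \eqref{eq:main-eq-isothermal}--\eqref{eq:equation-for-alpha} with the totally real boundary condition $\zeta(z)\in TR_i$ and the real Riemann--Hilbert boundary condition $\alpha(z)\in\R$.

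\emph{Step 1 ($C^0$-exponential decay of $\zeta$ and $\alpha$).} From the hypothesis and the conclusion of Lemma~\ref{lem:exp-decay-lemma} we have $\|\zeta\|_{L^2(\{\tau\}\times[0,1])}+\|\alpha\|_{L^2(\{\tau\}\times[0,1])}\le Ce^{-\delta\tau}$. Since, under Hypothesis~\ref{hypo:basic}, the coefficients of the \emph{linear} system \eqref{eq:main-eq-isothermal}--\eqref{eq:equation-for-alpha} are smooth and uniformly bounded, I would apply the interior-and-boundary Schauder estimates for this system (the same ones underlying Theorem~\ref{thm:local-regularity}) on the sliding semi-discs $[\tau-1,\tau+1]\times[0,1]$; combined with the $L^2$-decay this gives, after a finite bootstrap,
$$
|\zeta|(\tau,t)+|\alpha|(\tau,t)\ \le\ C' e^{-\delta'\tau},\qquad (\tau,t)\in[1,\infty)\times[0,1],
$$
uniformly in $t$, together with analogous estimates for all higher $(\tau,t)$-derivatives. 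This is verbatim the argument of \cite[Section~11]{oh-wang3}, the only new ingredient being that the boundary conditions $\zeta\in TR_i$ and $\alpha\in\R$ are carried along; it is legitimate precisely because the vanishing $Q=0$ proved in Section~\ref{sec:subsequence-convergence} lets one treat $\alpha$ as a genuine $\delbar$-equation with small inhomogeneity $\tfrac12|\zeta|^2$.

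\emph{Step 2 (convergence of $w(\tau,\cdot)$).} Decompose $\del w/\del\tau=\zeta+\lambda(\del w/\del\tau)R_\lambda$; then $|\zeta|\le|d^\pi w|$ and $|\lambda(\del w/\del\tau)|=|\Im\alpha|\le|\alpha|$, so Step~1 yields $|\del w/\del\tau|(\tau,t)\le C''e^{-\delta'\tau}$ uniformly in $t$. Hence for $1\le\tau_1<\tau_2$,
$$
d\big(w(\tau_1,t),w(\tau_2,t)\big)\ \le\ \int_{\tau_1}^{\tau_2}\Big|\tfrac{\del w}{\del\tau}(s,t)\Big|\,ds\ \le\ \tfrac{C''}{\delta'}\,e^{-\delta'\tau_1},
$$
so $\{w(\tau,\cdot)\}_{\tau\ge1}$ is uniformly Cauchy in $C^0([0,1],M)$, and therefore converges uniformly to some $z\in C^0([0,1],M)$ with the explicit rate $\|d(w(\tau,\cdot),z(\cdot))\|_{C^0([0,1])}\le(C''/\delta')e^{-\delta'\tau}\to0$.

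\emph{Step 3 (identification and uniqueness).} Since $|d^\pi w|\to0$ and $\lambda(\del w/\del\tau)\to0$, $\lambda(\del w/\del t)\to T$ uniformly (Corollary~\ref{cor:tangent-convergence} sharpened by Step~1), the limit curve satisfies $\dot z(t)=T\,R_\lambda(z(t))$, i.e. $z(t)=\gamma(Tt)$ for a Reeb trajectory $\gamma$; passing the Legendrian conditions $w(\tau,0)\in R_0$, $w(\tau,1)\in R_1$ to the limit shows $z$ is a Reeb chord joining $R_0$ and $R_1$. Equivalently, by Theorem~\ref{thm:subsequence} any subsequential limit $w_\infty(\tau,t)$ is $\tau$-independent and equals $\lim_k w(s_k+\tau,t)=z(t)$. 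Under the transversality hypothesis of this section the tuple is pairwise disjoint and $R_\lambda$ has no zero, so $z$ cannot be constant, forcing $T\ne0$, hence $T>0$ at the positive puncture; and $z$ is the honest limit of $w(\tau,\cdot)$ by Step~2, so it is uniquely determined by $w$. The main obstacle is Step~1 — transporting exponential decay through the elliptic system while retaining the Legendrian boundary conditions — but with $Q=0$ in hand this is routine bookkeeping, and Steps~2--3 are elementary.
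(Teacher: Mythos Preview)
Your argument is correct and matches the paper's intended route: the paper omits the proof, citing \cite[Lemma 11.22]{oh-wang3} and recording only the key Cauchy lemma $d(w(\tau,t),w(\tau',t))<\epsilon$ for $\tau,\tau'\ge\tau_1$, which is exactly what your Step~2 establishes (with an explicit exponential rate) after the bootstrap in Step~1. The only organizational difference is that you perform the $L^2\to C^0$ bootstrap \emph{before} proving $C^0$-convergence, whereas the paper records $C^0$-convergence first and postpones the bootstrapping to Section~\ref{subsec:Cinftydecaydu}; since the Schauder estimates on sliding strips need only the uniform $C^1$-bound from Hypothesis~\ref{hypo:basic} and not the identification of the limit, your ordering is legitimate and in fact yields the exponential rate of the subsequent proposition in the same stroke.
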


We just mention that the proof is based on the following lemma whose proof we
refer readers to that of \cite[Lemma 11.22]{oh-wang3}.
\begin{lem} Let $t \in [0,1]$ be given. Then
for any given $\epsilon > 0$, there exists sufficiently large $\tau_1 > 0$ such that
$$
d(w(\tau,t), w(\tau', t)) < \epsilon
$$
for all $\tau, \, \tau' \geq \tau_1$.
\end{lem}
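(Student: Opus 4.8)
The plan is to upgrade the $L^2$-exponential decay estimates of the previous subsection to a \emph{pointwise} exponential decay of $\del w/\del\tau$ in the triad metric, and then integrate in the $\tau$-variable. First I would recall, from the $\epsilon$-regularity estimate (Theorem~\ref{thm:density}) together with the finiteness of the $\pi$-energy and the arguments of Section~\ref{sec:subsequence-convergence}, that the tail $\pi$-energy $\int_{[s,\infty)\times[0,1]}|d^\pi w|^2$ decays exponentially in $s$; hence so does $\|\zeta\|_{W^{1,2}(\{s\}\times[0,1])}$ with $\zeta=\pi\,\del w/\del\tau$, and, by Lemma~\ref{lem:exp-decay-lemma} applied to $\delbar\alpha=\tfrac12|\zeta|^2$, so does $\|\alpha\|_{L^2(\{s\}\times[0,1])}$, where $\alpha=(\lambda(\del w/\del t)-T)+\sqrt{-1}\,\lambda(\del w/\del\tau)$. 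Then I would apply the coercive Schauder-type estimates for the elliptic system \eqref{eq:main-eq-isothermal}--\eqref{eq:equation-for-alpha} (Theorems~\ref{thm:local-W12} and~\ref{thm:local-regularity}) to the shifted maps $w_s(\tau,t):=w(s+\tau,t)$ on a \emph{fixed} unit semi-disc, carrying the weight $e^{-\delta s}$ through the estimate. Since the coefficients of \eqref{eq:main-eq-isothermal} are controlled in terms of $\alpha$ and the uniform $C^0$-bound on $dw$ from Hypothesis~\ref{hypo:basic}, the alternating bootstrap used in the proof of Theorem~\ref{thm:local-regularity} yields constants $C,\delta>0$, independent of $s$, with $\|\zeta\|_{C^0}+\|\alpha\|_{C^0}\le Ce^{-\delta s}$ on the shifted box. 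In particular
\[
\Big|\frac{\del w}{\del\tau}(\tau,t)\Big|^2=|\zeta(\tau,t)|^2+\big(\text{\rm Im}\,\alpha(\tau,t)\big)^2\le 2C^2 e^{-2\delta\tau}
\]
for all $(\tau,t)\in[\tau_0,\infty)\times[0,1]$ and some $\tau_0$; this also recovers the qualitative decay asserted in Corollary~\ref{cor:tangent-convergence}.

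With this in hand, the lemma is an elementary length estimate. For $t\in[0,1]$ fixed and $\tau'\ge\tau\ge\tau_1\ge\tau_0$, the path $s\mapsto w(s,t)$, $s\in[\tau,\tau']$, joins $w(\tau,t)$ to $w(\tau',t)$ in $M$, so measuring its length in the triad metric $g=g_\xi+\lambda\otimes\lambda$ gives
\[
d\big(w(\tau,t),w(\tau',t)\big)\le\int_\tau^{\tau'}\Big|\frac{\del w}{\del s}(s,t)\Big|\,ds\le\int_{\tau_1}^{\infty}\sqrt{2}\,Ce^{-\delta s}\,ds=\frac{\sqrt{2}\,C}{\delta}\,e^{-\delta\tau_1}.
\]
Choosing $\tau_1$ so large that $\tfrac{\sqrt{2}\,C}{\delta}e^{-\delta\tau_1}<\epsilon$ finishes the proof.

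The only nontrivial point is the first step: converting the $L^2$/$W^{1,2}$-exponential decay of $(\zeta,\alpha)$ into a $C^0$-exponential decay \emph{uniformly in the translation parameter $s$}. This forces one to apply the coercive elliptic estimates of Sections~\ref{sec:contact-instantons}--\ref{sec:Wk+22-estimates} to the translated maps on domains of a fixed size and to track the factor $e^{-\delta s}$ through every stage of the bootstrap — the Riemann--Hilbert estimate for $\alpha$ with the real boundary condition and the interior/boundary Schauder estimate for $\zeta$ with the Legendrian boundary condition $\zeta(z)\in TR_i$. Everything after that is the routine length computation above. As usual, once this $t$-pointwise Cauchy estimate is available, combining it with the uniform $C^0$-bound on $\del w/\del t$ and with the subsequence convergence of Theorem~\ref{thm:subsequence} identifies $\lim_{\tau\to\infty}w(\tau,\cdot)$ with a single Reeb chord $\gamma(T\,\cdot)$ joining $R_0$ and $R_1$, which is precisely Proposition~\ref{prop:czero-convergence}.
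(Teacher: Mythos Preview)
Your approach is correct and is essentially what the paper has in mind (it simply refers the reader to \cite[Lemma 11.22]{oh-wang3}): take the $L^2$-exponential decay of $(\zeta,\alpha)$ supplied by Subsection~\ref{subsec:Reeb}, upgrade it to a pointwise exponential bound on $\partial w/\partial\tau$ by applying the local elliptic estimates of Sections~\ref{sec:contact-instantons}--\ref{sec:Wk+22-estimates} to the translated maps on boxes of fixed size, and then integrate in $\tau$.

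One small correction to your first sentence: the exponential decay of the tail $\pi$-energy does \emph{not} follow from the $\epsilon$-regularity estimate and the subsequence-convergence arguments of Section~\ref{sec:subsequence-convergence} alone --- those only give that the tail energy tends to zero, with no rate. The exponential rate requires the nondegeneracy of the asymptotic Reeb chord assumed at the beginning of Section~\ref{sec:exponential-convergence}, together with a three-interval/linearized analysis for the $\pi$-component $\zeta$; this is the input the paper records (somewhat elliptically) in the proof of Lemma~\ref{lem:exp-decay-lemma} as ``from previous section we have already established the $W^{1,2}$-exponential decay of $\nu=\tfrac12|\zeta|^2$,'' and which is carried out in detail in \cite{oh-wang3}. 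Once you cite the correct source for that decay, your bootstrap-and-integrate argument goes through exactly as written.
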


Then the following $C^0$-exponential convergence is also proved.
\begin{prop}[Compare with Proposition 11.23 \cite{oh-wang3}]
There exist some constants $C>0$, $\delta>0$ and $\tau_0$ large such that for any $\tau>\tau_0$,
\beastar
\|d\left( w(\tau, \cdot), z(\cdot) \right) \|_{C^0([0,1])} &\leq& C\, e^{-\delta \tau}
\eeastar
\end{prop}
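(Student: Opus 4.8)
The plan is to bootstrap the $L^2$-exponential estimates of the previous two subsections to a $C^0$-exponential decay of $dw - T\,R_\lambda\otimes dt$ on the strip-like end, and then to integrate $\partial_\tau w$ in $\tau$. This runs entirely parallel to the proof of \cite[Proposition 11.23]{oh-wang3}; the only item that needs monitoring is that the two linear boundary conditions $\zeta(\tau,i)\in TR_i$ and $\alpha(\tau,i)\in\R$ for $i=0,1$ survive every translation, differentiation and elliptic estimate we perform, which they do because the coercive estimates of Theorems \ref{thm:local-W12} and \ref{thm:local-regularity} were set up precisely with those boundary conditions.

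First I would assemble the slicewise $L^2$-decay. The $\epsilon$-regularity and interior density estimate (Theorem \ref{thm:density}) together with $E^\pi(w)<\infty$ force $|d^\pi w|\to0$, and the three-interval argument applied to the first-order elliptic system \eqref{eq:main-eq-isothermal} for $\zeta$ — whose zeroth-order coefficients are $\tfrac12(\operatorname{Re}\alpha+T)(\CL_{R_\lambda}J)$ and $\tfrac12(\operatorname{Im}\alpha)(\CL_{R_\lambda}J)J$, hence bounded — yields $\|\zeta(\tau,\cdot)\|_{W^{1,2}([0,1])}\le Ce^{-\delta\tau}$, as already recorded in the proof of Lemma \ref{lem:exp-decay-lemma} (see \cite[Theorem 10.11]{oh-wang3} for the abstract framework). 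Feeding $\nu=\tfrac12|\zeta|^2$ into Lemma \ref{lem:exp-decay-lemma}, applied to \eqref{eq:equation-for-alpha} with the real boundary condition, gives $\|\alpha(\tau,\cdot)\|_{L^2([0,1])}\le Ce^{-\delta\tau}$. Since $|dw-T\,R_\lambda\otimes dt|^2=2|\zeta|^2+|\alpha|^2$ pointwise (using $\delbar^\pi w=0$), this produces $\|dw-T\,R_\lambda\otimes dt\|_{L^2(\{\tau\}\times[0,1])}\le Ce^{-\delta\tau}$.

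Next I would upgrade this to $C^0$. For each large $\tau$, the translated map $w_\tau(s,t):=w(\tau+s,t)$ solves \eqref{eq:contacton-Legendrian-bdy-intro} on $[-1,1]\times[0,1]$ with the same Legendrian boundary data, and $dw_\tau-T\,R_\lambda\otimes dt$ is exponentially small in $L^2([-1,1]\times[0,1])$. Differentiating the fundamental equation \eqref{eq:fundamental} and the coupled system \eqref{eq:main-eq-isothermal}--\eqref{eq:equation-for-alpha}, the coefficient perturbations of the linearization at the constant Reeb-chord solution are bounded by $dw-T\,R_\lambda\otimes dt$ and its covariant derivatives, hence are exponentially small; applying the coercive estimates of Theorems \ref{thm:local-W12} and \ref{thm:local-regularity} (which already incorporate $\zeta\in TR_i$ and $\alpha\in\R$) on the nested strips $[-\tfrac12,\tfrac12]\times[0,1]\subset[-1,1]\times[0,1]$ and using Sobolev embedding, one obtains
\[
\big\|dw-T\,R_\lambda\otimes dt\big\|_{C^0([\tau-\frac12,\tau+\frac12]\times[0,1])}\le C\,e^{-\delta'\tau}
\]
for any $0<\delta'<\delta$, with the usual loss in the weight when passing from weighted $L^2$ to weighted $C^0$.

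Finally I would integrate. The display gives in particular $|\partial_s w(s,t)|\le C e^{-\delta' s}$ pointwise, so for $\tau'>\tau>\tau_0$,
\[
d\big(w(\tau,t),w(\tau',t)\big)\le\int_\tau^{\tau'}\big|\partial_s w(s,t)\big|\,ds\le\frac{C}{\delta'}\,e^{-\delta'\tau}.
\]
Letting $\tau'\to\infty$ and invoking Proposition \ref{prop:czero-convergence}, which gives $w(\tau',t)\to z(t)$, and taking the supremum over $t\in[0,1]$, yields $\|d(w(\tau,\cdot),z(\cdot))\|_{C^0([0,1])}\le\frac{C}{\delta'}e^{-\delta'\tau}$, which is the assertion after renaming $\delta'$. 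The genuine work — and the only place any care beyond \cite[Sections 10--11]{oh-wang3} is needed — is the weighted inductive elliptic estimate for the coupled pair $(\zeta,\alpha)$ in the third paragraph: one must propagate the exponential weight through the three-interval scheme uniformly in $\tau$ while respecting the Riemann--Hilbert-type boundary conditions. Since those boundary conditions are exactly the ones under which the coercive estimates of Sections \ref{sec:contact-instantons}--\ref{sec:Wk+22-estimates} were established, I expect this to be a routine adaptation rather than a new difficulty.
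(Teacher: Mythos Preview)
Your proposal is correct and follows precisely the route the paper intends: the paper does not give a proof of this proposition at all, but simply records it with the tag ``Compare with Proposition 11.23 \cite{oh-wang3}'' and then in Section \ref{subsec:Cinftydecaydu} summarizes the scheme as ``$L^2$-exponential estimates of the full derivative $dw$'' followed by ``the bootstrapping argument using the local uniform a priori estimates on the cylindrical region''. Your three-step outline (slicewise $L^2$-decay of $(\zeta,\alpha)$ via the three-interval method and Lemma \ref{lem:exp-decay-lemma}; upgrade to $C^0$-decay of $dw-T\,R_\lambda\otimes dt$ by translating and applying Theorems \ref{thm:local-W12}--\ref{thm:local-regularity}; integrate $\partial_\tau w$) is exactly that scheme, and your observation that the Legendrian boundary conditions are already built into those coercive estimates is the only point requiring comment beyond \cite{oh-wang3}.
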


\subsection{$C^\infty$-exponential decay of $dw - R_\lambda(w) \, d\tau$}
\label{subsec:Cinftydecaydu}

 So far, we have established the following:
\begin{itemize}
\item $W^{1,2}$-exponential decay of $w$,
\item $C^0$-exponential convergence of $w(\tau,\cdot) \to z(\cdot)$ as $\tau \to \infty$
for some Reeb chord $z$ between two Legendrians $R, \, R'$.
\end{itemize}

Now we are ready to complete the proof of
$C^\infty$-exponential convergence $w(\tau,\cdot) \to z$
by establishing the $C^\infty$-exponential decay of $dw - R_\lambda(w)\, dt$.
The proof of the latter decay is now in order which will be
carried out by the bootstrapping arguments
applied to the system \eqref{eq:contacton-Legendrian-bdy-intro}.

Combining the above three, we have obtained $L^2$-exponential estimates of the full derivative $dw$.
By the bootstrapping argument using the local uniform a priori estimates
on the cylindrical region,
we obtain higher order $C^{k,\alpha}$-exponential decays of the term
$$
\frac{\del w}{\del t} - T R_\lambda(z), \quad \frac{\del w}{\del\tau}
$$
for all $k\geq 0$, where $w(\tau,\cdot)$ converges to $z$ as $\tau \to \infty$ in $C^0$ sense.
This, combined with local elliptic $C^{k,\alpha}$-estimates
given in Theorem \ref{thm:local-regularity}, then completes proof of $C^\infty$-convergence of
$w(\tau,\cdot) \to z$ as $\tau \to \infty$.

\section{The case with one-jet bundles and future works}
\label{sec:future-works}

In the case of one-jet bundles $J^1B$, it is shown in \cite{oh-yso} that the contact Hamilton's equation
$\dot x = X_H(t,x)$ is the critical point equation of the \emph{reduced action functional}
$$
\widetilde \CA_H(\gamma) = \int (\pi \circ \gamma)^*\theta - H(t,\gamma(t))\, dt
$$
defined on the path space
\beastar
&{}& \Omega_0(T^*B,0_{T^*B}) \\
& = & \left\{ \gamma: [0,1] \to T^*B \mid \gamma(0) \in 0_{T^*B}, \, \gamma(1)\,\in Z,
 z(\gamma(t)) = -\int_{[0,t]} \gamma^*(\vec E[H] - H)\right \}
\eeastar

Let $J$ be an $\omega_0$-compatible almost complex structure on $T^*B$ with $\omega_0 = -d\theta$.
This induces a natural $\lambda$-adapted CR-almost complex structure
on $J^1B$ in the sense of \cite{oh-wang2} by pulling it back to
$\xi$ by the isomorphism $\xi \to T(T^*B)$ induced by the restriction to $\xi$ of
the projection $d\pi: T(J^1B) \to T(T^*B)$. Since this class of $CR$-almost complex structures
on $J^1B$ will play an important role for the construction of Legendrian spectral invariants in \cite{oh-yso},
we assign a name to them.

\begin{defn}[Lifted $CR$-almost complex structures] We call a $CR$-almost complex
structure on $J^1B$ a \emph{$T^*B$-lift} if it is lifted to $\xi$ by $d\pi$ above from
$\omega_0$-compatible almost complex structure $J$ on for $\omega_0 = -d\theta$ on $T^*B$.
We denote the associated $CR$-almost complex structure by the same letter $J$ by an abuse of notation.
\end{defn}

Let us first express such a CR-almost complex structure $J$ on $J^1B$
in terms of the coordinate $w = (u,f)$ where $u = \pi \circ w$ and $f = z \circ w$,
we can express $dw = Du + df$ where $Du$ is $\xi$-valued one-form and $df$ is
$\R \langle \frac{\del}{\del z}\rangle$-valued one form respectively.
More specifically we have
$$
Du = (du)^\sharp: T\Sigma \to \xi
$$
is the horizontal lifting of $du \in \Omega^1(u^*T(T^*B))$ to one in $\Omega^1(u^*\xi)$
which induced by the map
$$
\frac{\del}{\del q_i} \to \frac{D}{\del q_i}, \quad \frac{\del}{\del p_i} \to  \frac{\del}{\del p_i}.
$$
Then we have the decomposition
$$
Du = (Du)^{(1,0)} + (Du)^{(0,1)}
$$
with the complex linear and the anti-complex linear part of $Du:(T\Sigma, j) \to(\xi,J)$.
By definition, we have
$$
\delbar^\pi w = (Du)^{(0,1)}, \quad \del^\pi w = (Du)^{(1,0)} =: \delbar^D u.
$$

Let $J$ be a $T^*B$-lifted CR almost complex structure on $J^1B$ and consider the case $\dot \Sigma = \R \times [0,1]$.
In terms of the splitting $w = (u,f)$, we can write \eqref{eq:contacton-Legendrian-bdy-intro} into
$$
\begin{cases}
\delbar^D u-( X_H^\pi \otimes dt)^{(0,1)} = 0,\\
d\left(e^{g_H(w)}(w^*\lambda + w^*H_t \, dt)\circ j\right )= 0,\\
u(\tau,0), \quad u(\tau,1)\in 0_{T^*B}
\end{cases}
$$
with $g_H(w)(z) = g_{(\psi_H^t)^{-1}}(w(z))$ when the pair $(R_0,R_1) = (0_{J^1B},0_{J^1B})$ is considered.
We would like to emphasize that such a splitting $w = (u,f)$
 does not exist on general contact manifolds which is a special feature of
one-jet bundle.

For a $T^*B$-lifted CR-almost complex structure $J$, we can rewrite
into the unperturbed equation in terms of $(v,g)$ similarly as in the cotangent bundle case \cite{oh:jdg,oh:cag}:
\be\label{eq:contacton-in-(v,g)}
\begin{cases}
\delbar^D v = 0, \, d(dg\circ j) = d(v^*\theta \circ j)\\
(v(\tau,0),g(\tau,0)) \in R, \quad v(\tau,1)\in 0_{T^*B}.
\end{cases}
\ee
Using this equation, we will develop a Floer theoretic construction of Legendrian spectral invariants and
their applications in a sequel \cite{oh-yso}.

\appendix

\section{Generic transversality of contact Hamiltonian chords}

We start with the definition

\begin{defn} Let $X_t$ be a contact
vector field and $H = -\lambda(X_t)$ be its contact Hamiltonian. We call a trajectory
$\gamma$ thereof \emph{transversal to $\xi$} if $\lambda(\dot \gamma) \neq 0$.
\end{defn}

\begin{rem}\label{rem:transveral-trajectories}
\begin{enumerate}
\item
Note that any Reeb vector field which corresponds to $H = 1$ is transversal to $\xi$
and the set of transversal trajectories for a given contact vector field are open.
\item However not every contact Hamiltonian trajectory is transversal to $\xi$.
Note that for a general hypersurface, it carries a codimension one hypersurface
along which the contact vector field is tangent to $\xi$ which is
called the dividing set of the hypersurface.
\end{enumerate}
\end{rem}

Now let us given a pair of Legendrian submanifolds $(R_0,R_1)$.
In the rest of the present section, we show that for generic choice of $H$ all Hamiltonian chords
satisfying the \emph{Legendrian boundary condition} for $(R_0,R_1)$ are transversal.

\begin{thm}\label{thm:transversality}
Let $R_0, \, R_1$ be a Legendrian pair.
Then there exists a dense subset of time-dependent function $H = H(t,x) \in C^\infty(\R \times M)$
 as follows:
\begin{itemize}
\item There is a discrete set of times $t_i \in \R$ satisfying $\psi_H^{t_i} \cap R_1 \neq \emptyset$.
\item The associated Hamiltonian chords from $R_0$ to $R_1$ are nondegenerate and transversal to $\xi$.
\end{itemize}
We call any such $H$ a $(R_0,R_1)$-transversal Hamiltonian.
\end{thm}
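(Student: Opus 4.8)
The plan is to run a standard parametric transversality (Sard--Smale) argument over the space of Hamiltonians, together with one extra step that forces transversality to $\xi$.

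First I would fix a Banach manifold $\CH$ of time-dependent Hamiltonians (e.g.\ the $C^\infty$ ones, handled through the usual approximation by the $C^\ell$ ones with $\ell$ large), and form the evaluation map
\[
\ev\colon R_0\times\R\times\CH\longrightarrow M,\qquad \ev(p,t,H)=\psi_H^t(p),
\]
so that $\ev(p,t,H)\in R_1$ says exactly that $\gamma\colon s\mapsto\psi_H^s(p)$ is a Hamiltonian chord from $R_0$ to $R_1$ with $\gamma(t)\in R_1$. Set $\widetilde{\CM}:=\ev^{-1}(R_1)$. The key point is that $\ev$ is a submersion already in the $\CH$-directions at every point of $\widetilde{\CM}$: since $H\mapsto X_H$ is \emph{linear} (see \eqref{eq:XH}), one has $\tfrac{\partial}{\partial\varepsilon}\big|_{0}\psi_{H+\varepsilon K}^{t}(p)=\int_0^t d\psi_H^{t-s}\big(X_{K_s}(\gamma(s))\big)\,ds$ with $K_s:=K(s,\cdot)$, and \eqref{eq:XH} also shows that $X_f(x)$ realizes every vector of $T_xM$ as $f$ ranges over functions with a prescribed $1$-jet at $x$; so, taking $K$ supported in a small spacetime ball about an interior point $(s_0,\gamma(s_0))$, the variation $\tfrac{\partial}{\partial\varepsilon}\big|_{0}\psi_{H+\varepsilon K}^{t}(p)$ can be made an arbitrary prescribed vector of $T_{\gamma(t)}M$ up to small error. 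Hence $\widetilde{\CM}$ is a Banach manifold and the projection $\Pi\colon\widetilde{\CM}\to\CH$ is Fredholm of index $\dim(R_0\times\R)-\operatorname{codim}_M R_1=(n+1)-(n+1)=0$, where $\dim M=2n+1$. By Sard--Smale the regular values of $\Pi$ form a residual, hence dense, subset of $\CH$, and the passage back to $C^\infty$ is routine.

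For $H$ a regular value of $\Pi$, the fiber $\Pi^{-1}(H)=\{(p,t):\psi_H^t(p)\in R_1\}$ is a $0$-dimensional manifold, so its points $(p_i,t_i)$ are isolated. Isolatedness in the $t$-coordinate is exactly the discreteness of the set of times $t_i$ with $\psi_H^{t_i}(R_0)\cap R_1\neq\emptyset$, while $H$ being a regular value is the nondegeneracy of the corresponding chords $\gamma_i$ (that is, $d\psi_H^{t_i}(T_{p_i}R_0)\oplus\R\langle X_H(t_i,\gamma_i(t_i))\rangle$ meets $T_{\gamma_i(t_i)}R_1$ transversally). This gives the first bullet and the nondegeneracy part of the second bullet.

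It remains to make every chord transverse to $\xi$ everywhere. By \eqref{eq:contact-Hamiltonian}, $\lambda(\dot\gamma(s))=\lambda(X_H(s,\gamma(s)))=-H(s,\gamma(s))$, so $\gamma$ fails to be transverse to $\xi$ at a time $s$ precisely when $H(s,\gamma(s))=0$. Starting from an $H$ in the residual set above, I would remove the (generically isolated) zeros of $s\mapsto H(s,\gamma_i(s))$ by an arbitrarily $C^\infty$-small further perturbation carried out inductively over an exhaustion of the time axis by intervals $[0,N]$: on each $[0,N]$ there are only finitely many chords, each nondegenerate and hence stable under small perturbations of $H$, and one lifts $H$ away from $0$ along each of them by a perturbation supported in tiny spacetime balls about its zeros and away from $R_0\cup R_1$; taking the $N$-th perturbation of size $O(2^{-N})$, the resulting $H'$ is as close to $H$ as desired, still has nondegenerate chords, and now has every chord everywhere transverse to $\xi$. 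I expect \emph{this last step to be the main obstacle}: it is not formal, since enlarging the universal moduli space by a parameter $s\in[0,t]$ and the scalar equation $H(s,\psi_H^s(p))=0$ only makes the ``bad'' locus $0$-dimensional rather than empty by the naive index count, so the real content is the inductive clearing-out together with the bookkeeping that prevents the successive perturbations from creating new chords on $[0,N]$ or disturbing the already-cleared ones. The parametric transversality behind the first three paragraphs, by contrast, is standard once the linearity of $H\mapsto X_H$ and the pointwise surjectivity coming from \eqref{eq:XH} are noted.
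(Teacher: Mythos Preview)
Your two-step outline---first Sard--Smale for nondegeneracy of chords, then a hands-on perturbation to force transversality to $\xi$---is \emph{not} the route the paper takes. The paper folds the tangency condition $H(t_0,\gamma(t_0))=0$ directly into one universal map
\[
\Phi(\gamma,H,t_0,t_1)=\bigl(\dot\gamma-X_H(\gamma),\,H(t_0,\gamma(t_0)),\,\gamma(t_1),\,\gamma(0)\bigr)
\]
with values in $\mathfrak X\times\R\times M\times M$, proves transversality of $\Phi$ to $0_{\mathfrak X}\times\{0\}\times R_1\times R_0$, and then asserts that the projection to $\CH$ has Fredholm index $-1$, so that the bad locus is generically empty. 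In other words, the paper does in a single stroke exactly the parametric setup you describe in your last paragraph and dismiss as giving only index $0$. Your dimension count is the honest one: adding the free parameter $t_0$ and the scalar constraint $H(t_0,\gamma(t_0))=0$ changes the index by $+1-1=0$, not $-1$; the paper's claim of $-1$ does not survive a careful tally with the domain it writes down.

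That said, your own Step 2 has a genuine gap, and it is the one you flagged. A \emph{simple} zero $t_0$ of $s\mapsto H(s,\gamma(s))$ along a nondegenerate chord $\gamma$ is stable under $C^1$-small perturbations of $H$: the perturbed Hamiltonian $H'$ has a nearby nondegenerate chord $\gamma'$, and $s\mapsto H'(s,\gamma'(s))$ is $C^1$-close to $s\mapsto H(s,\gamma(s))$, so by the implicit function theorem it still vanishes near $t_0$. Supporting the perturbation in a tiny spacetime ball about $(t_0,\gamma(t_0))$ does not help---it moves the chord through that same ball and the transversal zero slides rather than disappears. So ``lifting $H$ away from $0$ along each chord by a small localized bump'' cannot work as stated; the zeros are robust, and there is no evident codimension to spare. (Concretely, if $H|_{R_0}<0$ and $H|_{R_1}>0$ pointwise, the intermediate value theorem forces every chord of every $C^0$-nearby $H'$ to be tangent to $\xi$ somewhere.) What your argument does correctly is the nondegeneracy/discreteness of chords; the transversality-to-$\xi$ part remains open in both your write-up and the paper's.
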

\begin{proof}

We write $\CH = C^\infty([0,1] \times M)$.
Define a vector bundle $\mathfrak X \to \Omega = \Omega(M;R,R_1)$ by
$$
\mathfrak X = \bigsqcup_{\gamma \in \Omega} \{\gamma\} \times \Gamma(\gamma^*TM)
$$
and write $\mathfrak X_\gamma: = \Gamma(\gamma^*TM)$.

We first note that if $\dot \gamma = X_H(\gamma)$, then
$\lambda(\dot \gamma(t)) = -H(t, \gamma(t))$ and hence $\dot\gamma \in \xi$
is equivalent to $H(t,\gamma(t)) \neq 0$.
Motivated by this, we consider the map
$$
\Phi:\Omega_0(M;R_0) \times \CH \times \R^2 \to \mathfrak X \times \R \times M \times M
$$
defined by
\be\label{eq:Phi}
\Phi(\gamma, H,t_0,t_1) = \left(\dot \gamma - X_H(\gamma), H(t,\gamma(t_0)), \gamma(t_1), \gamma(0)\right)
\ee
\begin{lem} The map $\Phi$ is transversal to the submanifold
$$
0_{\mathfrak X} \times \{0\} \times R_1 \times R_0.
$$
\end{lem}
\begin{proof} The scheme of the proof used to prove this kind of statement is
standard. (See \cite[Section 3]{oh:diagram} for example.) We just mention the reason how
the Hamiltonian chords can be made transversal.
For this purpose, it is enough to show that there is no pair $(t_0,t_1)$ with $0 \leq t_0 \leq t_1$
that satisfy
$$
\begin{cases}
\dot \gamma(t_0) - X_H(\gamma(t_0)) = 0, \quad H(t_0,\gamma(t_0)) = 0\\
\gamma(0) \in R_0, \, \gamma(t_1) \in R_1.
\end{cases}
$$
We have the linearization of $\Phi$ at $(\gamma, H, t_0,t_1) \in \Phi^{-1}(0_{\mathfrak X},0,R_1, R_0)$
$$
D\Phi(\gamma, H, t_0,t_1)(\zeta,g,a_0,a_1)
= \left(\nabla_t\zeta - DX_{H_1}(\gamma(t))(\zeta), g(t_0,\gamma(t_0)), \zeta(a_1),\zeta(0)\right).
$$
From this explicit expression, it is easy to show that $D\Phi(\gamma, H, t_0,t_1)$ is surjective
after taking a suitable completion of relevant functions spaces which is standard and so omit.
(See \cite[Section 3]{oh:diagram} for such details in a similar context for symplectic Hamiltonian
trajectories.)
\end{proof}

Now we combine this generic transversality statement with  a dimension
counting argument as follows.
Thanks to $\dim R_0 + \dim R_1 = \dim M-1$, the index of
the projection map
$$
\Pi: \Phi^{-1}(0_{\mathfrak X} \times \{0\} \times R_1) \to \CH
$$
has index $-1$ and hence $\Pi^{-1}(H) = \emptyset$ for the regular values $H$ of $\Pi$, i.e.,
all chords are transversal to $\xi$.

This finishes the proof of the theorem.
\end{proof}

\end{document}